\newcommand{\NNN}[1]{\left|\!\left|\!\left|#1\right|\!\right|\!\right|}
\newcommand{\BB}{\mathcal{B}}
\newcommand{\T}{\mathcal{T}}
\newcommand{\I}{\mathcal{I}}
\newcommand{\E}{\mathcal{E}}
\newcommand{\eps}{\epsilon}
\newcommand{\F}{\mathsf{F}_{\eps}}
\newcommand{\jl}{[\![}
\newcommand{\jr}{]\!]}
\newcommand{\jmp}[1]{\jl#1\jr}
\newcommand{\al}{\langle\!\langle}
\newcommand{\ar}{\rangle\!\rangle}
\newcommand{\avg}[1]{\al#1\ar}
\newcommand{\dd}{\mathsf{d}}
\newcommand{\dx}{\,\dd\bm x}
\newcommand{\NN}[1]{\left\|#1\right\|}
\newcommand{\DG}{\text{\tiny\sf{DG}}}
\newcommand{\PF}{\text{\tiny\sf{PF}}}
\newcommand{\NF}{\mathsf{NF}}
\newcommand{\uDG}{u^{\DG}}
\newcommand{\utDG}{\widehat{\mathfrak{u}}^{\DG}}
\renewcommand{\a}{a_{\DG}}
\renewcommand{\l}{\ell_{\DG}}
\newcommand{\nT}{\nabla_{\T}}
\newcommand{\ds}{\,\dd s}
\newcommand{\V}{\mathcal{V}_{\DG}}
\newcommand{\W}{\mathcal{W}_{\DG}}
\newcommand{\R}{\mathsf{R}_{\eps}}
\newcommand{\dprod}[1]{\left\langle#1\right\rangle}
\newcommand{\Cp}{C_\eqref{eq:equiv}}
\newcommand{\CP}{C_{\PF}}
\newcommand{\ut}{\widehat{\mathfrak{u}}}
\newcommand{\f}[1]{\widehat{\mathfrak{f}}(#1)}
\newcommand{\LL}{\mathsf{L}}
\newcommand{\A}{\mathsf{A}_{\DG}}
\renewcommand{\P}{\mathsf{P}_{\DG}}
\newcommand{\Ihp}{\mathsf{I}_{hp}}
\newcommand{\id}{\mathsf{id}}
\newtheorem{theorem}{Theorem}[section]
\newtheorem{lemma}[theorem]{Lemma}
\newtheorem{proposition}[theorem]{Proposition}
\theoremstyle{definition}
\newtheorem{example}[theorem]{Example}
\newtheorem{algorithm}[theorem]{Algorithm}
\newtheorem{remark}[theorem]{Remark}
\title[$hp$--Adaptive NDG Methods for Semilinear PDE]{An $hp$--Adaptive Newton-Discontinuous-Galerkin Finite Element Approach for Semilinear Elliptic Boundary Value Problems}
\author[P.~Houston]{Paul Houston}
\address{
School of Mathematical Sciences, University of Nottingham,
University Park, Nottingham, NG7 2RD, UK}
\email{Paul.Houston@nottingham.ac.uk}
\author[T.~P.~Wihler]{Thomas P.~Wihler}
\address{Mathematics Institute, University of Bern, CH-3012 Bern, Switzerland}
\email{wihler@math.unibe.ch}
\thanks{TW acknowledges the support of the Swiss National Science Foundation (SNF), Grant No.~200021-162990}
\begin{document}

\begin{abstract}
In this paper we develop an $hp$--adaptive procedure for the numerical solution of general second-order semilinear elliptic boundary value problems, with possible singular perturbation. Our approach combines both adaptive Newton schemes and an $hp$--version adaptive discontinuous Galerkin finite element discretisation, which, in turn, is based on a robust $hp$--version \emph{a posteriori} residual analysis. Numerical experiments underline the robustness and reliability of the proposed approach for various examples.
\end{abstract}

\keywords{Newton method, semilinear elliptic problems, adaptive finite element methods, discontinuous Galerkin methods, $hp$--adaptivity.}

\subjclass[2010]{65N30}

\maketitle

\section{Introduction}
The subject of this paper is the adaptive numerical approximation of second-order
semilinear elliptic problems of the form
%: find $u:\Omega\rightarrow{\mathbb R}$ such that
\begin{equation}\label{eq:PDE}
\begin{aligned}
-\eps\Delta u+u &=f(\cdot,u) \text{ in } \ \Omega,\qquad
u=0 \text{ on } \partial \Omega. 
\end{aligned}
\end{equation}
Here, $\Omega\subset\mathbb{R}^2$ is an open and bounded Lipschitz domain, $\epsilon\in(0,1]$ represents a (possibly small singular perturbation) parameter, $f:\,\overline\Omega\times\mathbb{R}\to\mathbb{R}$ is a continuously differentiable function, and~$u:\,\Omega\to\mathbb{R}$ is an unknown solution; in the sequel, we will omit to explicitly express the dependence of~$f$ on the first argument, and simply write~$f(u)$ instead. Problems of this type appear in a wide range of application areas of practical interest, such as, for example, nonlinear reaction-diffusion in ecology and chemical models~\cite{CantrellCosner:03,Edelstein-Keshet:05,BorisyukErmentroutFriedmanTerman:05,Ni:11,OkuboLevin:01}, economy~\cite{BarlesBurdeau:95}, or classical and quantum physics~\cite{BerestyckiLions:83,GibbonJamesMoroz:79,BaroneEspositoMageeScott:71,Strauss:77}. 

Partial differential equations (PDEs) of the form~\eqref{eq:PDE} may admit
a unique solution, no solution at all, or more typically a multitude of solutions, or indeed
infinitely many such solutions. Moreover, in the singularly perturbed case, i.e., when
$0< \epsilon \ll 1$, solutions of \eqref{eq:PDE}, when they exist, may contain 
sharp layers in the form of interior/boundary layers, or isolated spike--like solutions,
and their numerical approximation represents a challenging computational task.
Indeed, to efficiently and reliably compute discrete approximations to 
the analytical solution~$u$ of~\eqref{eq:PDE}, 
it is essential to exploit {\em a posteriori} bounds which
not only provide information regarding the size of the discretisation error, measured in some
appropriate norm, but also yield local error indicators which may subsequently be employed
to enrich the underlying approximation space in an adaptive manner. 
Of course, a key aspect of this general
solution procedure is the design and implementation of a nonlinear solver
which can efficiently compute the approximation $u_h$ to $u$; we shall return
to this issue below.

In general, the traditional approach exploited within the literature for the
design of adaptive finite element methods, for example, is to first discretise
the underlying PDE problem, in our case \eqref{eq:PDE}, 
and to derive an {\em a posteriori} error bound for the resulting (nonlinear) scheme;
this is typically a very mathematically challenging task. However,
once such a bound has been established, then given a suitable initial
mesh and polynomial approximation order, the underlying nonlinear system of discrete 
equations arising from the underlying finite element discretisation may 
be solved based on employing, for example, a (damped) Newton iteration. 
Denoting this computed numerical approximation by $u_h$, the
size of the error between $u$ and $u_h$ may then
be estimated by exploiting this {\em a posteriori}
error bound. If this bound is below a given user tolerance, then sufficient accuracy has 
been attained and the adaptive algorithm may be terminated. Otherwise, the
computational mesh ($h$--refinement) or the polynomial degree ($p$--refinement),
or both ($hp$--refinement) are locally enriched based on identifying regions in
the domain where the elementwise error indicators, which stem from the
{\em a posteriori} error bound, are locally large. On the basis of this new
finite element space, a new approximation $u_h$ to $u$ may be computed, and the whole
process repeated until either the desired accuracy has been attained, or a maximum
number of refinement steps have been completed.

Stimulated by the work undertaken in the recent article \cite{AmreinWihler:15},
we consider an alternative approach based on the 
so-called adaptive Newton-Galerkin paradigm for the numerical
approximation of nonlinear problems of the type~\eqref{eq:PDE}. 
More precisely, this general technique is based on applying local 
Newton-type linearisations on the continuous level that allow for the 
approximation of the semilinear PDE~\eqref{eq:PDE} by a sequence of linearised problems. 
These resulting \emph{linear} PDEs are then discretised by means of an adaptive finite 
element procedure, which, in turn, is based on a suitable \emph{a posteriori} 
residual analysis. The adaptive Newton-Galerkin procedure provides an 
\emph{interplay} between the (adaptive, or damped) Newton method and 
the adaptive finite element approach, whereby we either perform a Newton step 
(if the Newton linearisation effect dominates) or enrich the current finite element space based on the above {\em a posteriori} residual indicators (in the case that the finite element discretisation constitutes the main source of error); for related work we refer to~\cite{ChaillouSuri:07,El-AlaouiErnVohralik:11}, or the articles~\cite{BernardiDakroubMansourSayah:15,CongreveWihler:15,GarauMorinZuppa:11} on (derivative-free) fixed-point iteration schemes. Finally, we point to the works~\cite{ChaillouSuri:06,Han:94} dealing with modelling errors in linearised models.

In the current article, we extend the work undertaken in~\cite{AmreinWihler:15} to the framework of $hp$--version adaptive interior penalty discontinuous Galerkin (DG) schemes, thereby giving rise to $hp$--adaptive Newton-discontinuous Galerkin (NDG) methods. Here, the proof of
the resulting {\em a posteriori} residual bound for the interior penalty DG
discretisation of the underlying linearised PDE problem is based on two key steps:
firstly, we introduce a suitable residual operator on a given
enriched space, which, when measured in an appropriate norm, is equivalent 
to the error measured in terms of the underlying DG energy norm. Secondly, an upper
bound on the norm of the residual operator is derived based on 
exploiting the general techniques developed in the
articles~\cite{HoustonSchotzauWihler:07,HoustonSchotzauWihler:06,Zhu_3D};
we also refer to \cite{SchotzauZhu:11} for the application to
convection--diffusion problems, and to~\cite{HoustonSuliWihler:08,CongreveHoustonSuliWihler:13} for the treatment of strongly monotone
quasilinear PDEs, cf., also,
\cite{CongreveHoustonWihler:13,Congreve_Houston_2014} for $hp$--version 
two-grid DG methods.
The proof of this upper bound crucially relies on the approximation of
discontinuous finite element functions by conforming ones,
cf., also, \cite{KarakashianPascal03} for the $h$--version case.
Moreover, in the current setting, following \cite{Verfurth:98}, particular
care is devoted to the derivation of $\epsilon$-robust approximation estimates.
The resulting {\em a posteriori} bound
consists of two key terms: one stemming from the Newton linearisation error,
and the second which measures the approximation error in the underlying DG
scheme. On the basis of this general $hp$--version bound, we devise 
a fully automatic $hp$--adaptive NDG scheme for the numerical approximation
of PDEs of the form~\eqref{eq:PDE}. Indeed, the performance of the
resulting adaptive strategy is demonstrated on both the Bratu and
Ginzburg Landau problems; moreover, the superiority of exploiting $hp$--enrichment
of the DG finite element space, in comparison with standard mesh adaptation ($h$--refinement), will be highlighted.

The structure of this article is as follows. In Section~\ref{sec:newton_linearisation}
we briefly outline the adaptive (damped) Newton linearisation
procedure employed within this article. The $hp$--version interior penalty
DG discretisation of the resulting linearised PDE problem is then given in
Section~\ref{sc:dgfem}. Section~\ref{sc:apost} is devoted to the derivation
of a residual-based {\em a posteriori} bound. On the basis of this bound in Section~\ref{sec:adaptivity} we design a suitable adaptive refinement strategy, which controls both the
error arising in the Newton linearisation, as well as the error in the $hp$--DG
finite element scheme; in the latter case, we exploit automatic $hp$--refinement
of the underlying finite element space. The performance of this
proposed algorithm is demonstrated for a series of numerical examples
presented in Section~\ref{sec:numerics}. Finally, in Section~\ref{sec:conlusions} we summarise
the work presented in this article and discuss potential future extensions.

%%%%%%%%%%%%%%%%%%%%%%%%%%%%%%%%%%%%%%%%%%%%%%%%%%%%%%%%%%%%%%%

\section{Newton Linearisation} \label{sec:newton_linearisation}

\subsection{An Adaptive Newton Approach}

We will briefly revisit an adaptive `black-box' prediction-type Newton algorithm from~\cite{AmreinWihler:15}, and refer to~\cite{Deuflhard:04} for more sophisticated approaches in more specific situations. Let us consider two Banach spaces $ X, Y $, with norms~$\|\cdot\|_X$ and~$\|\cdot\|_Y$, respectively. Then, given an open subset~$\Xi\subset X$, and a (possibly nonlinear) operator~$\F:\,\Xi\to Y$, we are interested in solving the {\em nonlinear} operator equation
\begin{equation}\label{eq:F=0}
\F(u)=0,
\end{equation} 
for some unknown zeros~$u\in\Xi$. Supposing that the Fr\'echet derivative~$\F'$ of~$\F$ exists in~$\Xi$ (or in a suitable subset), the classical Newton  method for solving~\eqref{eq:F=0} starts from an initial guess~$u_0\in\Xi$, and generates a sequence~$\{u_n\}_{n\ge 1}\subset X$ that is defined iteratively by the {\em linear} equation
\begin{equation}\label{eq:newton}
\F'(u_n)(u_{n+1}-u_n)=-\F(u_n),\qquad n\ge 0.
\end{equation}
Naturally, for this iteration to be well-defined, we need to assume that~$\F'(u_n)$ is invertible for all~$n\ge 0$, and that~$\{u_n\}_{n\ge 0}\subset\Xi$. 

In order to improve the reliability of the Newton method~\eqref{eq:newton} in the case that the initial guess~$u_0$ is relatively far away from a root $u_{\infty}\in\Xi$ of~$\F$, $\F(u_\infty)=0$, introducing some damping in the Newton method is a well-known remedy. In that case~\eqref{eq:newton} is rewritten as
\begin{equation}\label{eq:damped}
u_{n+1}=u_n-\Delta t_n\F'(u_n)^{-1}\F(u_n),\qquad n\ge 0,
\end{equation} 
where $\Delta t_n>0$, $n\ge 0$, is a damping parameter that may be adjusted {\em adaptively} in each iteration step. The selection of the Newton parameter~$\Delta t_n$ is based on the following idea from~\cite{AmreinWihler:15}: provided that~$\F'(u)$ is invertible on a suitable subset of~$\Xi\subset X$, we define the {\em Newton-Raphson transform} by
\[
u\mapsto\NF(u):=-\F'(u)^{-1}\F(u);
\]
see, e.g., \cite{SchneebeliWihler:11}. Then, rearranging terms in~\eqref{eq:damped}, we notice that
\begin{equation*}
\frac{u_{n+1}-u_{n}}{\Delta t_n}=\NF(u_n), \qquad n\ge 0,
\end{equation*}
i.e., \eqref{eq:damped} can be seen as the discretisation of the dynamical system
\begin{equation}\label{eq:davy}
\begin{split}
\dot{u}(t)&=\NF(u(t)), \quad t\geq 0,\qquad
 u(0)=u_0,
 \end{split}
\end{equation}
by the forward Euler scheme, with step size~$\Delta t_n>0$. 
For~$t\in[0,\infty)$, the solution~$u(t)$ of~\eqref{eq:davy}, if it exists, defines a trajectory in~$X$ that starts at~$u_0$, and that will potentially converge to a zero of~$\F$ as~$t\to\infty$. Indeed, this can be seen (formally) from the integral form of~\eqref{eq:davy}, that is,
\begin{equation*}
\F(u(t))=\F(u_0)e^{-t},\qquad t\ge 0,
\end{equation*}
which implies that~$\F(u(t))\to 0$ as~$t\to\infty$.

Now taking the view of dynamical systems, our goal is to compute an upper bound for the value of the step sizes~$\Delta t_n>0$ from~\eqref{eq:damped}, $n\ge 0$, so that the discrete forward Euler solution~$\{u_n\}_{n\ge 0}$ from~\eqref{eq:damped} stays reasonably close to the continuous solution of~\eqref{eq:davy}. Specifically, for a prescribed tolerance~$\tau>0$, a Taylor expansion analysis (see~\cite[Section~2]{AmreinWihler:15} for details) reveals that 
\[
u(t)=u_0+t\NF(u_0)+\frac{t^2}{2h}\eta_h+\mathcal{O}(t^3)+\mathcal{O}(t^2h\|\NF(u_0)\|_X^2),
\]
where, for any sufficiently small~$h>0$, we let~$\eta_h=\NF(u_0+h\NF(u_0))-\NF(u_0)$. Hence, after the first time step of length~$\Delta t_0>0$ there holds
\begin{equation}\label{eq:O}
u(\Delta t_0)-u_1=\frac{\Delta t_0^2}{2h}\eta_h+\mathcal{O}(\Delta t_0^3)+\mathcal{O}(\Delta t_0^2h\|\NF(u_0)\|_X^2),
\end{equation}
where~$u_1$ is the forward Euler solution from~\eqref{eq:damped}. Therefore, upon setting
\[
\Delta t_0=\sqrt{2\tau h\|\eta_h\|_X^{-1}},
\] 
we arrive at
\[
\|u(\Delta t_0)-u_1\|_X\le\tau+\mathcal{O}(\Delta t_0^3)+\mathcal{O}(\Delta t_0^2h\|\NF(u_0)\|_X^2).
\]
In order to balance the~$\mathcal{O}$-terms in~\eqref{eq:O} it is sensible to make the choice
\[
h=\mathcal{O}(\Delta t_0\|\NF(u_0)\|_X^{-2}),
\] 
i.e.,
\begin{equation}\label{eq:h}
h=\gamma \Delta t_0\|\NF(u_0)\|_X^{-2},
\end{equation}
for some parameter~$\gamma>0$. This leads to the following \emph{adaptive Newton algorithm}.\\

\begin{algorithm}~\label{al:zs}
Fix a tolerance $\tau>0$ as well as a parameter~$\gamma>0$, and set~$n\gets0$.
\begin{algorithmic}[1]
\State Start the Newton iteration with an initial guess $ u_{0}\in\Xi$.
\If {$n=0$} {choose
\[
\Delta t_0=\min\left\{\sqrt{2\tau\NN{\NF(u_{0})}_{X}^{-1}},1\right\},
\]
based on~\cite[Algorithm~2.1]{AmreinWihler:15} (cf. also~\cite{AmreinWihler:14}),}
\ElsIf {$n\ge 1$} {let $\kappa_{n}=\Delta t_{n-1}$, and $h_n=\gamma \kappa_n\|\NF(u_n)\|_X^{-2}$ based on~\eqref{eq:h}; define the Newton step size
\begin{equation}\label{eq:knew}
\Delta t_n=\min\left\{\sqrt{2\tau h_n\NN{\NF(u_n+h_n\NF(u_n))-\NF(u_n)}^{-1}_{X}},1\right\}.
\end{equation}}
\EndIf
\State Compute~$u_{n+1}$ based on the Newton iteration~\eqref{eq:damped}, and go to~({\footnotesize 3:}) with $n\leftarrow n+1 $. 
\end{algorithmic}
\end{algorithm}

We notice that the minimum in~\eqref{eq:knew} ensures that the step size~$\Delta t_n$ is chosen to be~1 whenever possible. Indeed, this is required in order to guarantee quadratic convergence of the Newton iteration close to a root (provided that the root is simple). Furthermore, we remark that the prescribed tolerance~$ \tau $ in the above adaptive strategy will typically be fixed {\em a priori}. Here,  for highly nonlinear problems featuring numerous or even infinitely many solutions, it is typically mandatory to select~$\tau\ll 1$ small in order to remain within the attractor of the given initial guess. This is particularly important if the starting value is relatively far away from a solution.

\subsection{Application to Semilinear PDEs}
In this article, we suppose that a (not necessarily unique) solution~$u\in X:=H^1_0(\Omega)$ of~\eqref{eq:PDE} exists; here, we denote by $H^1_0(\Omega)$ the standard Sobolev space of functions in~$H^1(\Omega)=W^{1,2}(\Omega)$ with zero trace on~$\partial\Omega$. Furthermore, signifying by~$X'=H^{-1}(\Omega)$ the dual space of~$X$, and upon defining the map $\F: X\rightarrow X'$ through
\begin{equation}\label{eq:Fweak}
\dprod{\F(u),v}:=  \int_{\Omega}\left\{ \eps\nabla u\cdot \nabla v+uv-f(u)v\right\}\dx\qquad \forall v\in X,
\end{equation}
where $\dprod{\cdot,\cdot}$ is the dual product in~$X'\times X$, the above problem~\eqref{eq:PDE} can be written as a nonlinear operator equation in~$X'$:
\begin{equation}\label{eq:F0}
u\in X:\qquad \F(u)=0.
\end{equation}
For any subset~$D\subseteq\Omega$, we denote by~$\|\cdot\|_{0,D}$ the $L^2$-norm on~$D$; in 
the case when $D=\Omega$, we simply write $\|\cdot\|_{0}$ in lieu of $\|\cdot\|_{0,\Omega}$.
With this notation, we note that the space~$X$ is equipped with the 
norm
\[
\|u\|^2_X:=\eps\|\nabla u\|^2_{0}+\|u\|_0^2,\qquad u\in X. 
\]
The Fr\'echet-derivative of the 
operator~$\F$ from~\eqref{eq:F0} at~$u\in X$ is given by
\begin{equation*}
\dprod{\F'(u)w,v}= \int_{\Omega}\left\{\eps\nabla w\cdot \nabla v+wv-f'(u)wv\right\}\dx,\qquad v,w\in X=H^1_0(\Omega),
\end{equation*}
where we write~$f'\equiv\partial_u f$. We note that, if there is a constant~$\omega>1$ for which~$f'(u)\in L^{\omega}(\Omega)$, then $\F'(u)$ is a well-defined linear and bounded mapping from~$X$ to~$X'$; see~\cite[Lemma~A.1]{AmreinWihler:15}.

Now given an initial guess~$u_0\in X$, the adaptive Newton method~\eqref{eq:damped} for~\eqref{eq:F0} is defined iteratively to find~$u_{n+1}\in X$ from~$u_n\in X$, $n\ge 0$, such that
\begin{equation*}
\F'(u_n)(u_{n+1}-u_n)=-\Delta t_n\F(u_n),
\end{equation*}
in~$X'$. When applied to~\eqref{eq:Fweak} and~\eqref{eq:F0}, this turns into
\begin{align*}
\int_{\Omega}\{\eps\nabla (u_{n+1}-u_n)&\cdot \nabla v+(u_{n+1}-u_n)v-f'(u_n)(u_{n+1}-u_n)v\}\dx\\
&=-\Delta t_n\int_{\Omega}\left\{ \eps\nabla u_n\cdot \nabla v+u_nv-f(u_n)v\right\}\dx\qquad\forall v\in X.
\end{align*}
%Hence, upon introducing
%\begin{equation}\label{eq:ut}
%\ut_{n+1}:=u_{n+1}-(1-\Delta t_n)u_n,
%\end{equation}
%and
%\begin{equation}\label{eq:ft}
%\f{u_n}:=\Delta t_n f(u_n)-f'(u_n)u_n,
%\end{equation}
Hence, for~$n\ge 0$, the updated Newton iterate~$u_{n+1}$ is defined through the \emph{linear} weak formulation
\begin{equation}\label{eq:weak}
\begin{split}
\int_\Omega \{\eps\nabla \ut_{n+1}\cdot\nabla v&+\ut_{n+1}v -f'(u_n) \ut_{n+1}v \}\dx 
=\Delta t_n\int_\Omega \{ f(u_n)-f'(u_n)u_n\} v\dx
\qquad\forall v\in X,
\end{split}
\end{equation}
where~$\ut_{n+1}=u_{n+1}-(1-\Delta t_n)u_n$. Incidentally, if there exists a constant~$\delta$ with~$\eps^{-1}(f'(u_n)-1)\le\delta<\CP^{-2}$ on~$\Omega$, where~$\CP=\CP(\Omega)>0$ is the constant in the Poincar\'e-Friedrichs inequality on~$\Omega$,
\[
\|w\|_{0}\le \CP\|\nabla w\|_{0}\qquad\forall w \in X,
\]
then \eqref{eq:weak} is a linear second-order diffusion-reaction problem that is coercive on~$X$. In particular, \eqref{eq:weak} exhibits a unique solution~$u_{n+1}\in X$ in this case.

%%%%%%%%%%%%%%%%%%%%%%%%%%%%%%%%%%%%%%%%%%%%%%%%%%%%%%%%%%%%%%%

\section{$hp$--DG Discretisation}\label{sc:dgfem}

\subsection{Meshes, Spaces, and DG Flux Operators}

We will employ a standard $hp$--DG setting; see, e.g.,~\cite{HoustonSchotzauWihler:07,SchotzauZhu:11}.

\subsubsection{Meshes and DG Spaces}
Let $\T$ be a subdivision of $\Omega$ into disjoint open parallelograms $\kappa$ such that $\overline \Omega =
\bigcup_{\kappa \in \T} {\overline \kappa}$. We assume
that~$\T$ is shape-regular, and that each $\kappa \in \T$ is an affine
image of the unit square~$\widehat\kappa=(0,1)^2$; i.e., for each
$\kappa\in\T$ there exists an affine element mapping $\Psi_\kappa:\,\widehat\kappa\to\kappa$ such that $\kappa = \Psi_\kappa({\widehat \kappa})$. By $h_\kappa$ we denote the
element diameter of $\kappa\in\T$, $h = \max_{\kappa \in {\mathcal
T}_h} h_\kappa$ is the mesh size, and $\bm n_\kappa$ signifies the unit outward
normal vector to $\kappa$ on~$\partial\kappa$. Furthermore, we assume that $\T$ is of bounded
local variation, i.e., there exists a constant $\rho_1\ge 1$,
independent of the element sizes, such that $\rho_1^{-1}\le \nicefrac{h_\kappa}{h_{\kappa'}}\le \rho_1$, for any pair of elements $\kappa, \kappa'\in\T$ which share a common edge $e=(\partial\kappa\cap\partial\kappa')^\circ$. In this context, let us consider the set ${\mathcal E}$ of all one-dimensional open edges of all elements $\kappa \in\T$. Further, we denote by $\mathcal{E}_{\I}$ the set of all edges $e$ in $\mathcal{E}$ that are contained in $\Omega$ (interior edges). Additionally, introduce ${\mathcal E}_{\BB}$ to be the set of boundary edges consisting of all  $e\in{\mathcal E}$ that are contained in $\partial\Omega$. In our analysis, we allow the meshes to be 1-irregular, i.e., each edge of an element~$\kappa\in\T$ may contain (at most) one hanging node, which we assume to be located at the centre of~$e$. Suppose that $e$ is an edge of an element $\kappa \in \T$; then, by $h_e$, we denote the length of $e$. Due to our assumptions on the subdivision $\T$ we have that, if $e \subset \partial \kappa$, then $h_e$ is commensurate with $h_{\kappa}$, the diameter of
$\kappa$. 

For a nonnegative integer $k$, we denote by ${\mathcal Q}_k({\widehat \kappa})$ 
the set of all tensor-product polynomials on $\widehat \kappa$ of degree $k$ in
each co-ordinate direction. To each $\kappa \in \T$ we
assign a polynomial degree $p_{\kappa}$ (local approximation order). We store the quantities $h_\kappa$ and $p_{\kappa}$ in the
vectors ${\bm h} = \{ h_{\kappa}:\, \kappa \in\T\}$ and ${\bm p} = \{
p_{\kappa}:\, \kappa \in \T\}$, respectively, and
consider the DG finite element space
\begin{equation}\label{eq:VDG}
\V = \{ v \in L^2(\Omega):\;v |_{\kappa} \circ \Psi_{\kappa} \in
{\mathcal Q}_{p_{\kappa}}
    (\widehat{\kappa}) \quad \forall \kappa \in \T \} \; .
\end{equation}
We shall suppose that the polynomial degree vector $\mathbf{p}$, with
$p_{\kappa}\geq 1$ for each $\kappa \in \mathcal{T}$, has 
bounded local variation, i.e., there exists a constant $\rho_2\geq
1$ independent of $\bm h$ and $\bm p$, such that, for any pair of
neighbouring elements $\kappa, \kappa'\in\T$, we have $\rho_2^{-1} \leq \nicefrac{p_{\kappa}}{p_{\kappa'}} \leq \rho_2$. Moreover, for an edge~$e=(\partial\kappa\cap\partial\kappa')^\circ$ shared by two elements~$\kappa,\kappa'\in\T$, we define~$p_e:=\nicefrac12(p_{\kappa}+p_{\kappa'})$, or~$p_e=p_\kappa$ if~$e=(\partial\kappa\cap\partial\Omega)^\circ$, for some~$\kappa\in\T$, is a boundary edge. 

\subsubsection{Jump and Average Operators}
Let $\kappa$ and $\kappa'$ be two adjacent elements of~ $\T$, and
$\bm x$ an arbitrary point on the interior edge $e\in\E_{\I}$ given by
$e=(\partial \kappa\cap\partial \kappa')^\circ$.  Furthermore, let $v$
and~$\bm{q}$ be scalar- and vector-valued functions, respectively,
that are sufficiently smooth inside each
element~$\kappa,\kappa'$. Then, the averages of $v$ and
$\bm{q}$ at $\bm{x}\in e$ are given by
\[
\avg{v}=\frac{1}{2}(v|_{\kappa}+v|_{\kappa'}), \qquad \avg{\bm{q}}
=\frac{1}{2}(\bm{q}|_{\kappa}+\bm{q}|_{\kappa'}),
\] 
respectively.  Similarly, the jumps of $v$ and $\bm{q}$ at $\bm{x}\in
e$ are given by
\[
\jmp{v} =v|_{\kappa}\,\bm{n}_{\kappa}+v|_{\kappa'}\,\bm{n}_{\kappa'},\qquad
\jmp{\bm{q}}=\bm{q}|_{\kappa}\cdot\bm{n}_{\kappa}+\bm{q}|_{\kappa'}\cdot\bm{n}_{\kappa'},
\]
respectively. On a boundary edge $e\in\E_\BB$,
we set $\avg{v}=v$, $\avg{\bm{q}}=\bm{q}$ and $\jmp{v}=v\bm{n}$,
with~$\bm{n}$ denoting the unit outward normal vector on the boundary
$\partial\Omega$.

Furthermore, we introduce, for an edge $e\in{\mathcal E}$, the discontinuity penalisation parameter~$\sigma$ by
\begin{equation}\label{sigma}
\sigma|_{e} = \frac{p_e^2}{h_e}.
\end{equation}
We conclude this section by equipping the DG space $\V$ with the DG norm
\begin{equation}\label{eq:DGnorm}
\NN{v}_{\DG}^2 := \eps\NN{\nT v}^2_{0}+\NN{v}^2_{0}+\int_{\E}(\eps\sigma+\sigma^{-1})|\jmp{v}|^2\ds,
\end{equation}
which is induced by the DG inner product
\begin{equation}\label{eq:ip}
(v,w)_{\DG} = \int_{\Omega}\left\{\eps
\nT v\cdot\nT w +vw\right\}\dx + \int_{\E} (\eps\sigma+\sigma^{-1})\jmp{w}\cdot\jmp{v}\ds. 
\end{equation}
Here, $\nT$ is the element-wise gradient operator. For an element~$\kappa\in\T$ we shall also use the norm
\[
\|v\|_{\eps,\kappa}^2:=\eps\NN{\nabla v}^2_{0,\kappa}+\NN{v}^2_{0,\kappa},
\]
for~$v\in H^1(\kappa)$.

\subsubsection{Conforming Subspaces}

For a given DG finite element space~$\V$, cf.~\eqref{eq:VDG}, we define the extended space
\[
\W:=H^1_0(\Omega)+\V.
\] 
With this notation, the following result holds.

\begin{lemma}\label{lm:A}
There exists a linear operator~$\A:\,\W\to H^1_0(\Omega)$ such that
\begin{equation}\label{eq:equiv}
\begin{split}
\|w-\A w\|^2_{0}&\le \Cp\sum_{e\in\E}\int_{e}\sigma^{-1}|\jmp{w}|^2\ds,\\
\|\nT(w-\A w)\|^2_{0}&\le \Cp\sum_{e\in\E}\int_{e}\sigma|\jmp{w}|^2\ds, 
\end{split}
\end{equation}
for any~$w\in\W$, where~$\Cp>0$ is a constant independent of~$\T$ and of~$\bm p$.
\end{lemma}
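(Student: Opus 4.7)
My plan is to construct $\A$ explicitly as an Oswald-type conforming averaging operator and then to derive the two estimates via $hp$-scaling. To sidestep the fact that $\W=H^1_0(\Omega)+\V$ is not a direct sum, I would first observe that the claim is trivial when $w\in H^1_0(\Omega)$ (both right-hand sides vanish, and one may take $\A w=w$), so the substantive task concerns the DG component. Concretely, given $w\in\W$, write $w=w_0+v$ with $w_0\in H^1_0(\Omega)$ and $v\in\V$, note that $\jmp{w}=\jmp{v}$ on every edge, and set
\[
\A w := w_0 + \Pi_c v,
\]
where $\Pi_c:\V\to \V\cap H^1_0(\Omega)$ is the nodal averaging operator described below. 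Well-definedness, despite the non-uniqueness of the decomposition, follows from the fact that any $\phi\in \V\cap H^1_0(\Omega)$ is a fixed point of $\Pi_c$, so $\A$ collapses the ambiguity $(w_0,v)\mapsto(w_0+\phi,v-\phi)$.

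The operator $\Pi_c$ would be constructed in the standard way. Fix a unisolvent set of tensor-product Lagrange-type nodes (e.g.\ Gauss--Lobatto) on $\widehat\kappa$ for each local degree $p_\kappa$ that distinguishes vertex, edge and interior modes, pull these back to each $\kappa\in\T$ via $\Psi_\kappa$, and for every global node $\xi$ define $(\Pi_c v)(\xi)$ as the arithmetic mean of the one-sided values $v|_\kappa(\xi)$ taken over all elements containing $\xi$, with the value prescribed to be zero on $\partial\Omega$ and with the usual 1-irregularity constraint at hanging nodes (the hanging-node value being determined by the conforming-side trace). The bounded local variation of both $\bm h$ and $\bm p$ ensures that the averaging stencils have uniformly bounded cardinality and that neighbouring element scales are comparable.

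The analytic core is the element-local estimate. On each $\kappa\in\T$ the polynomial $v-\Pi_c v\in\mathcal{Q}_{p_\kappa}(\kappa)$ is determined by prescribed nodal values on $\partial\kappa$ that are themselves linear combinations of the jumps $\jmp{v}$ across the edges of $\kappa$. Using the standard $hp$-trace and inverse inequalities on $\widehat\kappa$ together with affine scaling through $\Psi_\kappa$, I would obtain
\[
\|v-\Pi_c v\|_{0,\kappa}^2\lesssim\sum_{e\subset\partial\kappa}\frac{h_e}{p_e^2}\NN{\jmp{v}}_{0,e}^2,\qquad \|\nabla(v-\Pi_c v)\|_{0,\kappa}^2\lesssim\sum_{e\subset\partial\kappa}\frac{p_e^2}{h_e}\NN{\jmp{v}}_{0,e}^2.
\]
Summing over $\kappa$, and noting that each edge contributes to at most two elements together with $\sigma|_e=p_e^2/h_e$ and $\sigma^{-1}|_e=h_e/p_e^2$, yields the two bounds in \eqref{eq:equiv} with a constant $\Cp$ depending only on the shape-regularity and local-variation constants $\rho_1,\rho_2$. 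This mirrors the program carried out in \cite{HoustonSchotzauWihler:07,HoustonSchotzauWihler:06}, and in the $h$-version in \cite{KarakashianPascal03}.

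The main obstacle is the precise tracking of the $p$-dependence: the sharp powers $p_e^{\pm 2}$ matching $\sigma^{\pm 1}$ come from the $hp$-polynomial trace and inverse inequalities on $\widehat\kappa$, and one must verify that the averaging weights and the hanging-node constraints do not introduce spurious $p$-factors. Once the nodal construction is fixed and the reference-element scalings are in hand, the proof amounts to bookkeeping; the algebraic structure of $\A$ is otherwise transparent.
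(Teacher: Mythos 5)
Your construction is sound and arrives at the same estimates, but it takes a genuinely different route from the paper in two respects. First, the paper does not build the conforming approximation operator by hand: it invokes an existing $hp$-averaging operator $\Ihp:\,\V\to H^1_0(\Omega)$ from Sch\"otzau--Zhu (their Theorem~4.4), which already carries exactly the two bounds with the $\sigma^{\pm1}$ weights; you instead re-derive such an operator via Gauss--Lobatto nodal averaging and reference-element scaling. Second, to make $\A$ well defined on the non-direct sum $\W=H^1_0(\Omega)+\V$, the paper introduces the DG-orthogonal splitting $\V=\V^\|\oplus\V^\perp$ with $\V^\|=\V\cap H^1_0(\Omega)$ and $\V^\perp=(\id-\P^\|)\V$, which turns $\W$ into the genuine direct sum $H^1_0(\Omega)\oplus\V^\perp$ and yields a unique representative $w=w_0+w_{\DG}^\perp$ to which $\Ihp$ is applied; you work with an arbitrary splitting and quotient out the ambiguity by showing that $\V^\|$ consists of fixed points of your averaging operator. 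Both devices are legitimate, and both exploit $\jmp{w_0}=\bm 0$ in the same way at the end. The paper's route buys brevity and safety, since all the delicate $hp$-analysis is outsourced to the cited theorem; your route buys self-containedness, at the price that the ``bookkeeping'' you defer --- the sharp $p_e^{\pm2}$ powers, the behaviour of the averaging weights under variable polynomial degree on a shared edge, and the hanging-node constraints --- is precisely the nontrivial content of the cited result (and of Karakashian--Pascal in the $h$-version). Your invariance argument also silently assumes that the averaging operator reproduces every element of $\V^\|$ even when neighbouring degrees differ, which requires a careful choice of the edge node sets. I would not call this a gap, but it is the one point where your sketch understates the work.
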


\begin{proof}
Consider the space~$\V^\|:=\V\cap H^1_0(\Omega)$, and denote by~$\P^\|:\,\V\to\V^\|$ 
the orthogonal projection with respect to the inner product defined in~\eqref{eq:ip},
i.e.,
\[
w\in\V:\quad (w-\P^\|w,v)_{\DG}=0\qquad\forall v\in \V^\|.
\]
Then, defining the subspace~$\V^\perp:=(\id-\P^\|)\V$, we have the direct 
sum~$\V=\V^\|\oplus\V^\perp$, as well as
\begin{equation}\label{eq:spacedecomp}
\W=H^1_0(\Omega)\oplus\V^\perp.
\end{equation}
Based on our assumptions on the mesh~$\T$, and referring to~\cite[Theorem~4.4]{SchotzauZhu:11}, there exists an operator~$\Ihp:\,\V\to H^1_0(\Omega)$ that satisfies
\begin{align*}
\sum_{\kappa\in\T}\|v-\Ihp v\|^2_{L^2(\kappa)}&\le C\sum_{e\in\E}\int_{e}\sigma^{-1}|\jmp{v}|^2\ds,\\
\sum_{\kappa\in\T}\|\nabla(v-\Ihp v)\|^2_{L^2(\kappa)}&\le C\sum_{e\in\E}\int_{e}\sigma|\jmp{v}|^2\ds,
\end{align*}
for any~$v\in\V$. By virtue of~\eqref{eq:spacedecomp}, we can now construct the operator~$\A$ as follows: for any~$w\in\W$, there exist unique representatives~$w_0\in H^1_0(\Omega)$ and~$w_{\DG}^\perp\in\V^\perp$ with~$w=w_0+w_{\DG}^\perp$.
Hence, defining~$\A w:=w_0+\Ihp w_{\DG}^\perp\in H^1_0(\Omega)$, and employing the previous estimates, we obtain
\begin{align*}
\|\nT(w-\A w)\|^2_{0}
&=\sum_{\kappa\in\T}\|\nabla(w_{\DG}^\perp-\Ihp w_{\DG}^\perp)\|^2_{L^2(\kappa)}
\le C\sum_{e\in\E}\int_{e}\sigma|\jmp{w_{\DG}^\perp}|^2\ds.
\end{align*}
Since~$w_0\in H^1_0(\Omega)$, we notice that~$\jmp{w_0}|_e=\bm 0$ for all~$e\in\E$;
thereby,
\[
\|\nT(w-\A w)\|^2_{0}
\le C\sum_{e\in\E}\int_{e}\sigma|\jmp{w}|^2\ds,
\]
which proves the second bound in~\eqref{eq:equiv}. The first inequality results from an analogous argument.
\end{proof}

\begin{remark}\label{rm:A}
We note that any $v\in H^1_0(\Omega)$ satisfies~$\jmp{v}=\bm 0$ on~$\E$; thereby, in view of~\eqref{eq:equiv}, it follows that~$\A v=v$ for all~$v\in H^1_0(\Omega)$. Furthermore, for~$w\in\W$, upon application of the triangle inequality and Lemma~\ref{lm:A}, we deduce that
\begin{align*}
\|\A w\|_{X}^2
&=\eps\|\nabla\A w\|_0^2+\|\A w\|^2_0\\
&\le2\eps\|\nabla w\|_0^2+2\|w\|^2_0
+2\eps\|\nabla(w-\A w)\|_0^2+2\|w-\A w\|^2_0\\
&\le2\eps\|\nabla w\|_0^2+2\|w\|_0^2
+2\Cp\sum_{e\in\E}\int_{e}(\eps\sigma+\sigma^{-1})|\jmp{w}|^2\ds.
\end{align*}
Thus the following stability estimate holds
\begin{equation}\label{eq:stab}
%\|\A w\|_{\DG}=
\|\A w\|_{X}
\le C_\eqref{eq:stab}\|w\|_{\DG}\qquad\forall w\in\W,
\end{equation}
where~$C_\eqref{eq:stab}=\sqrt{2\max(1,\Cp)}$.
\end{remark}

\subsection{Linear $hp$--DG Approximation}

The $hp$--version interior penalty DG discretisation of~\eqref{eq:weak} 
is given by: find~$\uDG_{n+1}\in\V$ from~$\uDG_n$ such that
\begin{equation}\label{eq:dgfem}
\a(\uDG_n;\uDG_{n+1},v)=\l(\uDG_n;v)\qquad\forall v\in\V.
\end{equation}
Here, for a method parameter~$\theta\in[-1,1]$ and a penalty parameter~$C_{\sigma}\ge 0$, we define the forms
\begin{equation}\label{eq:a}
\begin{split}
\a(\uDG_n;\uDG_{n+1},v):=
&\int_\Omega\left\{\eps\nT\utDG_{n+1}\cdot\nT v+\utDG_{n+1} v-f'(\uDG_n)\utDG_{n+1}v\right\}\dx\\
&-\int_{\E}\left\{\avg{\eps\nT\utDG_{n+1}}\cdot\jmp{v}+\theta\jmp{\utDG_{n+1}}\cdot\avg{\eps\nT v}\right\}\ds\\
&\quad+C_{\sigma}\int_{\E}\eps\sigma\jmp{\utDG_{n+1}}\cdot\jmp{v}\ds,
\end{split}
\end{equation}
and
\[
\l(\uDG_n;v) = \int_\Omega\f{\uDG_n}v\dx,
\]
for~$v\in\V$, where for $n\ge 0$, we set
\begin{equation} \label{eq:hat}
\begin{split}
\utDG_{n+1} & :=\uDG_{n+1}-(1-\Delta t_n)\uDG_n,	\\
\f{\uDG_n} & := \Delta t_n (f(\uDG_n)-f'(\uDG_n)\uDG_n).
\end{split}
\end{equation}
The choices~$\theta\in\{-1,0,1\}$ correspond, respectively, to the non-symmetric (NIPG), 
incomplete (IIPG), and symmetric (SIPG) interior penalty DG schemes; 
cf.~\cite{StammWihler:10}. For the IIPG and SIPG methods,
the penalty parameter $C_{\sigma}$ must be chosen sufficiently large to guarantee
stability of the underlying DG scheme, 
cf. \cite{WihlerFrauenfelderSchwab:03}, for example. Furthermore, an additional
constraint on the minimal value of $C_{\sigma}$ will be introduced in 
Proposition~\ref{prop:residual_bounds} below.

%%%%%%%%%%%%%%%%%%%%%%%%%%%%%%%%%%%%%%%%%%%%%
\section{$hp$--Version \emph{A Posteriori} Analysis}
\label{sc:apost}

\subsection{A DG Residual}
We introduce a residual operator
\[
\R:\,\W\to \W',
\]
where~$\W'$ is the dual space of~$\W$, as follows: given the operator~$\A$ constructed in Lemma~\ref{lm:A}, and~$w\in\W$, let us define
\begin{equation}\label{eq:R}
\begin{split}
\dprod{\R(w),v}:&=\int_\Omega\left\{\eps\nT w\cdot\nabla \A v+w\A v-f(w)\A v\right\}\dx\\
&\quad+C_{\sigma}\int_{\E}(\eps\sigma+\sigma^{-1})\jmp{w}\cdot\jmp{v}\ds
\qquad\forall v\in \W,
\end{split}
\end{equation}
with~$\sigma$ from~\eqref{sigma}, and~$C_{\sigma}$ appearing in~\eqref{eq:a}. Furthermore, for~$w\in\W$, we introduce the norm
\begin{equation}\label{eq:NNN}
\NNN{\R(w)}:=\sup_{\phi\in\W}\frac{\dprod{\R(w),\phi}}{\|\phi\|_{\DG}}.
\end{equation}
For a solution~$u\in H^1_0(\Omega)$ of~\eqref{eq:PDE}, we again note 
that~$\jmp{u}=\bm 0$ on~$\E$, and, hence, due to~\eqref{eq:Fweak} and~\eqref{eq:F0}, we conclude that
\begin{equation}\label{eq:R0}
\dprod{\R(u),v}=0\qquad\forall v\in \W.
\end{equation}
Moreover, the following result shows that, under suitable conditions on the nonlinearity~$f$, the norm~$\NNN{\R(\cdot)}$ defined in~\eqref{eq:NNN} is directly related to the DG-norm given in~\eqref{eq:DGnorm}. In this sense, we may employ the norm~$\NNN{\R(\cdot)}$ as a natural measure for the approximation in the Newton-DG formulation~\eqref{eq:dgfem}. 

\begin{proposition} \label{prop:residual_bounds}
Suppose that there exist constants~$\varrho_0>-1$ and $L\ge0$ such that~$f$ satisfies
\begin{equation}\label{eq:f}
\varrho_0\le -f',\qquad\text{and}\qquad |f'|\le L,
\end{equation}
on~$\overline\Omega\times\mathbb{R}$. Furthermore, assume that the penalty 
parameter~$C_{\sigma}$ is sufficiently large so that
\[
C_{\sigma}\ge \frac{c_0}{2}+\frac{\Cp(1+L)^2}{2c_0},
\]
where~$\Cp$ is the constant arising in the bounds~\eqref{eq:equiv}, and~$c_0=1+\min(0,\varrho_0)>0$. Then, for any weak solution~$u\in H^1_0(\Omega)$ of~\eqref{eq:PDE}, the following bounds hold
\begin{equation}\label{eq:sup}
\frac{c_0}{2}\|u-w\|_{\DG}\le\NNN{\R(w)}
\le \sqrt2\max\left(C_{\eqref{eq:stab}}(1+L),C_{\sigma}\right)\|u-w\|_{\DG}
\end{equation}
for all~$w\in\W$, where~$C_\eqref{eq:stab}$ is the constant arising in~\eqref{eq:stab}.
\end{proposition}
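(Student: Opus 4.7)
The starting point is the orthogonality identity~\eqref{eq:R0}: since $u\in H^1_0(\Omega)$ is a weak solution of~\eqref{eq:PDE}, one has $\dprod{\R(u),v}=0$ for every $v\in\W$, and $\jmp{u}=\bm 0$ on every edge. Consequently
\[
\dprod{\R(w),v}=\int_\Omega\left\{\eps\nT(w-u)\cdot\nabla\A v+(w-u)\A v-(f(w)-f(u))\A v\right\}\dx+C_\sigma\int_\E(\eps\sigma+\sigma^{-1})\jmp{w-u}\cdot\jmp v\ds.
\]
A mean-value representation gives $f(w)-f(u)=f'(\xi)(w-u)$ pointwise for some intermediate state $\xi$, to which the assumptions~\eqref{eq:f} apply. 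I then treat the two inequalities in~\eqref{eq:sup} separately.

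For the upper bound, Cauchy--Schwarz on the volume (grouping the $\eps$-gradient and $L^2$ pieces jointly as an $X$-type pairing) combined with $|f'|\le L$ yields the estimate $(1+L)\bigl(\eps\NN{\nT(w-u)}_0^2+\NN{w-u}_0^2\bigr)^{1/2}\NN{\A v}_X$ for the volume integral, which by the stability bound~\eqref{eq:stab} is dominated by $(1+L)C_{\eqref{eq:stab}}\bigl(\eps\NN{\nT(w-u)}_0^2+\NN{w-u}_0^2\bigr)^{1/2}\NN{v}_{\DG}$. The penalty contribution is bounded directly via Cauchy--Schwarz on $\E$ with leading constant $C_\sigma$. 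Adding the two bounds and applying Cauchy--Schwarz in $\mathbb{R}^2$, together with $\eps\NN{\nT(w-u)}_0^2+\NN{w-u}_0^2+\int_\E(\eps\sigma+\sigma^{-1})|\jmp{w-u}|^2\ds=\NN{w-u}_{\DG}^2$, produces the asserted constant $\sqrt{(C_{\eqref{eq:stab}}(1+L))^2+C_\sigma^2}\le\sqrt 2\max(C_{\eqref{eq:stab}}(1+L),C_\sigma)$.

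For the lower bound I test the supremum~\eqref{eq:NNN} with $v=w-u\in\W$, decomposing $w-u=\A(w-u)+\psi$ where $\psi:=(w-u)-\A(w-u)$. Substituting $\A(w-u)=(w-u)-\psi$ and using $-f'\ge\varrho_0$ together with $c_0=1+\min(0,\varrho_0)\le 1$ produces a ``main'' positive quantity of at least $\eps\NN{\nT(w-u)}_0^2+c_0\NN{w-u}_0^2+C_\sigma\int_\E(\eps\sigma+\sigma^{-1})|\jmp{w-u}|^2\ds$, minus three ``error'' terms involving $\psi$. Combining Cauchy--Schwarz, $|f'|\le L$, and the estimates of Lemma~\ref{lm:A} (which supply $\eps\NN{\nabla\psi}_0^2+\NN{\psi}_0^2\le\Cp\int_\E(\eps\sigma+\sigma^{-1})|\jmp{w-u}|^2\ds$) gives the aggregate bound $(1+L)\sqrt{\Cp}\bigl(\eps\NN{\nT(w-u)}_0^2+\NN{w-u}_0^2\bigr)^{1/2}\bigl(\int_\E(\eps\sigma+\sigma^{-1})|\jmp{w-u}|^2\ds\bigr)^{1/2}$ on the total error.

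The decisive step — and the main obstacle — is an absorption argument with a carefully tuned Young's parameter $\alpha=c_0$, which splits the error into $\frac{c_0}{2}(\eps\NN{\nT(w-u)}_0^2+\NN{w-u}_0^2)$ plus $\frac{\Cp(1+L)^2}{2c_0}\int_\E(\eps\sigma+\sigma^{-1})|\jmp{w-u}|^2\ds$. Since $c_0\le 1$, subtracting from the main contribution leaves $1-c_0/2\ge c_0/2$ in front of the gradient piece, $c_0-c_0/2=c_0/2$ in front of $\NN{w-u}_0^2$, and the hypothesis $C_\sigma\ge c_0/2+\Cp(1+L)^2/(2c_0)$ is exactly what is needed to leave $\ge c_0/2$ in front of the jump integral. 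Hence $\dprod{\R(w),w-u}\ge\frac{c_0}{2}\NN{w-u}_{\DG}^2$, and dividing by $\NN{w-u}_{\DG}$ inside~\eqref{eq:NNN} finishes the proof. The nontrivial point is precisely this simultaneous absorption: tuning Young's parameter to $c_0$ is the unique choice that leaves the coefficient $c_0/2$ in all three components of $\NN{w-u}_{\DG}^2$, and this forces the stated lower bound on $C_\sigma$.
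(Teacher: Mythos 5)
Your proof is correct and follows essentially the same route as the paper: the same orthogonality identity $\dprod{\R(u),v}=0$, the same splitting of $\A v$ into a conforming part plus the defect $\psi=v-\A v$ controlled by Lemma~\ref{lm:A}, and the same Young absorption with parameter $c_0$ that turns the hypothesis on $C_\sigma$ into the coefficient $\nicefrac{c_0}{2}$ on all three components of $\|u-w\|_{\DG}^2$. The only cosmetic difference is that you group the gradient and $L^2$ error terms into a single $X$-type pairing before applying Young, whereas the paper absorbs them separately; both yield identical constants.
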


\begin{proof}
The two bounds are proved separately. Let~$w\in\W$, then employing~\eqref{eq:R0}, and 
noting that~$\A u=u$, cf.~Remark~\ref{rm:A}, we obtain
\begin{align*}
\dprod{\R(w),w-u}
&=\dprod{\R(u)-\R(w),u-w}\\
&=\eps\int_\Omega\nT(u-w)\cdot\nabla(u-\A w)\dx+\int_\Omega(u-w)(u-\A w)\dx\\
&\quad-\int_\Omega(f(u)-f(w))(u-\A w)\dx
+C_{\sigma}\int_{\E}(\eps\sigma+\sigma^{-1})|\jmp{u-w}|^2\ds\\
&=\|u-w\|_{\DG}^2+\eps\int_\Omega\nT(u-w)\cdot\nT(w-\A w)\dx\\
&\quad+\int_\Omega(u-w)(w-\A w)\dx
-\int_\Omega(f(u)-f(w))(u-w)\dx\\
&\quad-\int_\Omega(f(u)-f(w))(w-\A w)\dx
+(C_{\sigma}-1)\int_{\E}(\eps\sigma+\sigma^{-1})|\jmp{u-w}|^2\ds.
\end{align*}
Given the assumptions on $f$ stated in~\eqref{eq:f} hold, we conclude that
\[
-(f(u)-f(w))(u-w)\ge\varrho_0|u-w|^2,\qquad |f(u)-f(w)|\le L|u-w|,
\]
on~$\overline\Omega\times\mathbb{R}$. Thus, applying the Cauchy-Schwarz inequality, we arrive at
\begin{align*}
\dprod{\R(w),w-u}
&\ge(1+\min(0,\varrho_0))\|u-w\|_{\DG}^2
-\eps\|\nT(u-w)\|_0\|\nT(w-\A w)\|_0\\
&\quad-(1+L)\|u-w\|_0\|w-\A w\|_0\\
&\quad+(C_{\sigma}-1-\min(0,\varrho_0))\int_{\E}(\eps\sigma+\sigma^{-1})|\jmp{u-w}|^2\ds.
\end{align*}
Setting~$c_0=1+\min(0,\varrho_0)$, we deduce that
\begin{align*}
\dprod{\R(w),w-u}
&\ge c_0\|u-w\|_{\DG}^2
-\frac{c_0\eps}{2}\|\nT(u-w)\|^2_0-\frac{\eps}{2c_0}\|\nT(w-\A w)\|^2_0\\
&\quad-\frac{c_0}{2}\|u-w\|_0^2-\frac{(1+L)^2}{2c_0}\|w-\A w\|^2_0\\
&\quad+(C_{\sigma}-c_0)\int_{\E}(\eps\sigma+\sigma^{-1})|\jmp{u-w}|^2\ds.
\end{align*}
By virtue of Lemma~\ref{lm:A}, and noting that~$\jmp{u}=\bm 0$ on~$\E$, we get
\begin{align*}
\dprod{\R(w),w-u}
&\ge\frac{c_0}{2}\|u-w\|^2_{\DG}
+\left(C_{\sigma}-\frac{c_0}{2}-\frac{\Cp(1+L)^2}{2c_0}\right)\int_{\E}(\eps\sigma+\sigma^{-1})|\jmp{u-w}|^2\ds\\
&\ge\frac{c_0}{2}\|u-w\|^2_{\DG}.
\end{align*}
This gives the first bound in~\eqref{eq:sup}. In order to show the second estimate, we employ~\eqref{eq:f} and the Cauchy-Schwarz inequality, for any~$v\in \W$, to infer that
\begin{align*}
\dprod{\R(w),v}
&=\dprod{\R(w)-\R(u),v}\\
&=\int_\Omega\left\{\eps\nT (w-u)\cdot\nabla \A v+(w-u)\A v-(f(w)-f(u))\A v\right\}\dx\\
&\quad+C_{\sigma}\int_{\E}(\eps\sigma+\sigma^{-1})\jmp{w-u}\cdot\jmp{v}\ds\\
&\le\eps\|\nT(w-u)\|_0\|\nabla\A v\|_0+(1+L)\|w-u\|_0\|\A v\|_0\\
&\quad+\left(C_{\sigma}^2C_\eqref{eq:stab}^{-2}\int_{\E}(\eps\sigma+\sigma^{-1})|\jmp{w-u}|^2\ds\right)^{\nicefrac12}
\left(C_\eqref{eq:stab}^2\int_{\E}(\eps\sigma+\sigma^{-1})|\jmp{v}|^2\ds\right)^{\nicefrac12}\\
&\le\max\left(1+L,C_{\sigma} C_\eqref{eq:stab}^{-1}\right)\|u-w\|_{\DG}
\left(\|\A v\|^2_{X}+C_\eqref{eq:stab}^2\int_{\E}(\eps\sigma+\sigma^{-1})|\jmp{v}|^2\ds\right)^{\nicefrac12}.
\end{align*}
Recalling the stability of~$\A$ from~\eqref{eq:stab} yields
\[
\dprod{\R(w),v}\le \sqrt2C_{\eqref{eq:stab}}\max\left(1+L,C_{\sigma} C_\eqref{eq:stab}^{-1}\right)\|u-w\|_{\DG}\|v\|_{\DG}.
\]
This implies the second bound in~\eqref{eq:sup}, and, thus, completes the proof.
\end{proof}

\subsection{\emph{A Posteriori} Residual Analysis}

In this section we develop a residual--based \emph{a posteriori} numerical analysis for the $hp$--NDG method~\eqref{eq:dgfem}.  

\subsubsection{$hp$--Approximation Estimates}\label{sc:approx}
Let~$v\in\W$ be arbitrary, and consider~$\A v\in H^1_0(\Omega)$ as in Lemma~\ref{lm:A}. Then, we may choose~$\phi^{\DG}\in\V$ such that, for all~$\kappa\in\T$, the stability bound
\[
\|\A v-\phi^{\DG}\|_{0,\kappa}\le \|\A v\|_{0,\kappa},
\]
as well as the approximation estimate
\begin{equation}\label{eq:interp}
\begin{split}
\|\nabla(\A v-\phi^{\DG})\|^2_{0,\kappa}
&+\frac{p_\kappa^2}{h_\kappa^2}\|\A v-\phi^{\DG}\|^2_{0,\kappa}
\le C_\eqref{eq:interp}\left(\|\nabla\A v\|^2_{0,\kappa}+\|\A v\|_{0,\kappa}^2\right)
\end{split}
\end{equation}
hold \emph{simultaneously}, where $C_\eqref{eq:interp}$ is a positive constant, independent of~$\bm h, \bm p$, 
and~$\A v$; see~\cite[\S~3.1]{KarkulikMelenk:15}. Since~$\eps\in(0,1]$, we infer the bound
\[
\eps\|\nabla(\A v-\phi^{\DG})\|^2_{0,\kappa}\le C_\eqref{eq:interp}\|\A v\|_{\eps,\kappa}^2,
\]
and
\begin{equation}\label{eq:interp3}
\eps^{\nicefrac12}\|\nabla\phi^{\DG}\|_{0,\kappa}
\le\eps^{\nicefrac12}\|\nabla(\A v-\phi^{\DG})\|_{0,\kappa}+
\eps^{\nicefrac12}\|\nabla\A v\|_{0,\kappa}
\le C_\eqref{eq:interp3}\|\A v\|_{\eps,\kappa}.
\end{equation}
Moreover, following the approach outlined in~\cite{Verfurth:98} (see also~\cite{AmreinWihler:15}), we deduce from the above estimates that
\begin{align}\label{eq:L2bound}
\|\A v-\phi^{\DG}\|^2_{0,\kappa}
&\le \min\left(1,C_\eqref{eq:interp}\eps^{-1}h_\kappa^2p^{-2}_\kappa\right)\|\A v\|^2_{\eps,\kappa}
\le \max\left(1,C_\eqref{eq:interp}\right)\alpha^2_\kappa\|\A v\|^2_{\eps,\kappa},
\end{align}
where, for $\kappa\in\T$,
\begin{equation}\label{eq:alpha}
\alpha_\kappa:=\min\left(1,\eps^{-\nicefrac12}h_\kappa p^{-1}_\kappa\right).
\end{equation}
Furthermore, applying a multiplicative trace inequality, that is,
\begin{equation}\label{eq:trace}
\|\psi\|^2_{0,\partial\kappa}
\le C_{\eqref{eq:trace}}\left(h_\kappa^{-1}\|\psi\|^2_{0,\kappa}+\|\psi\|_{0,\kappa}\|\nabla\psi\|_{0,\kappa}\right),\qquad \psi\in H^1(\kappa),
\end{equation}
we obtain
\[
\|\A v-\phi^{\DG}\|^2_{0,\partial\kappa}
\le C_\eqref{eq:trace}\max\left(1,C_\eqref{eq:interp}\right)\widetilde\beta^2_\kappa\|\A v\|_{\eps,\kappa}^2,
\]
where, for~$\kappa\in\T$, we define
\[
\widetilde\beta_\kappa:=\sqrt{h^{-1}_\kappa\alpha^2_\kappa+\eps^{-\nicefrac12}\alpha_\kappa}.
\]
Noting the bound
\[
\widetilde\beta_\kappa^2= \eps^{-\nicefrac12}\alpha_\kappa\left(\eps^{\nicefrac12}h_\kappa^{-1}\alpha_\kappa+1\right)
\le\eps^{-\nicefrac12}\alpha_\kappa(p_\kappa^{-1}+1)\le 2\eps^{-\nicefrac12}\alpha_\kappa,
\]
we deduce that
\begin{equation}\label{eq:boundary}
\|\A v-\phi^{\DG}\|_{0,\partial\kappa}
\le C_\eqref{eq:boundary}\beta_\kappa\|\A v\|_{\eps,\kappa},
\end{equation}
where
\begin{equation}\label{eq:beta}
\beta_\kappa:=\eps^{-\nicefrac14}\alpha_\kappa^{\nicefrac12}.
\end{equation}

\subsubsection{Upper \emph{A Posteriori} Residual Bound}

In order to derive an \emph{a posteriori} residual estimate for the $hp$--NDG discretisation~\eqref{eq:dgfem}, we recall the residual
\begin{align*}
\dprod{\R(\uDG_{n+1}),v}
&=\int_\Omega\left\{\eps\nT \uDG_{n+1}\cdot\nabla \A v+\uDG_{n+1}\A v-f(\uDG_{n+1})\A v\right\}\dx\\
&\quad+C_{\sigma}\int_{\E}(\eps\sigma+\sigma^{-1})\jmp{\uDG_{n+1}}\cdot\jmp{v}\ds
\equiv T_1+T_2,
\end{align*}
cf.~\eqref{eq:R}, where we define
\begin{align*}
T_1&:=\int_\Omega\left\{\eps\nT \utDG_{n+1}\cdot\nabla \A v+\utDG_{n+1}\A v-(f'(\uDG_{n})\utDG_{n+1}+\f{\uDG_n)}\A v\right\}\dx\\
&\quad+ C_{\sigma} \int_{\E}(\eps\sigma+\sigma^{-1})\jmp{\utDG_{n+1}}\cdot\jmp{v}\ds,\\
T_2&:=(1-\Delta t_n) \dprod{\R(\uDG_{n}),v}
+ \int_\Omega\left\{f(\uDG_n)+f'(\uDG_{n})(\uDG_{n+1}-\uDG_n)-f(\uDG_{n+1})\right\}\A v.
\end{align*}
Here, $\utDG_{n+1}$ and~$\f{\uDG_n}$ are given in~\eqref{eq:hat}, and~$v\in\W$ is again arbitrary. Recalling~\eqref{eq:dgfem}, we note that
\begin{align*}
\int_\Omega&\left\{\eps\nT\utDG_{n+1}\cdot\nT \phi^{\DG}+\utDG_{n+1}\phi^{\DG}-(f'(\uDG_n)\utDG_{n+1}+\f{\uDG_n})\phi^{\DG}\right\}\dx\\
&=\int_{\E}\left\{\avg{\eps\nT\utDG_{n+1}}\cdot\jmp{\phi^{\DG}}+\theta\jmp{\utDG_{n+1}}\cdot\avg{\eps\nT \phi^{\DG}}\right\}\ds
-C_{\sigma}\int_{\E}\eps\sigma\jmp{\utDG_{n+1}}\cdot\jmp{\phi^{\DG}}\ds,
\end{align*}
with~$\phi^{\DG}\in\V$ as in Section~\ref{sc:approx} above. Therefore,
\begin{align*}
T_1&=\int_\Omega\left\{\eps\nT\utDG_{n+1}\cdot\nT (\A v-\phi^{\DG})+\utDG_{n+1}(\A v-\phi^{\DG})\right\}\dx\\
&\quad-\int_\Omega(f'(\uDG_n)\utDG_{n+1} +\f{\uDG_n})(\A v-\phi^{\DG})\dx\\
&\quad+\int_{\E}\left\{\avg{\eps\nT\utDG_{n+1}}\cdot\jmp{\phi^{\DG}}+\theta\jmp{\utDG_{n+1}}\cdot\avg{\eps\nT \phi^{\DG}}\right\}\ds\\
&\quad+ C_{\sigma} \int_{\E}(\eps\sigma+\sigma^{-1})\jmp{\utDG_{n+1}}\cdot\jmp{v}\ds
-C_{\sigma}\int_{\E}\eps\sigma\jmp{\utDG_{n+1}}\cdot\jmp{\phi^{\DG}}\ds.
\end{align*}
Performing elementwise integration by parts in the first integral, and proceeding as in the proof of~\cite[Theorem~3.2]{HoustonSuliWihler:08}, the following estimate can be established:
\begin{align*}
C|T_1|
&\le\sum_{\kappa\in\T}\|\eps\Delta \utDG_{n+1}-\utDG_{n+1}
  +f'(\uDG_n)\utDG_{n+1}+\f{\uDG_n}\|_{0,\kappa}\|\A v-\phi^{\DG}\|_{0,\kappa}\\
  &\quad+\sum_{\kappa\in\T}\|\eps\jmp{\nT\utDG_{n+1}}\|_{0,\partial\kappa\setminus\partial\Omega}\|\A v-\phi^{\DG}\|_{0,\partial\kappa}
+\left(\sum_{\kappa\in\T}\frac{\eps p_\kappa^2}{h_\kappa}\|\jmp{\utDG_{n+1}}\|^2_{0,\partial\kappa}\right)^{\nicefrac12}\eps^{\nicefrac12}\|\nT\phi^{\DG}\|_0\\
&\quad  +\left(C_{\sigma}^2\int_{\E}(\eps\sigma+\sigma^{-1})|\jmp{\utDG_{n+1}}|^2\ds\right)^{\nicefrac12}
\left(\int_{\E}(\eps\sigma+\sigma^{-1})|\jmp{v}|^2\ds\right)^{\nicefrac12}\\
&\quad  +\left(C_{\sigma}^2\sum_{\kappa\in\T}\frac{\eps^2 \beta_\kappa^2p_\kappa^4}{h_\kappa^2}\|\jmp{\utDG_{n+1}}\|_{0,\partial\kappa}^2\right)^{\nicefrac12}
\left(\sum_{\kappa\in\T}\beta_\kappa^{-2}\|\jmp{\phi^{\DG}}\|_{0,\partial\kappa}^2\right)^{\nicefrac12}.
\end{align*}
Here, $C$ is a positive constant independent of~$\bm h$, $\bm p$, and~$\eps$, and~$\beta_\kappa$ is defined in~\eqref{eq:beta}. Observing that~$\jmp{\A v}=\bm 0$ on~$\E$, and recalling~\eqref{eq:boundary}, we infer the bound
\begin{align*}
\sum_{\kappa\in\T}\beta_\kappa^{-2}\|\jmp{\phi^{\DG}}\|^2_{0,\partial\kappa}
&=\sum_{\kappa\in\T}\beta_\kappa^{-2}\|\jmp{\phi^{\DG}-\A v}\|^2_{0,\partial\kappa}
\le C\sum_{\kappa\in\T}\beta_\kappa^{-2}\|\phi^{\DG}-\A v\|^2_{0,\partial\kappa}\le C\|\A v\|_{X}^2.
\end{align*}
Additionally, exploiting~\eqref{eq:interp3}, 
\eqref{eq:L2bound}, and~\eqref{eq:boundary}, yields
\begin{align*}
C|T_1|
&\le\sum_{\kappa\in\T}\|\eps\Delta \utDG_{n+1}-\utDG_{n+1}
  +f'(\uDG_n)\utDG_{n+1}+\f{\uDG_n}\|_{0,\kappa}\alpha_{\kappa}\|\A v\|_{\eps,\kappa}\\
  &\quad+\sum_{\kappa\in\T}\|\eps\jmp{\nT\utDG_{n+1}}\|_{0,\partial\kappa\setminus\partial\Omega}\beta_\kappa\|\A v\|_{\eps,\kappa}
+\left(\sum_{\kappa\in\T}\frac{\eps p_\kappa^2}{h_\kappa}\|\jmp{\utDG_{n+1}}\|^2_{0,\partial\kappa}\right)^{\nicefrac12}\|\A v\|_{X}\\
&\quad+\left(C_{\sigma}^2\sum_{\kappa\in\T}\left(\frac{\eps p_\kappa^2}{h_\kappa}+\frac{h_\kappa}{p_\kappa^2}\right)\|\jmp{\utDG_{n+1}}\|_{0,\partial\kappa}^2\ds\right)^{\nicefrac12}\|v\|_{\DG}\\
&\quad  +\left(C_{\sigma}^2\sum_{\kappa\in\T}\frac{\eps^2 \beta_\kappa^2p_\kappa^4}{h^2_\kappa}\|\jmp{\utDG_{n+1}}\|_{0,\partial\kappa}^2\right)^{\nicefrac12}\|\A v\|_X,
\end{align*}
with~$\alpha_\kappa$ defined in~\eqref{eq:alpha}. Observing that~$\alpha_\kappa\le\eps^{-\nicefrac12}h_\kappa p_\kappa^{-1}$ yields
\[
\max\left(\frac{\eps p_\kappa^2}{h_\kappa}+\frac{h_\kappa}{p_\kappa^2},\frac{\eps^2 \beta_\kappa^2p_\kappa^4}{h^2_\kappa}\right)\le \frac{\eps p_\kappa^3}{h_\kappa}+\frac{h_\kappa}{p_\kappa^2}.
\]
Hence, applying the Cauchy-Schwarz inequality, and making use of~\eqref{eq:stab}, we arrive at
\begin{align*}
|T_1|\le C\left(\sum_{\kappa\in\T}\eta_{\kappa,n}^2\right)^{\nicefrac12}\|v\|_{\DG},
\end{align*}
where, for any~$\kappa\in\T$, we define the local residual indicators
\begin{equation}\label{eq:eta}
\begin{split}
\eta_{\kappa,n}^2:
&=\alpha_\kappa^2\|\eps\Delta \utDG_{n+1}-\utDG_{n+1}
  +f'(\uDG_n)\utDG_{n+1}+\f{\uDG_n}\|^2_{0,\kappa}\\
&\quad+\beta^2_\kappa\eps^2\|\jmp{\nT\utDG_{n+1}}\|^2_{0,\partial\kappa\setminus\partial\Omega}
+\max\left(1,C_{\sigma}^2\right)\left(\frac{\eps p_\kappa^3}{h_\kappa}+\frac{h_\kappa}{p_\kappa^2}\right)\|\jmp{\utDG_{n+1}}\|^2_{0,\partial\kappa}.
\end{split}
\end{equation}

In order to deal with the term~$T_2$, we apply elementwise integration by parts to obtain
\begin{align*}
\int_\Omega &\{\eps\nT \uDG_n\cdot\nabla \A v+\uDG_n\A v-f(\uDG_n)\A v\}\dx\\
&=-\sum_{\kappa\in\T}\int_{\kappa}\{\eps\Delta\uDG_n-\uDG_n+f(\uDG_n)\}\A v\dx
+\int_{\E_\I}\jmp{\eps\nT\uDG_n}\A v\ds.
\end{align*}
Furthermore, we define the lifting operator
\[
\LL:\,\V\to\V,\qquad w\mapsto\LL(w),
\]
by
\[
\int_\Omega\LL(w)\phi^{\DG}\dx=\int_{\E_\I}\jmp{\nT w}\phi^{\DG}\ds
\qquad\forall\phi^{\DG}\in\V;
\]
cf., e.g., \cite{ArnoldBrezziCockburnMarini:01,SchotzauSchwabToselli:02}. Thereby, we note that
\begin{align*}
\int_\Omega& \{\eps\nT \uDG_n\cdot\nabla \A v+\uDG_n\A v-f(\uDG_n)\A v\}\dx\\
&=-\int_{\Omega}\{\eps\Delta_{\T}\uDG_n-\uDG_n+f(\uDG_n)-\eps\LL(\uDG_n)\}\A v\dx\\
&\quad+\int_{\E_\I}\jmp{\eps\nT\uDG_n}(\A v-\phi^{\DG})\ds
-\int_\Omega\eps\LL(\uDG_n)(\A v-\phi^{\DG})\dx,
\end{align*}
where~$\Delta_{\T}$ is the elementwise Laplacian operator. Applying the Cauchy-Schwarz inequality, and incorporating the bounds from Section~\ref{sc:approx}, we deduce that
\begin{align*}
\bigg|\int_\Omega& \{\eps\nT \uDG_n\cdot\nabla \A v+\uDG_n\A v-f(\uDG_n)\A v\}\dx\bigg|\\
&\le\|\eps\Delta_{\T}\uDG_n-\uDG_n+f(\uDG_n)-\eps\LL(\uDG_n)\|_{0}\|\A v\|_{0}\\
&\quad
+\sum_{\kappa\in\T}\eps\|\jmp{\nT\uDG_n}\|_{0,\partial\kappa\setminus\partial\Omega}
\|\A v-\phi^{\DG}\|_{0,\partial\kappa}
+\sum_{\kappa\in\T}\eps\|\LL(\uDG_n)\|_{0,\kappa}\|\A v-\phi^{\DG}\|_{0,\kappa}\\
&\le\|\eps\Delta_{\T}\uDG_n-\uDG_n+f(\uDG_n)-\eps\LL(\uDG_n)\|_{0}\|\A v\|_{X}+C\sum_{\kappa\in\T}\beta_\kappa\eps\|\jmp{\nT\uDG_n}\|_{0,\partial\kappa\setminus\partial\Omega}\|\A v\|_{\eps,\kappa}\\
&\quad+C\sum_{\kappa\in\T}\alpha_\kappa\eps\|\LL(\uDG_n)\|_{0,\kappa}\|\A v\|_{\eps,\kappa}\\
&\le\|\eps\Delta_{\T}\uDG_n-\uDG_n+f(\uDG_n)-\eps\LL(\uDG_n)\|_{0}\|\A v\|_{X}\\
&\quad
+C\left(\sum_{\kappa\in\T}\left(\beta_\kappa^2\eps^2\|\jmp{\nT\uDG_n}\|^2_{0,\partial\kappa\setminus\partial\Omega}
+\alpha_\kappa^2\eps^2\|\LL(\uDG_n)\|^2_{0,\kappa}\right)\right)^{\nicefrac12}\|\A v\|_{X}.
\end{align*}
Recalling~\eqref{eq:stab}, we get
\begin{align*}
\bigg|\int_\Omega& \{\eps\nT \uDG_n\cdot\nabla \A v+\uDG_n\A v-f(\uDG_n)\A v\}\dx\bigg|\\
&\le\|\eps\Delta_{\T}\uDG_n-\uDG_n+f(\uDG_n)-\eps\LL(\uDG_n)\|_{0}\|v\|_{\DG}\\
&\quad
+C\left(\sum_{\kappa\in\T}\left(\beta_\kappa^2\eps^2\|\jmp{\nT\uDG_n}\|^2_{0,\partial\kappa\setminus\partial\Omega}
+\alpha_\kappa^2\eps^2\|\LL(\uDG_n)\|^2_{0,\kappa}\right)\right)^{\nicefrac12}\|v\|_{\DG}.
\end{align*}
Furthermore, we have
\begin{align*}
\bigg|C_{\sigma}\int_{\E}(\eps\sigma+\sigma^{-1})\jmp{\uDG_{n}}\cdot\jmp{v}\ds\bigg|
&\le C\left(\sum_{\kappa\in\T}C_{\sigma}^2\left(\frac{\eps p_\kappa^2}{h_\kappa}+\frac{h_\kappa}{p_\kappa^2}\right)\|\jmp{\uDG_{n}}\|^2_{0,\partial\kappa}\right)^{\nicefrac12}\|v\|_{\DG}.
\end{align*}
Thus, in summary, we can bound~$T_2$ by
\[
|T_2|
\le C\delta_{n,\Omega}\|v\|_{\DG},
\]
where
\begin{equation}\label{eq:delta1}
\delta_{n,\Omega}:=(1-\Delta t_n)\delta^{(1)}_{n,\Omega}+\delta^{(2)}_{n,\Omega},
\end{equation}
with
\begin{equation}\label{eq:delta2}
\begin{split}
\delta^{(1)}_{n,\Omega}
&:=\|\eps\Delta_{\T}\uDG_n-\uDG_n+f(\uDG_n)-\eps\LL(\uDG_n) \|_{0} \\
&\quad
+ \left(\sum_{\kappa\in\T} \eps^2\alpha_\kappa\left(\eps^{-\nicefrac12}\|\jmp{\nT\uDG_n}\|^2_{0,\partial\kappa\setminus\partial\Omega} +
\alpha_\kappa\|\LL(\uDG_n)\|^2_{0,\kappa}\right) \right)^{\nicefrac12} \\
&\quad
+C_{\sigma} \left(\sum_{\kappa\in\T} \left(\frac{\eps p_\kappa^2}{h_\kappa}+\frac{h_\kappa}{p_\kappa^2}\right)\|\jmp{\uDG_{n}}\|^2_{0,\partial\kappa}\right)^{\nicefrac12},
\end{split}
\end{equation}
and
\begin{equation}\label{eq:delta3}
\delta^{(2)}_{n,\Omega}:=\|f(\uDG_n)+f'(\uDG_{n})(\uDG_{n+1}-\uDG_n)-f(\uDG_{n+1})\|_{0}.
\end{equation}

Thus we have proved the following key result.

\begin{theorem}\label{thm:apost}
For the $hp$--NDG method~\eqref{eq:dgfem}, the following upper \emph{a posteriori} residual bound holds
\[
\NNN{\R(\uDG_{n+1})}\le {\mathcal E}(\uDG_n,\uDG_{n+1},{\bm h},{\bm p}) \equiv 
C\left(\delta_{n,\Omega}^2+\sum_{\kappa\in\T}\eta_{\kappa,n}^2\right)^{\nicefrac12},
\]
where $C$ is a positive constant, independent of~$\bm h$, $\bm p$, the penalty parameter~$C_{\sigma}$, and~$\eps$. Moreover,~$\eta_{\kappa,n}$, $\kappa\in\T$, and~$\delta_{n,\Omega}$ are given in~\eqref{eq:eta} and~\eqref{eq:delta1}--\eqref{eq:delta3}, respectively. 
\end{theorem}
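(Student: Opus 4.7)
The plan is to bound $\dprod{\R(\uDG_{n+1}),v}$ uniformly for $v\in\W$ with $\NN{v}_{\DG}=1$ and then take the supremum to recover $\NNN{\R(\uDG_{n+1})}$. The crucial feature of the residual~\eqref{eq:R} is that its volume part is tested against $\A v\in H^1_0(\Omega)$ rather than $v$, which avoids conformity obstructions when we integrate by parts elementwise. The first step is to algebraically split $\dprod{\R(\uDG_{n+1}),v}=T_1+T_2$ exactly as indicated in the excerpt: $T_1$ carries the shifted unknown $\utDG_{n+1}$ and mirrors the linearised equation~\eqref{eq:dgfem}, while $T_2$ collects the leftover contribution $(1-\Delta t_n)\dprod{\R(\uDG_n),v}$ together with the Newton Taylor remainder $f(\uDG_n)+f'(\uDG_n)(\uDG_{n+1}-\uDG_n)-f(\uDG_{n+1})$.

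To control $T_1$ I would exploit Galerkin orthogonality: subtracting $\a(\uDG_n;\uDG_{n+1},\phi^{\DG})-\l(\uDG_n;\phi^{\DG})=0$ for an arbitrary $\phi^{\DG}\in\V$ replaces $\A v$ with $\A v-\phi^{\DG}$ in every volume and face integral, while leaving a consistency-type boundary term tested against $\phi^{\DG}$ itself. Choosing $\phi^{\DG}$ to be the simultaneously $L^2$-stable and $H^1$-quasi-optimal approximant of $\A v$ described in Section~\ref{sc:approx}, performing elementwise integration by parts on the gradient term to expose the strong PDE residual and the normal-flux jumps, and applying the $\eps$-robust approximation bounds~\eqref{eq:L2bound} and~\eqref{eq:boundary}, produces precisely the volume-residual, gradient-jump, and solution-jump contributions of $\eta_{\kappa,n}$ in~\eqref{eq:eta}. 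For the $\jmp{\phi^{\DG}}$ trace term one uses $\jmp{\A v}=\bm 0$ to rewrite it as $\jmp{\phi^{\DG}-\A v}$ and then invokes~\eqref{eq:boundary} again. Cauchy–Schwarz over $\kappa\in\T$ and the stability bound~\eqref{eq:stab} to reduce $\NN{\A v}_{X}$ to $\NN{v}_{\DG}$ close this part.

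The bound on $T_2$ requires a different device because no Galerkin orthogonality is available. My plan is to apply elementwise integration by parts to the gradient term inside $\dprod{\R(\uDG_n),v}$ and to reabsorb the resulting interior normal-flux jumps $\jmp{\nT\uDG_n}$ into a global volume integral via the lifting operator $\LL$; this produces a strong-residual term pairing with $\A v$, together with corrector terms pairing with $\A v-\phi^{\DG}$, which are controlled exactly as in the $T_1$ argument. The DG penalty contribution in $\dprod{\R(\uDG_n),v}$ is handled directly by Cauchy–Schwarz against $\NN{v}_{\DG}$, and the Newton Taylor remainder is estimated in $L^2(\Omega)$ using $\NN{\A v}_0\le\NN{\A v}_X$ and~\eqref{eq:stab}. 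Collecting these estimates produces $\delta_{n,\Omega}^{(1)}$ and $\delta_{n,\Omega}^{(2)}$ from~\eqref{eq:delta2}--\eqref{eq:delta3}, scaled by $(1-\Delta t_n)$ and $1$ respectively, matching~\eqref{eq:delta1}.

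The principal obstacle I anticipate is the $\eps$-robust bookkeeping: one must consistently track the local weights $\alpha_\kappa$ and $\beta_\kappa$, which degenerate differently as $\eps\to 0$, through the combined use of the approximation estimates~\eqref{eq:interp}, the multiplicative trace inequality~\eqref{eq:trace}, and the stability of~$\A$, while simultaneously keeping the penalty-parameter dependence $C_\sigma$ explicit in both the face terms of $T_1$ and the jump-penalty contribution of $T_2$. Once this calibration is done carefully, summing the $T_1$ and $T_2$ estimates and taking the supremum over $v\in\W$ yields the stated bound.
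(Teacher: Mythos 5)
Your proposal follows essentially the same route as the paper's own derivation: the identical splitting $\dprod{\R(\uDG_{n+1}),v}=T_1+T_2$, the use of Galerkin orthogonality together with the simultaneous $L^2$/$H^1$ approximant $\phi^{\DG}$ and elementwise integration by parts for $T_1$, and the lifting operator $\LL$ plus direct Cauchy--Schwarz treatment of the penalty and Taylor-remainder terms for $T_2$, with the $\eps$-robust weights $\alpha_\kappa$, $\beta_\kappa$ and the stability bound~\eqref{eq:stab} closing the argument. The plan is correct and matches the paper's proof in all essential respects.
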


\begin{remark}
Following along the lines of~\cite[\S4.4.2]{AmreinWihler:15} and~\cite{HoustonSchotzauWihler:07}, it is possible to prove local lower residual bounds in terms of the error indicators~$\eta_{\kappa}$, $\kappa\in\T$, and some data oscillation terms. In contrast to the $h$--version approach in~\cite{AmreinWihler:15}, however, the local efficiency bounds will be slightly suboptimally scaled with respect to the local polynomial degrees due to the need of applying $p$--dependent norm equivalence results (involving cut-off functions).
\end{remark}

\section{$hp$--Adaptive NDG Scheme}
\label{sec:adaptivity}

In this section, we will discuss how the \emph{a posteriori} bound from Theorem~\ref{thm:apost} can be exploited in the design of an $hp$--adaptive NDG algorithm for the numerical approximation of~\eqref{eq:PDE}. 

\subsection{$hp$--Adaptive Refinement Procedure}\label{sc:hpadapt}
In order to enrich the finite element space $\V$, we shall apply an 
$hp$--adaptive refinement algorithm which is based on the following two ingredients:

\subsubsection*{(a) Element marking:} 
Each element $\kappa$ in the computational mesh $\mathcal{T}$ may be marked for
refinement on the basis of the size of the local residual indicators $\eta_{\kappa,n}$,
cf. \eqref{eq:eta}, $n\geq 0$. To this end, several strategies, such as 
equidistribution, fixed fraction, D\"{o}rfler marking, optimized mesh criterion, and so on,
cf. \cite{HoustonSuli02}, for example, have been proposed within the literature.
For the purposes of this article, we employ the {\em maximal strategy}: here,
we refine the set of elements $\kappa\in \mathcal{T}$ which satisfy the
condition
\[
\eta_{\kappa,n} > \Upsilon \max_{\kappa\in\T} \eta_{\kappa,n},
\]
where $0< \Upsilon <1$ is a given parameter. On the basis of \cite{De07,opac-b1101124,Houston2016977},
throughout this article, we set $\Upsilon = \nicefrac13$.

\subsubsection*{(b) $hp$--Refinement criterion:} 
Once an element $\kappa\in {\mathcal T}$ has been marked for refinement,
a decision must be made regarding whether to subdivide the element
($h$--refinement) or to increase the local degree of the polynomial
approximation on element $\kappa$ ($p$--refinement). Several strategies have
been proposed within the literature; for a recent review of
$hp$--refinement algorithms, we refer to~\cite{MitchellMcClain:14}.
Here we employ the $hp$--refinement strategy developed in \cite{HoustonSuli:05} 
where the local regularity of the analytical solution is estimated on the
basis of truncated local Legendre expansions of the computed numerical solution,
cf., also, \cite{FankhauserWihlerWirz:14,EibnerMelenk:07}.

\subsection{Fully Adaptive Newton-Galerkin Method}
We now propose a procedure that provides an \emph{interplay} of the Newton linearisation and automatic $hp$--finite element mesh refinements based on the {\em a posteriori} residual estimate from Theorem~\ref{thm:apost} (as outlined in the previous Section~\ref{sc:hpadapt}). To this end, we make the assumption that the NDG sequence $\left\{\uDG_{n+1}\right\}_{n\ge 0} $ given by~\eqref{eq:dgfem} is well-defined as long as the iterations are being performed. 

\begin{algorithm}\label{al:full}
Given a (coarse) starting mesh $\mathcal{T}$ in~$\Omega$, with an associated (low-order) polynomial degree distribution~$\bm p$, and an initial guess $ \uDG_{0} \in  \V $. Set~$n\gets 0$.
\begin{algorithmic}[1]
\State
Determine the Newton step size parameter $\Delta t_n$ based on~$\uDG_n$ by the adaptive procedure from Algorithm~\ref{al:zs}; the Newton-Raphson transform $\NF(\uDG_n)$ required for the computation of the step size parameter~$\Delta t_n$ is approximated using the $hp$--DG method on the current mesh.
\State 
Compute the DG solution~$\utDG_{n+1}$ from~\eqref{eq:dgfem}, and~$\uDG_{n+1}=\utDG_{n+1}+(1-\Delta t_n) \uDG_n$. Furthermore,  evaluate the corresponding residual indicators $ \{\eta_{\kappa,n}\}_{\kappa\in\T} $, and $\delta_{n,\Omega} $ from~\eqref{eq:eta} and~\eqref{eq:delta1}--\eqref{eq:delta2}, respectively.  
\If {
\begin{equation}\label{eq:test}
\delta_{n,\Omega}^2\le \Lambda \sum_{\kappa\in\T}{\eta_{\kappa,n}^2}
\end{equation}
holds, for some given parameter~$\Lambda >0$,} {$hp$--refine the space $\V$ adaptively based on the marking criterion and the $hp$--strategy outlined in Section~\ref{sc:hpadapt}; go back to step~({\footnotesize\sc 1:}) with the new mesh~$\mathcal{T}$ (and based on the previously computed solution~$\uDG_{n+1}$ interpolated on the refined mesh).}
\Else{, i.e., if~\eqref{eq:test} is not fulfilled, then set~$n\leftarrow n+1$, and perform another Newton step by going back to~({\footnotesize\sc 1:}).}
\EndIf
\end{algorithmic}
\end{algorithm}

\begin{remark} 
We note that our computational experience suggests that the choice of the element marking strategy can directly affect the robustness of the NDG scheme, particularly, when the numerical solution is far away from a given	solution. Indeed, it is essential to employ a marking scheme which adaptively adjusts the number of elements marked for refinement at each step of the adaptive process; algorithms such as the fixed fraction method which only mark a fixed percentage of elements at each refinement level can lead
to slow convergence of the combined adaptive Newton-Galerkin approach. 
\end{remark}

\section{Numerical Experiments} \label{sec:numerics}

In this section we present a series of numerical experiments to demonstrate the 
practical performance of the proposed $hp$--adaptive refinement strategy 
outlined in Algorithm~\ref{al:full}. To this end, throughout this section
we select $\tau=0.1$ and $\gamma=0.5$ in Algorithm~\ref{al:zs}, the penalty
parameter $C_{\sigma}=10$ and $\theta=1$ (SIPG) in the interior penalty
DG scheme \eqref{eq:dgfem}, cf. \eqref{eq:a}, and
$\Lambda = 0.5$ in Algorithm~\ref{al:full}, cf.~\cite{AmreinWihler:15}.
Throughout this section we shall compare the performance of the
proposed $hp$--adaptive refinement strategy with the corresponding
algorithm based on exploiting only local mesh subdivision, i.e.,
$h$--refinement.
Furthermore, within each inner linear iteration, we employ the \emph{direct} MUltifrontal Massively Parallel Solver (MUMPS) \cite{MUMPS:2,MUMPS:1,MUMPS:3}; in particular,  in Theorem~\ref{thm:apost}, we do not take into account any linear algebra errors resulting from iterative solvers (cf., e.g., \cite{El-AlaouiErnVohralik:11}).

\begin{figure}[t!]
	\begin{center}
\includegraphics[scale=0.5]{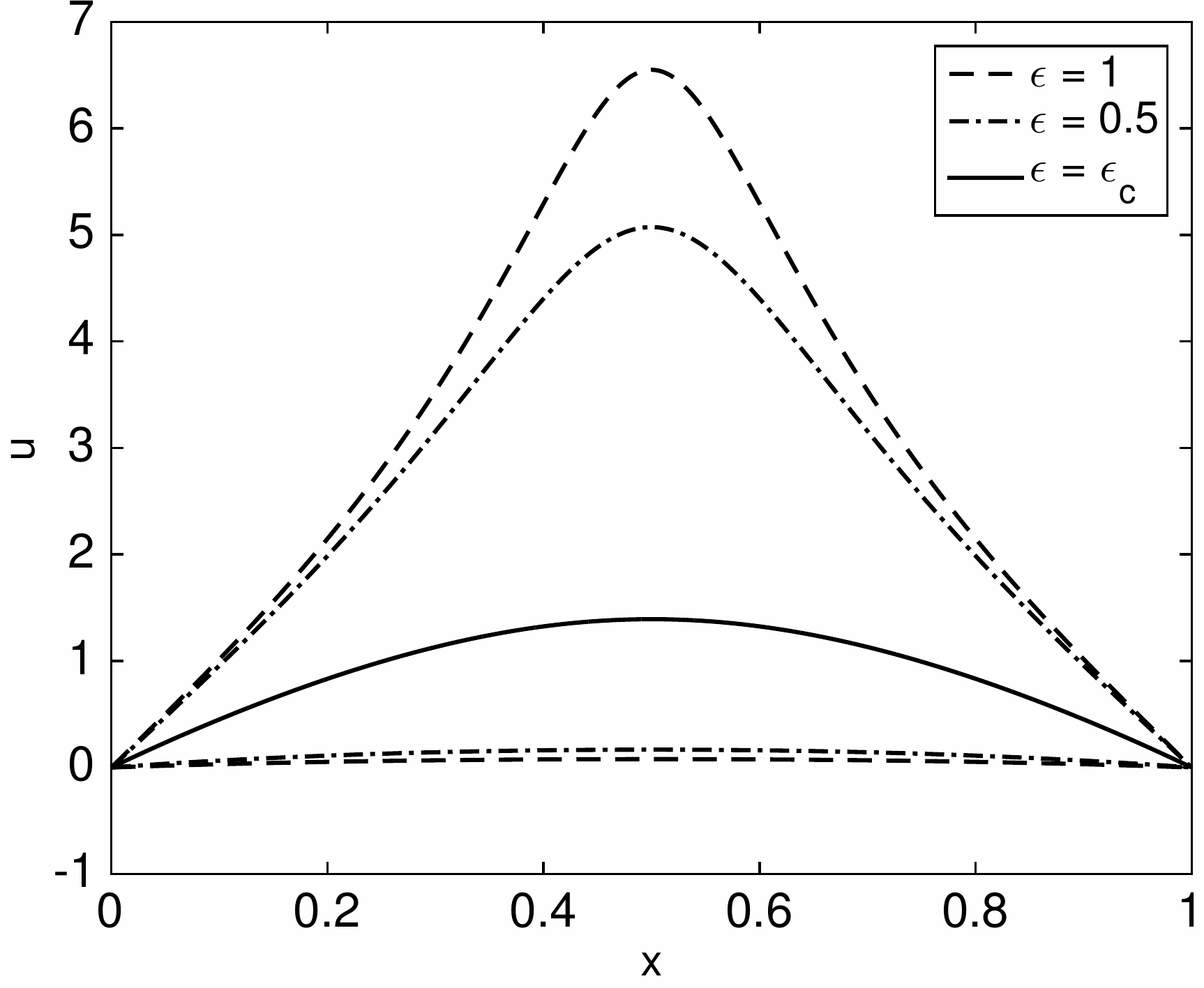} 
\end{center}
\caption{Bratu Problem. Slice at $y=0.5$, $0\leq x\leq 1$, of the upper 
and lower solutions computed 
with $\epsilon=1$ and $\epsilon = 0.5$, together with the critical solution
($\epsilon=\epsilon_c$).}
\label{fig:bratu_slices}
\end{figure}

\begin{figure}[t!]
	\begin{center}
	\begin{tabular}{cc}
\includegraphics[scale=0.19]{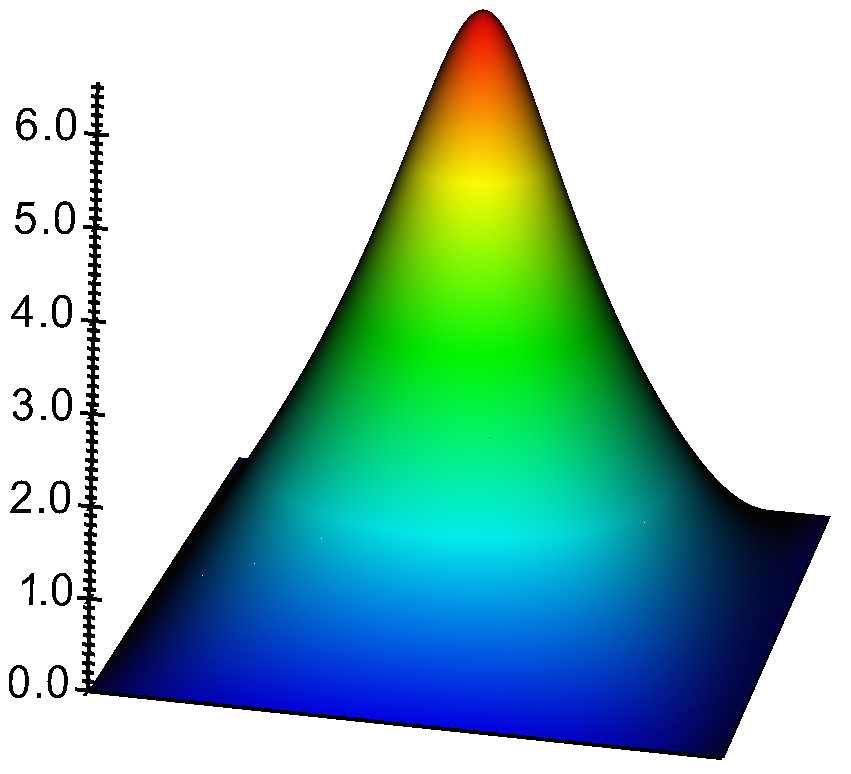} &
\includegraphics[scale=0.14]{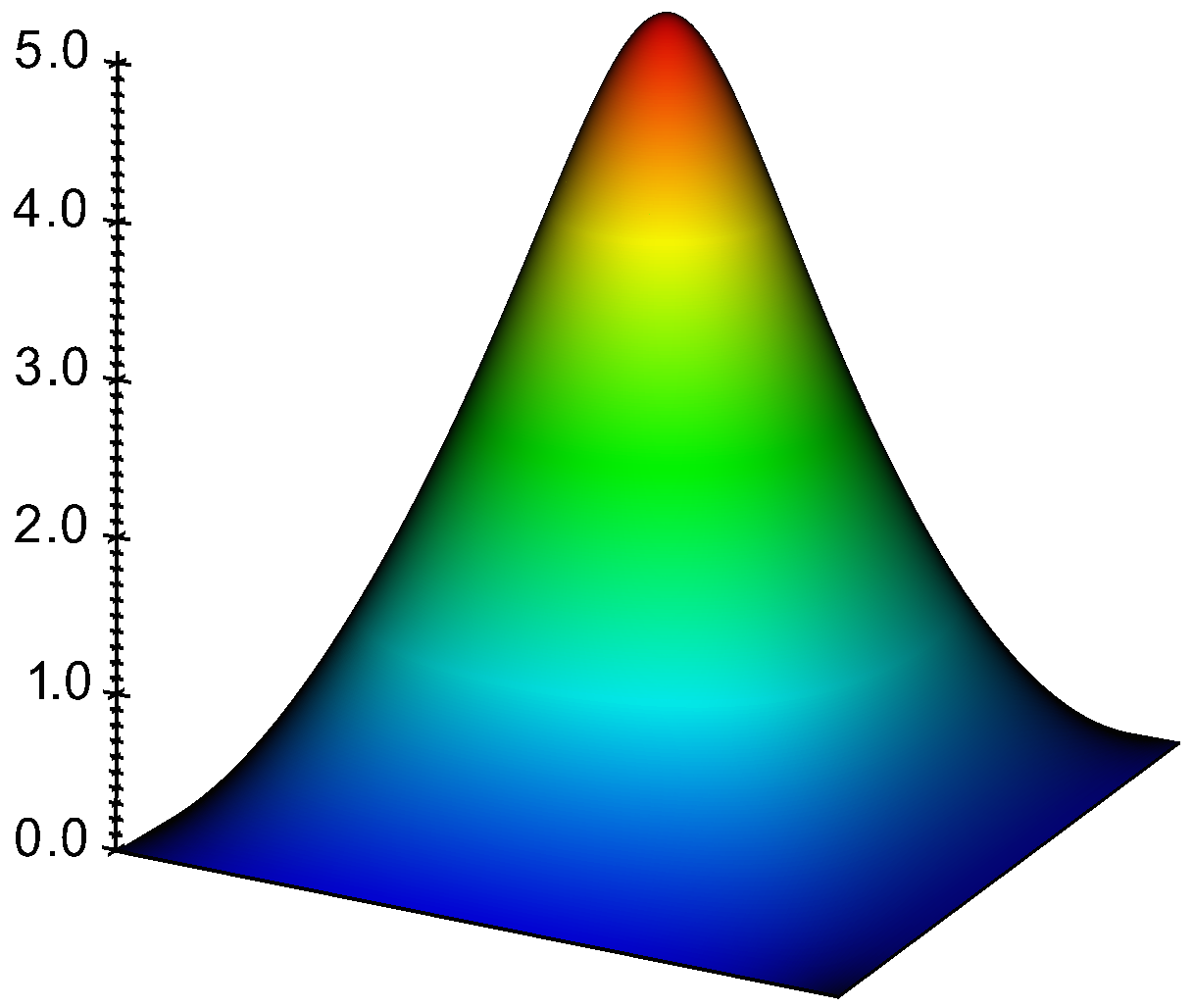} \\
(a) & (b)
\end{tabular}
\end{center}
\caption{Bratu Problem. Upper solution computed with:
(a) $\epsilon=1$; (b) $\epsilon = 0.5$.}
\label{fig:bratu_solutions}
\end{figure}

\begin{example}
In this first example, we consider the Bratu problem
\[
\eps\Delta u + {\rm e}^u = 0 \quad \mbox{ in } (0,1)^2,
\]
i.e., $f(u)={\rm e}^u+u$, subject to homogeneous Dirichlet boundary conditions on
$\partial\Omega$. Writing $\lambda = \nicefrac{1}{\epsilon}$, we recall that there
exists a critical parameter value $\lambda_c$  $(=1/\epsilon_c)$, such that for 
$\lambda > \lambda_c$ $(\epsilon < \epsilon_c)$ the problem has no solution, for 
$\lambda = \lambda_c$ $(\epsilon = \epsilon_c)$ there exists exactly one
solution, and for $\lambda < \lambda_c$ $(\epsilon > \epsilon_c)$ there are two solutions. 
In the one--dimensional
setting, an analytical expression for $\lambda_c$ is available, cf.
\cite{aschermattheijhrussell,CHHPSbratu,calvetti_2000}; for the two--dimensional case,
calculations have revealed that $\lambda_c = 6.808124423$ 
$(\epsilon_c=0.146883332)$ to 9 decimal places, 
see \cite{CHHPSbratu,mohsen08,mohsen_2014}, and the references cited therein.

\begin{figure}[t!]
	\begin{center}
		\begin{tabular}{cc}
\includegraphics[scale=0.38]{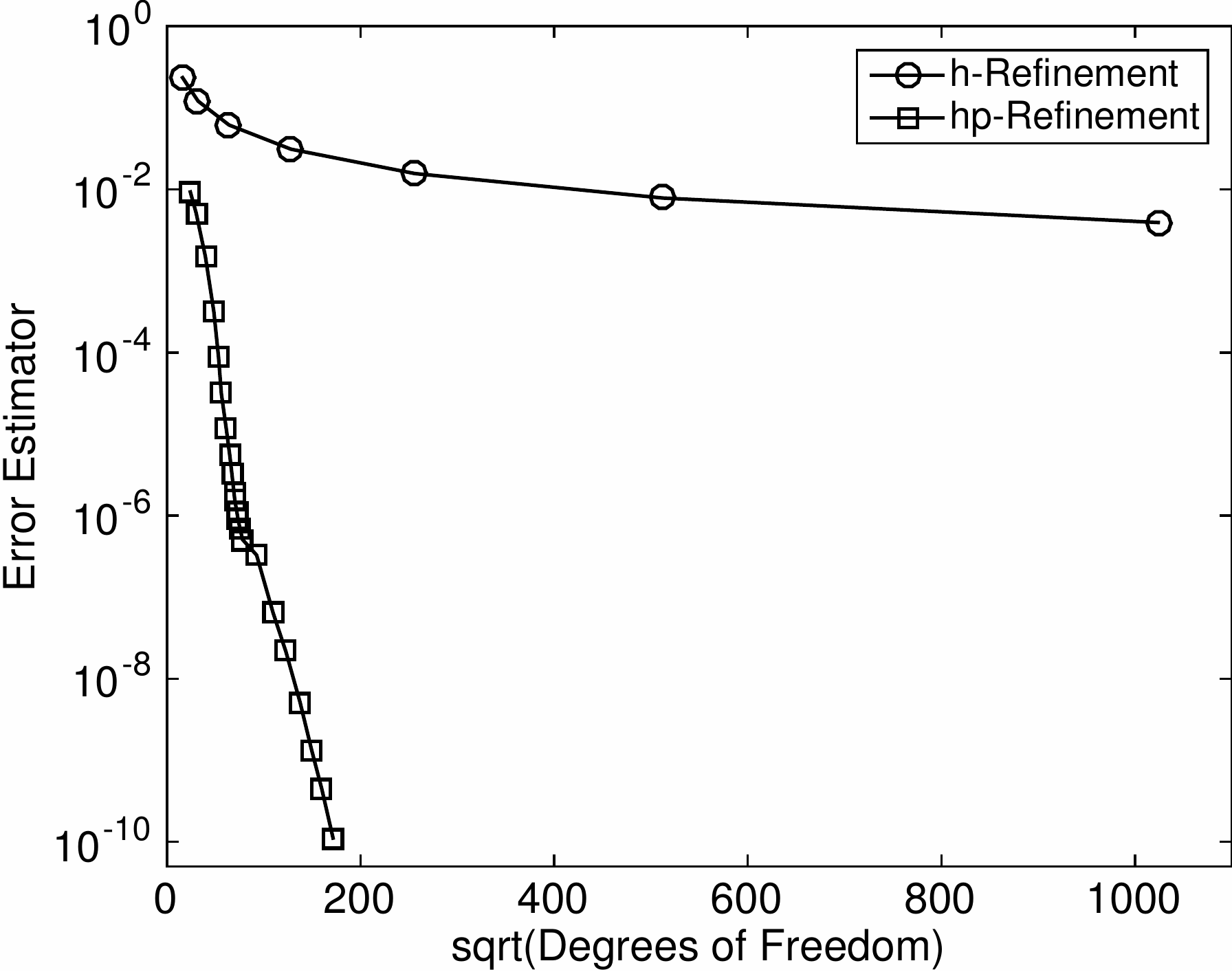} &
\includegraphics[scale=0.38]{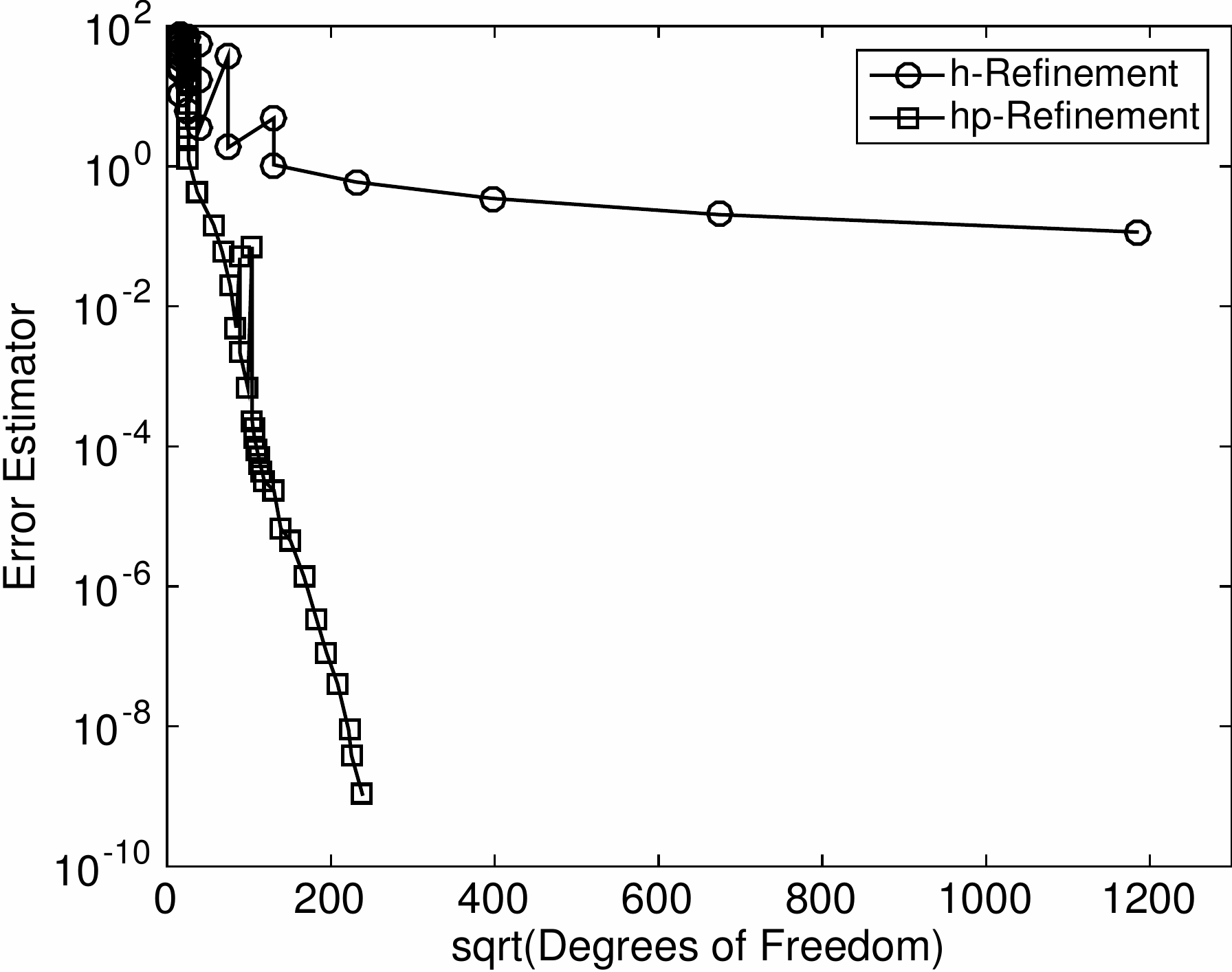} \\
(a) & (b) \\
~ & ~ \\
\includegraphics[scale=0.38]{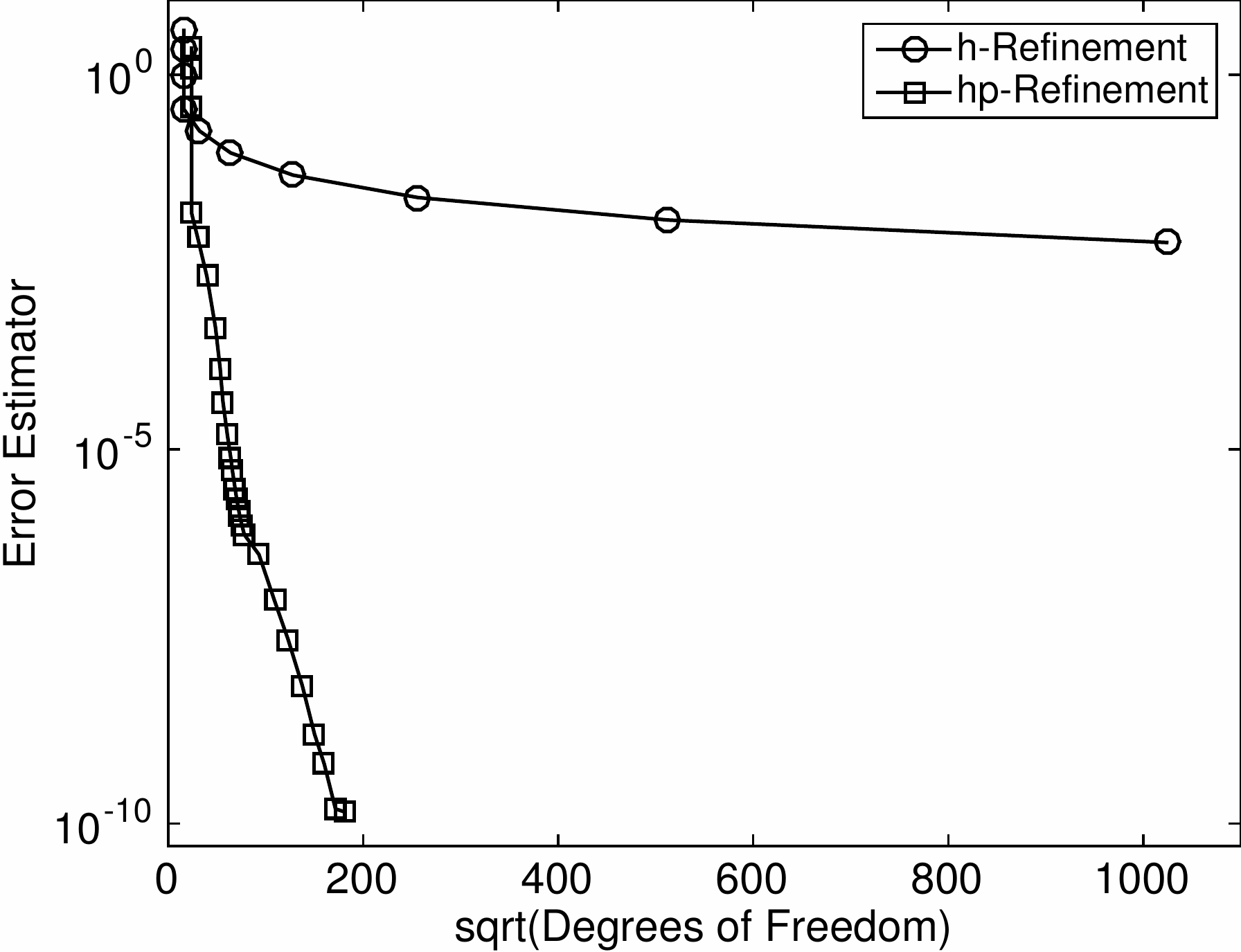} &
\includegraphics[scale=0.38]{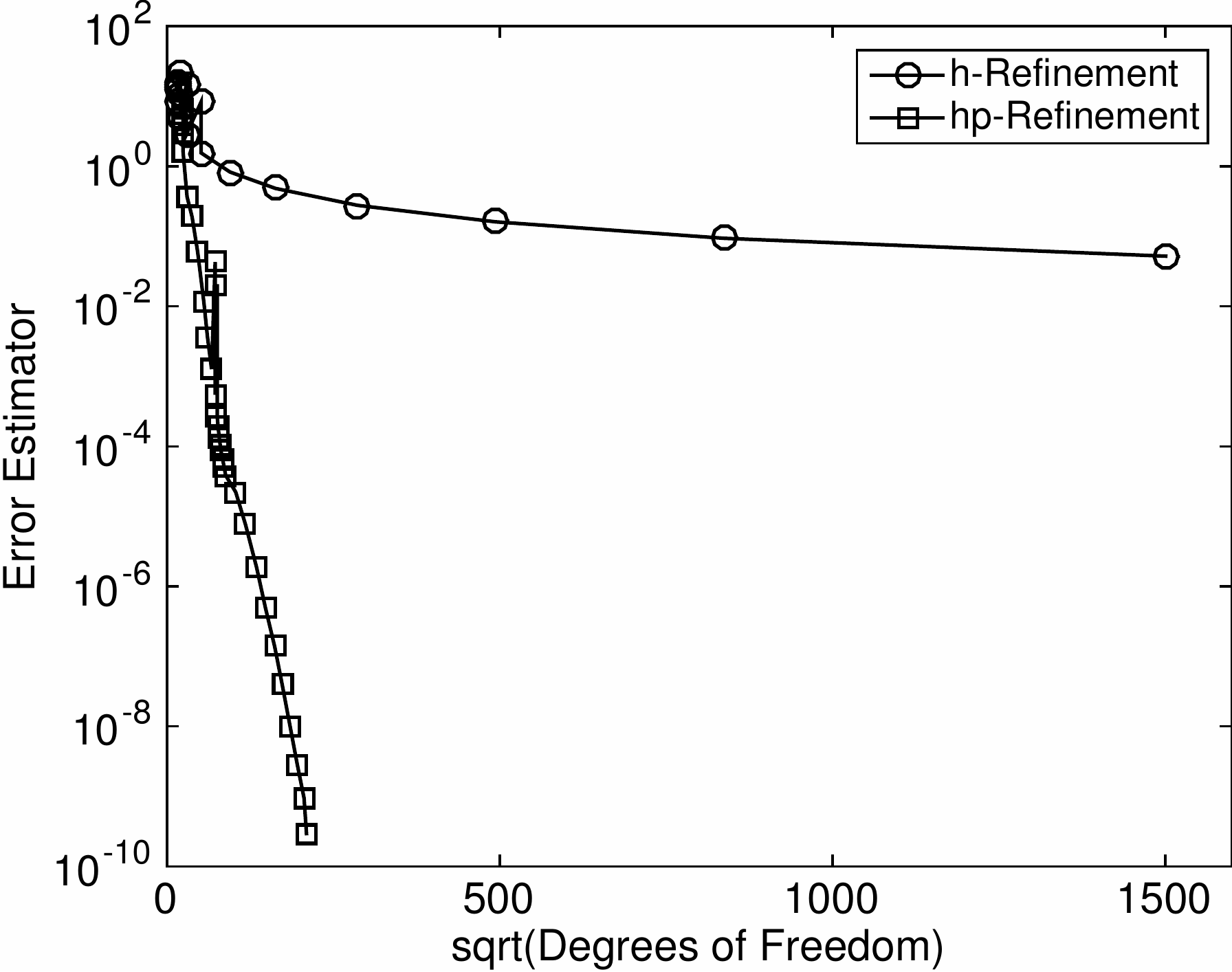} \\
(c) & (d) \\
~ & ~ \\
\multicolumn{2}{c}{\includegraphics[scale=0.38]{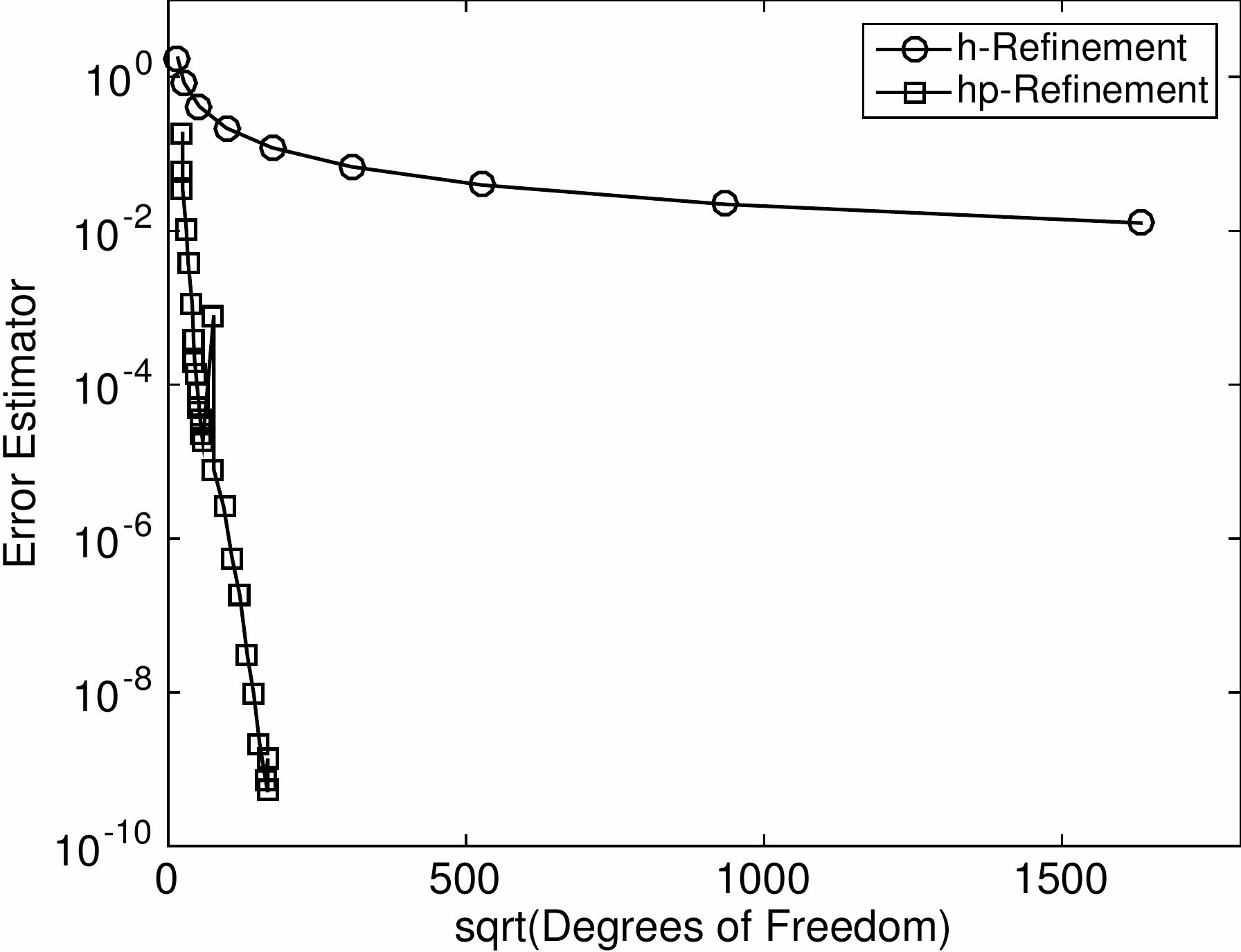}} \\
\multicolumn{2}{c}{(e)}
\end{tabular}
\end{center}
\caption{Bratu Problem. Comparison between $h$-- and $hp$--refinement.
(a)~$\epsilon=1$ (lower solution);
(b) $\epsilon=1$ (upper solution);
(c) $\epsilon=\nicefrac{1}{2}$ (lower solution);
(d) $\epsilon=\nicefrac{1}{2}$ (upper solution);
(e) $\epsilon=\epsilon_c$ (critical solution); }
\label{fig:bratu_convergence}
\end{figure}

\begin{figure}[t!]
	\begin{center}
		\begin{tabular}{cc}
\includegraphics[scale=0.38]{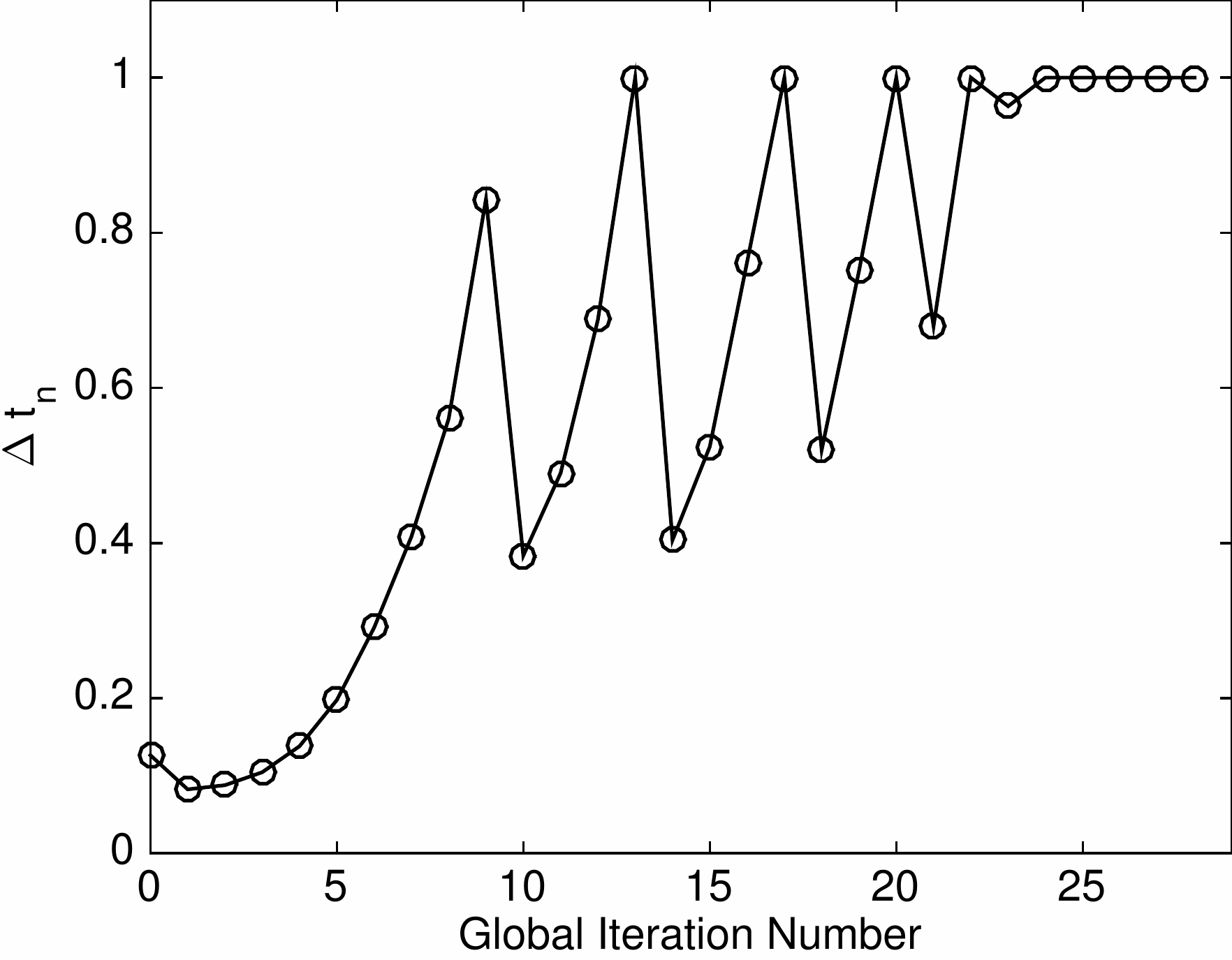} &
\includegraphics[scale=0.38]{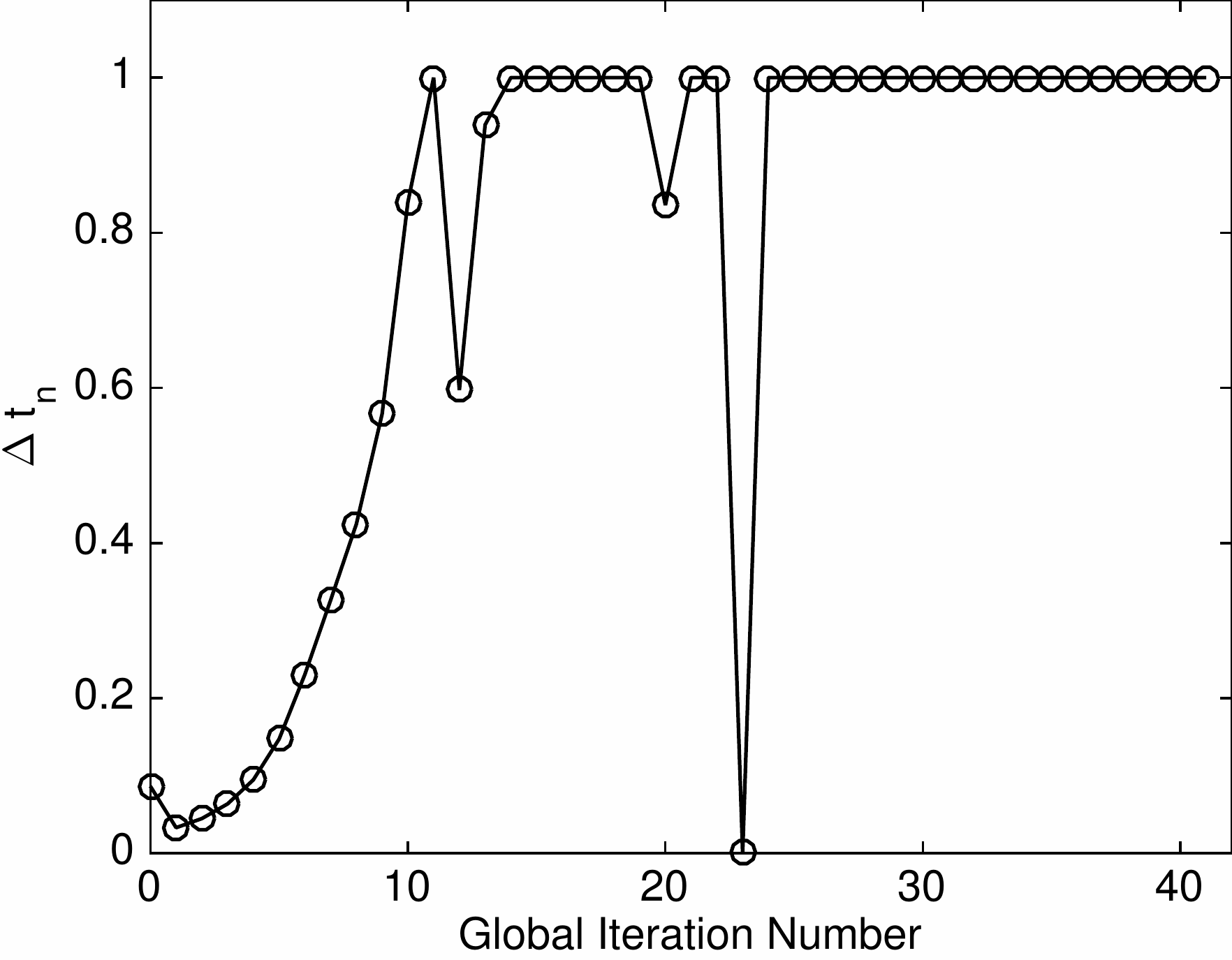} \\
(a) & (b) \\
~ & ~\\
\includegraphics[scale=0.38]{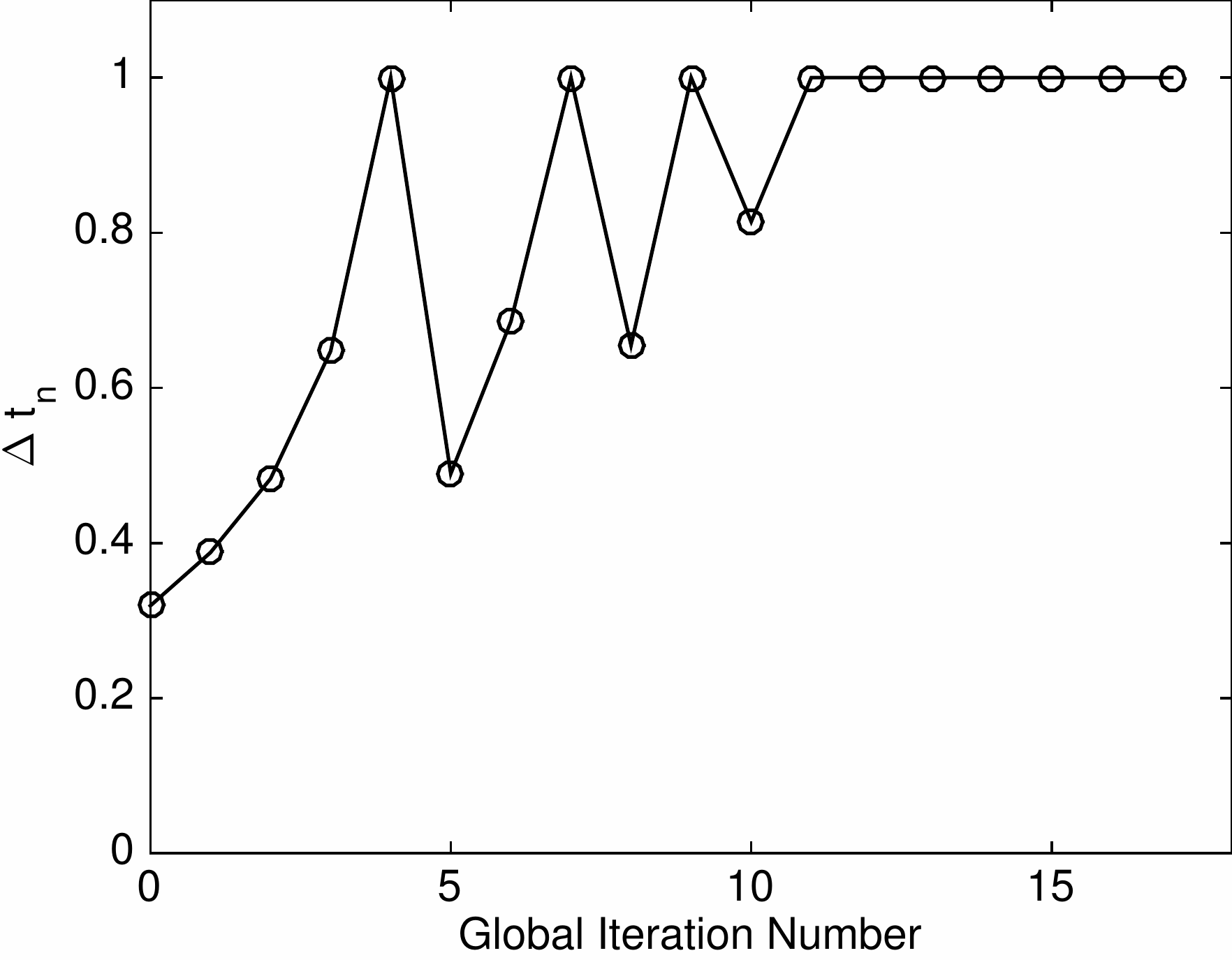} &
\includegraphics[scale=0.38]{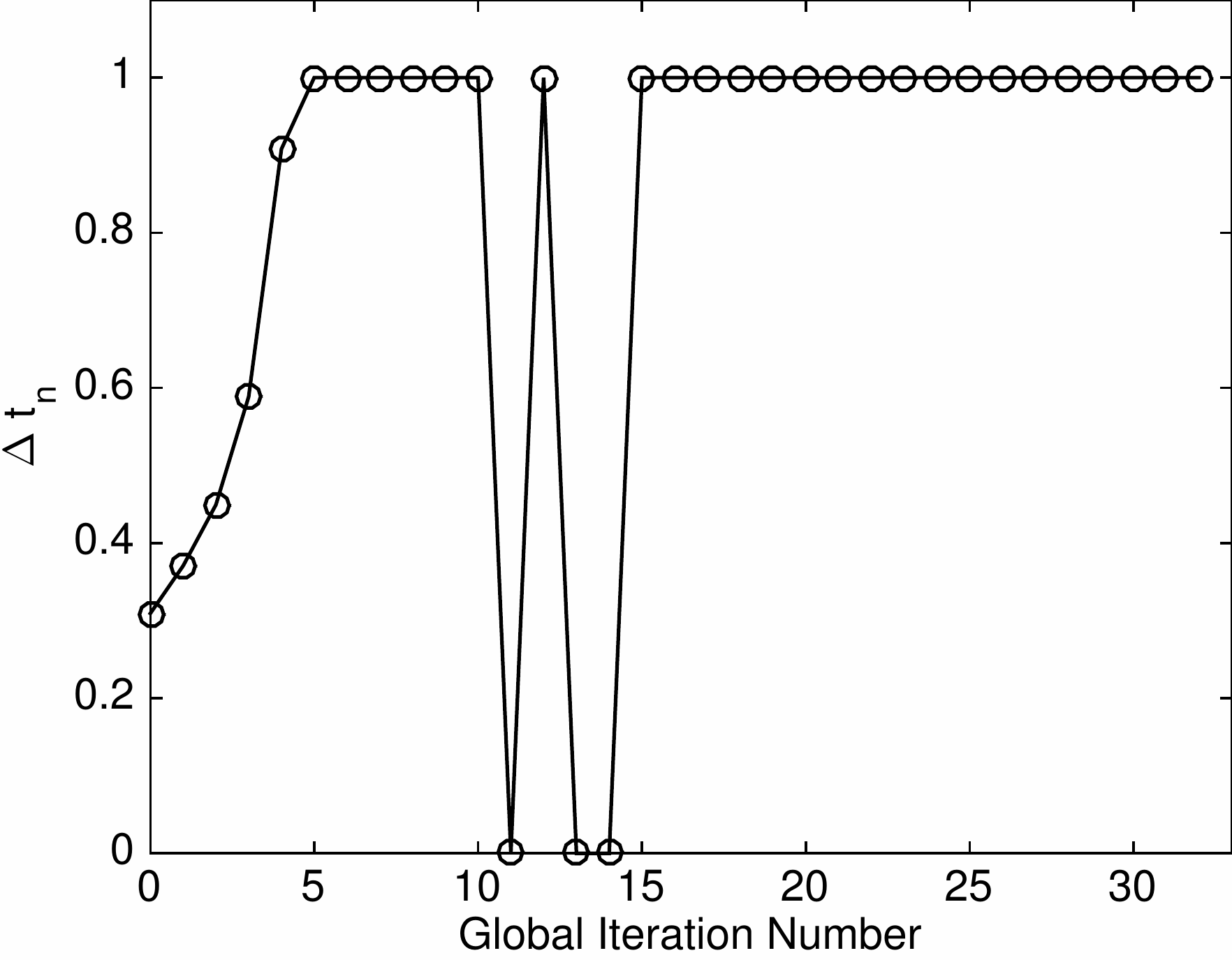} \\
(c) & (d) \\
~ & ~\\
\includegraphics[scale=0.38]{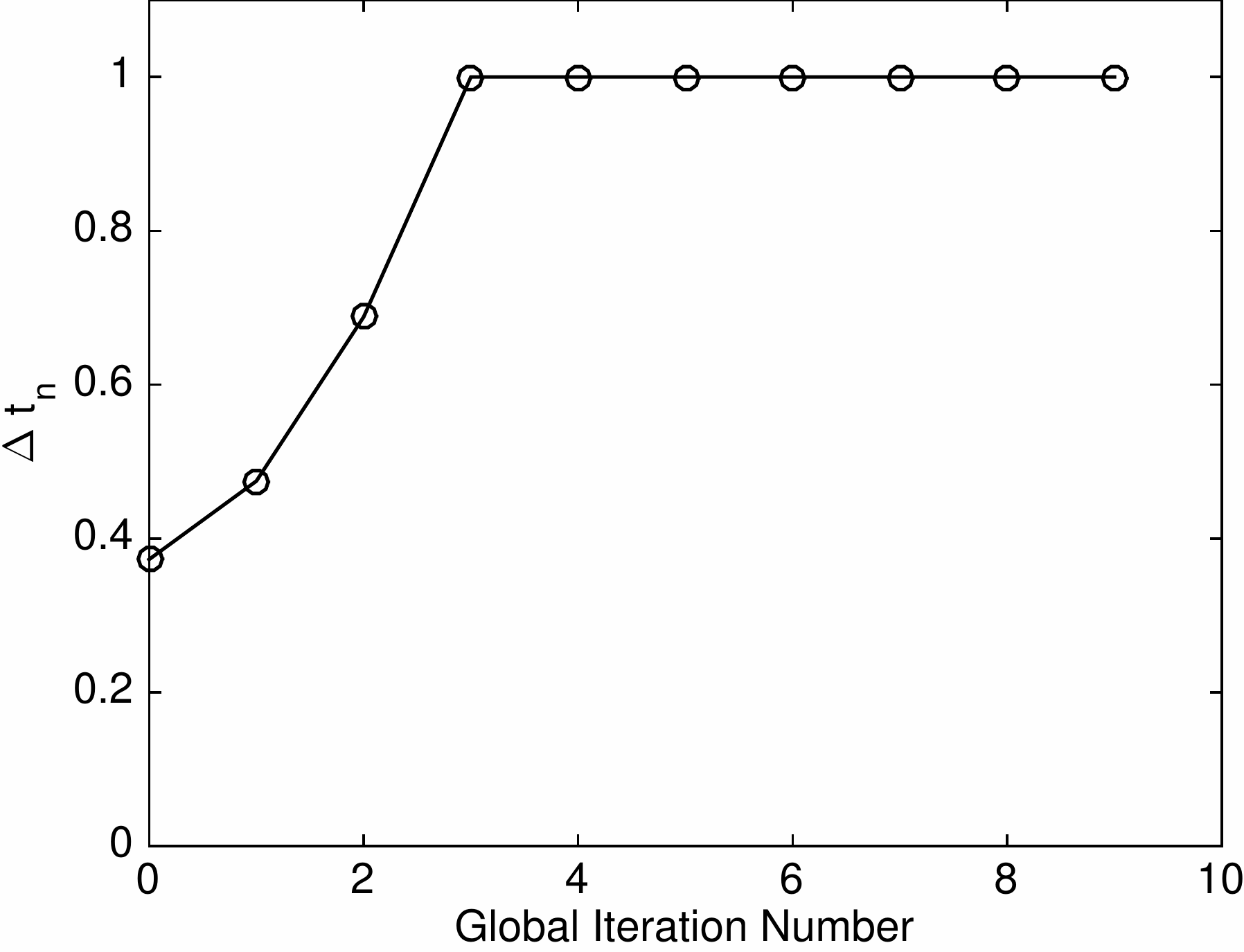} &
\includegraphics[scale=0.38]{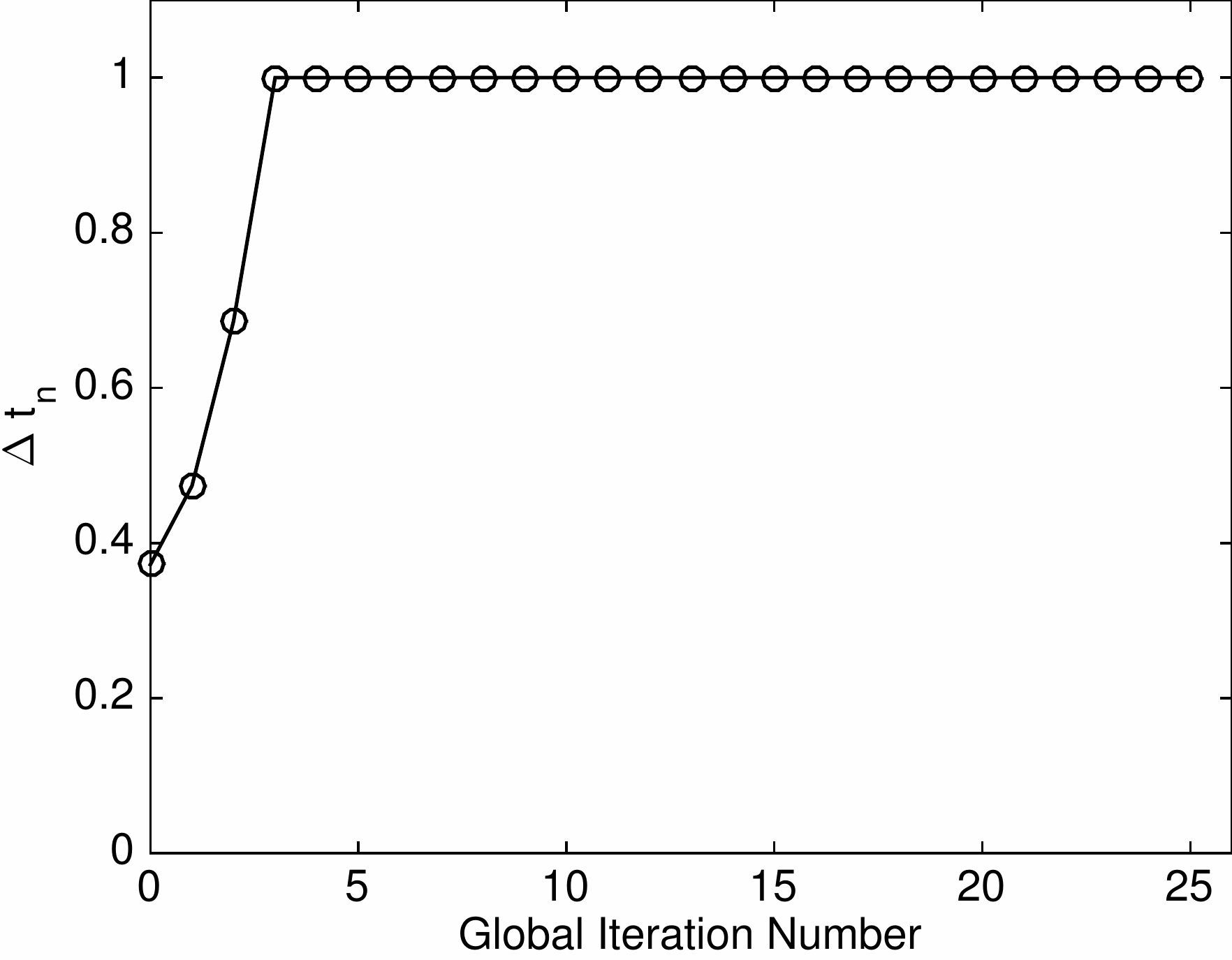} \\
(e) & (f) 
\end{tabular}
\end{center}
\caption{Bratu Problem. Damping parameter $\Delta t_n$.
Left: $h$--refinement; right: $hp$--refinement.
(a) \& (b) $\epsilon=1$ (upper solution);
(c) \& (d) $\epsilon=\nicefrac{1}{2}$ (upper solution);
(e) \& (f) $\epsilon=\nicefrac{1}{2}$ (lower solution).}
\label{fig:bratu_deltat}
\end{figure}

Following \cite{mohsen_2014}, we select the initial guess $ \uDG_{0} \in  \V $
to be the $L^2$--projection of the function $u_0$ onto $\V$, where
$$
u_0 = a \sin(\pi x)\sin(\pi y)
$$
and $a$ is a given amplitude. Noting that the maximum amplitude of the critical solution
computed with $\epsilon = \epsilon_c$ is approximately $1.39$, selecting $a$ to be 
smaller/larger than this value leads to convergence to the so--called
lower/upper solution, respectively. With this in mind we select
$a=2$ when $\epsilon = \epsilon_c$, $a\in\{\nicefrac{1}{10},6\}$ for 
$\epsilon = 1$, and $a\in\{1,4\}$ for $\epsilon=\nicefrac{1}{2}$; in the latter
two cases the smaller value of $a$ is employed for the computation
of the lower solution, while the larger value ensures convergence to
the upper solution.
In Figure~\ref{fig:bratu_slices} we plot a slice of each of the computed
numerical solutions at $y=0.5$, $0\leq x\leq 1$. Here, we observe that the
lower solutions tend to be rather flat in profile, while the upper solutions
have a stronger peak in the middle of the computational domain, cf.,
also, Figure~\ref{fig:bratu_solutions}.

\begin{figure}[t!]
	\begin{center}
	\begin{tabular}{cc}
\includegraphics[scale=0.4]{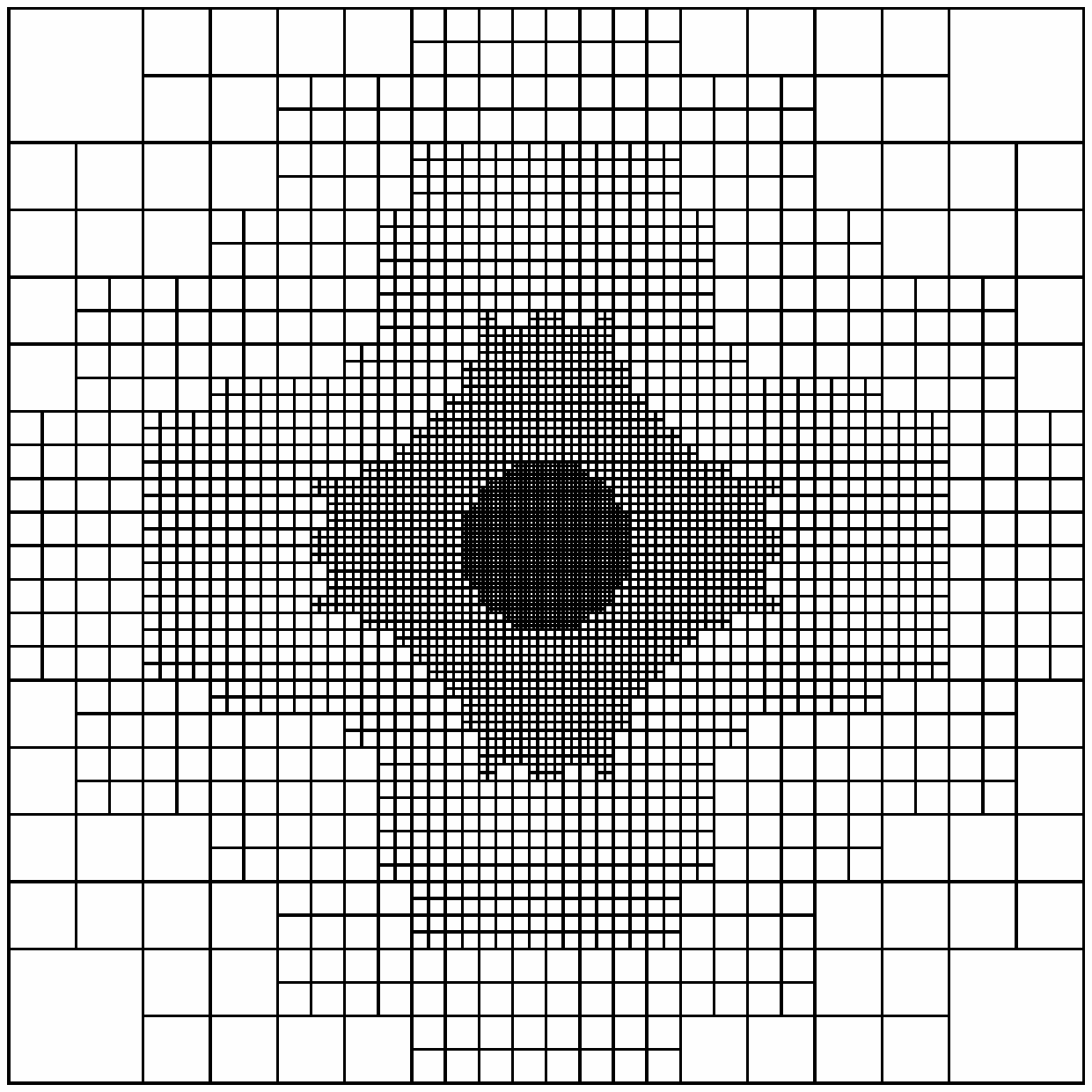} &
\includegraphics[scale=0.39]{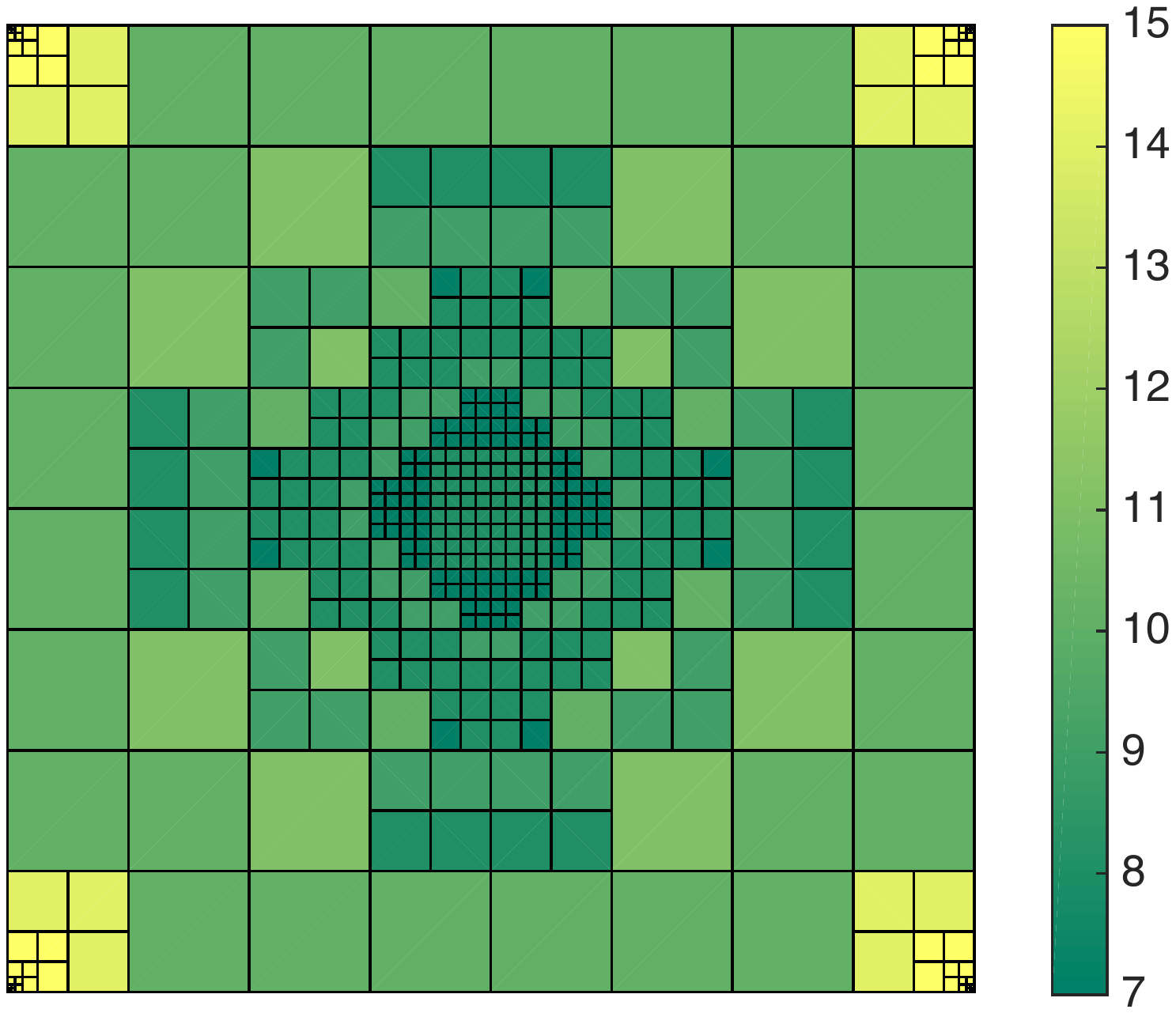} \\
(a) & (b) \\
\includegraphics[scale=0.4]{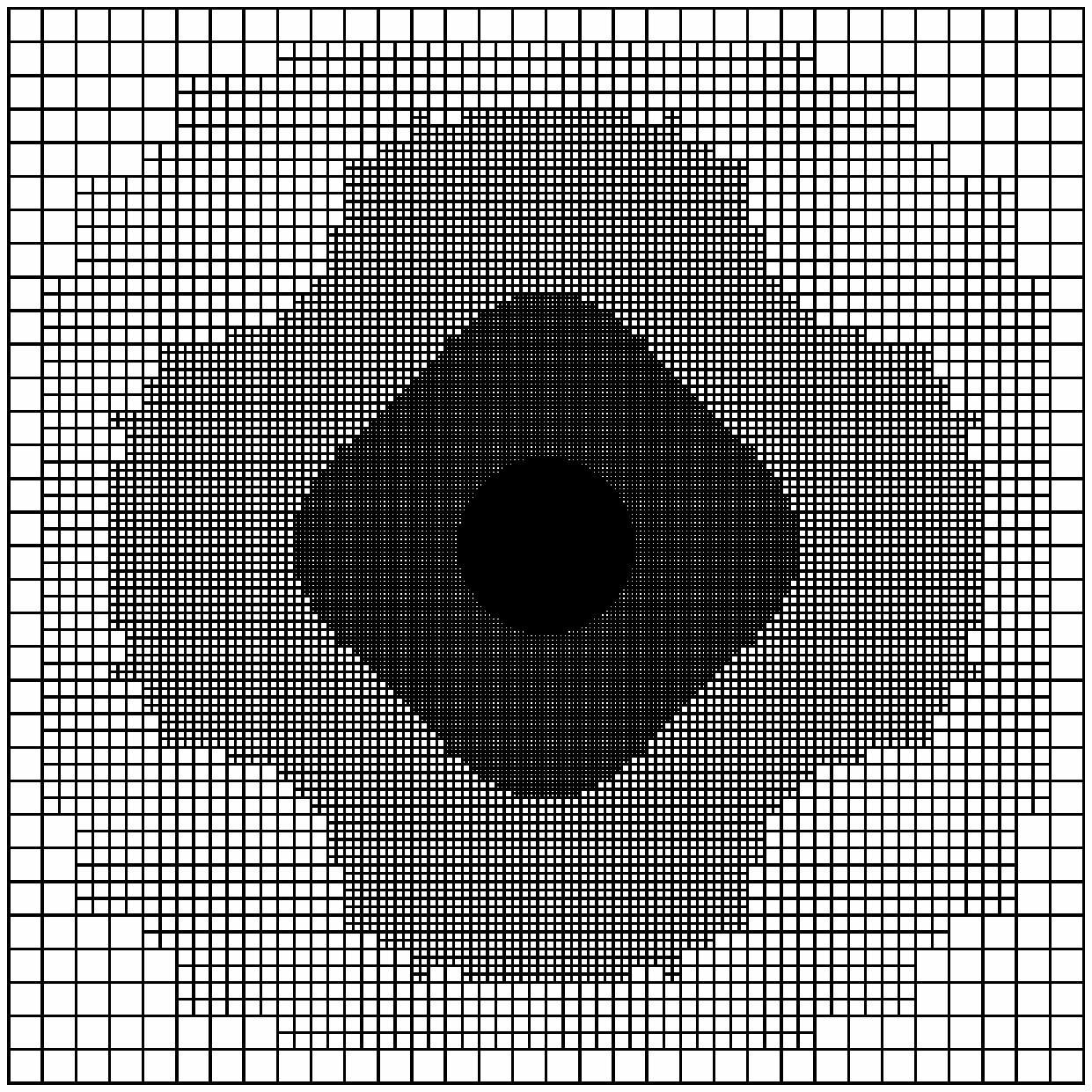} &
\includegraphics[scale=0.39]{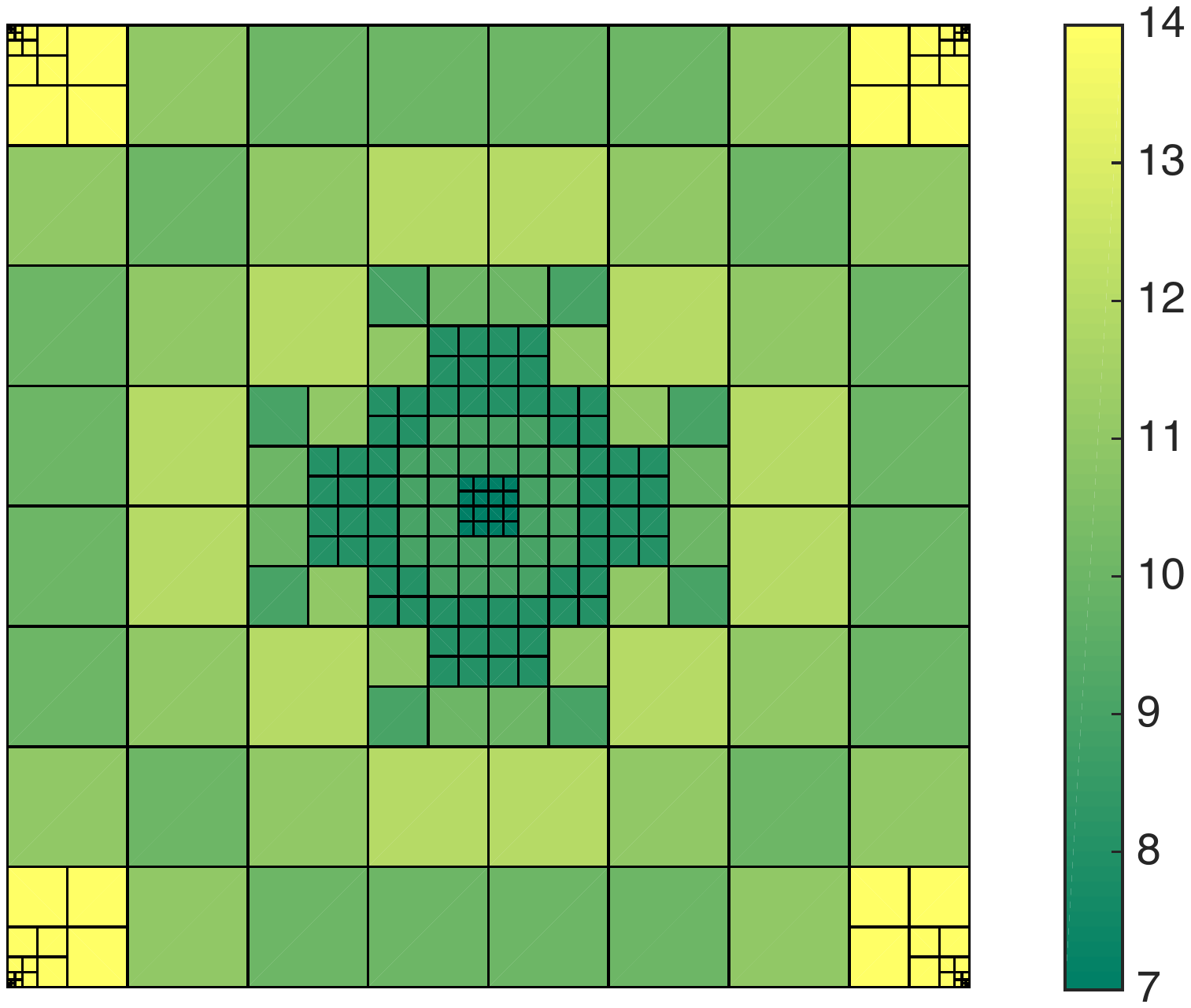} \\
(c) & (d) \\
\includegraphics[scale=0.4]{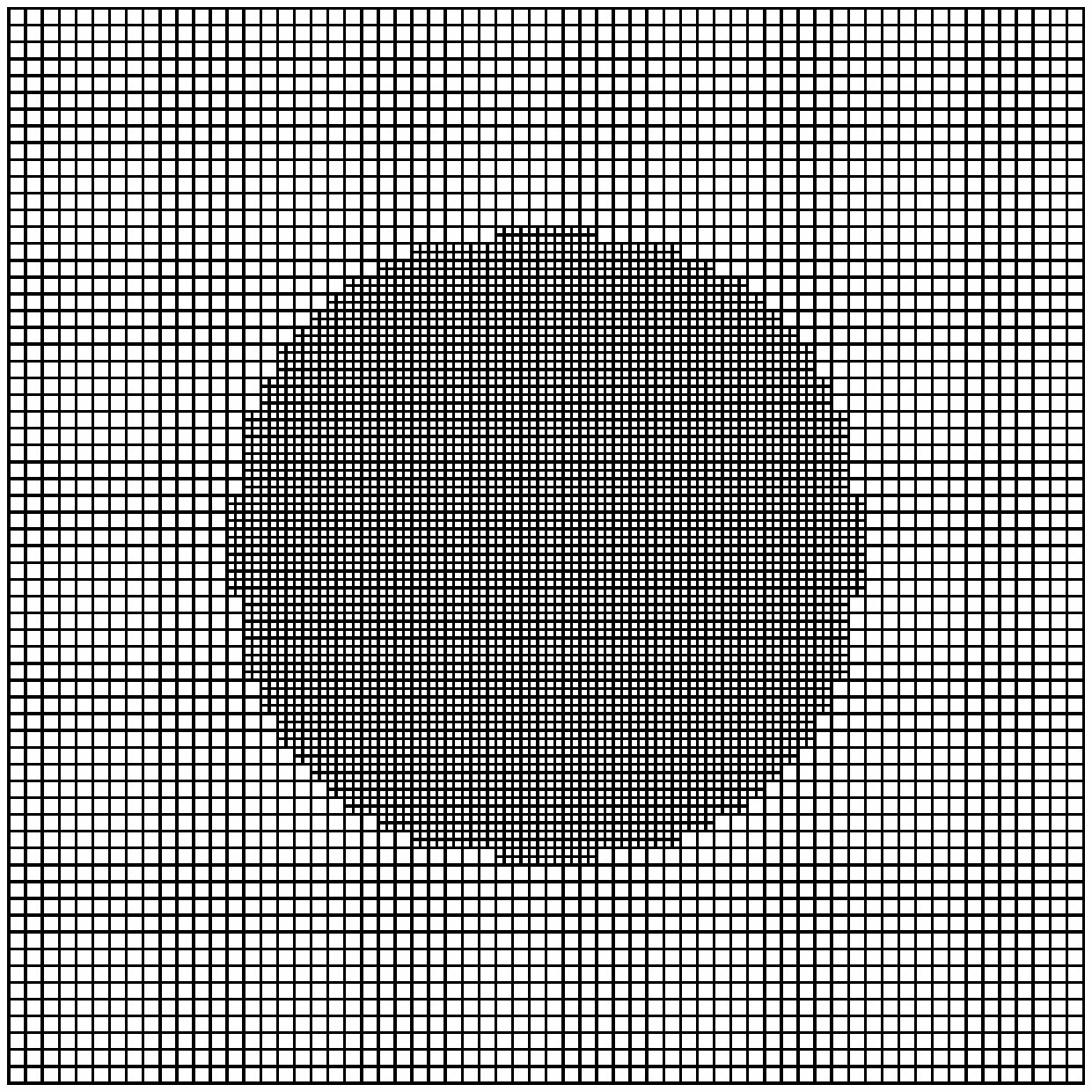} &
\includegraphics[scale=0.39]{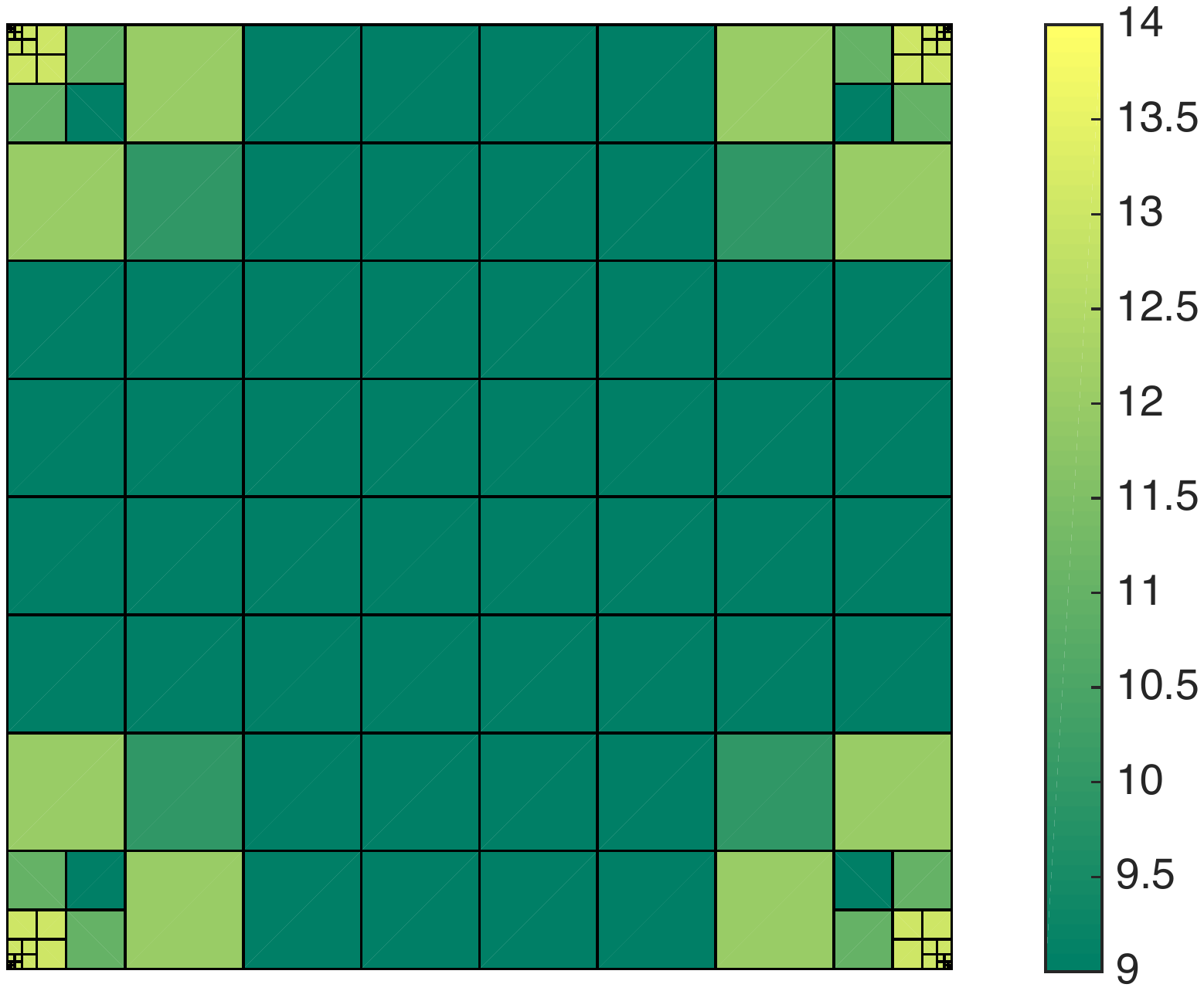} \\
(e) & (f)
\end{tabular}
\end{center}
\caption{Bratu Problem. Computational meshes.
Left: $h$--refinement; right: $hp$--refinement.
(a) \& (b) Upper solution computed with $\epsilon=1$; 
(c) \& (d) Upper solution computed with $\epsilon = 0.5$.
(e) \& (f) Critical solution.}
\label{fig:bratu_meshes}
\end{figure}

In Figure~\ref{fig:bratu_convergence} we demonstrate the performance of the
proposed $hp$--adaptive NDG algorithm, cf. Algorithm~\ref{al:full},
for the computation of the lower and upper solutions when
$\epsilon=1$ and $\epsilon=\nicefrac{1}{2}$, as well as for the numerical
approximation of the critical solution when $\epsilon = \epsilon_c$.
In each case we plot the residual estimator 
${\mathcal E}={\mathcal E}(\uDG_n,\uDG_{n+1},{\bm h},{\bm p})$ versus
the square root of the number of degrees of freedom in the finite element
space $\V$, based on employing both $h$-- and $hp$--refinement.
For each parameter value we observe that the $hp$--refinement algorithm
leads to an exponential decay of the residual estimator ${\mathcal E}$ as the
finite element space $\V$ is adaptively enriched: on a linear-log
plot, the convergence lines are roughly straight. Moreover, we observe the
superiority of $hp$--refinement in comparison with a standard 
$h$--refinement algorithm, in the sense that the former refinement strategy
leads to several orders of magnitude reduction in ${\mathcal E}$, 
for a given number of degrees of freedom, than the corresponding
quantity computed exploiting mesh subdivision only.

In Figure~\ref{fig:bratu_deltat} we plot the size of the Newton damping
$\Delta t_n$ versus the global iteration number. In many of the cases
considered here $\Delta t_n=1$ at all steps; for brevity, these results
have been omitted. For the cases presented in Figure~\ref{fig:bratu_deltat},
we observe that initially the damping parameter slowly increases when we are
far away from the solution; once the damping parameter is close to unity,
the condition 
$$
\delta_{n,\Omega}^2\le \Lambda \sum_{\kappa\in\T}{\eta_{\kappa,n}^2}
$$
in Algorithm~\ref{al:full} becomes fulfilled in which case the
finite element space $\V$ is adaptively enriched. In some cases,
particularly at the early stages of the algorithm, refinement
of $\V$ may then lead to a reduction in $\Delta t_n$, in which
case further Newton steps are required before the next refinement can be undertaken. As 
the iterates approach the solution more closely, the size of the damping parameter typically remains approximately~1.

Finally, in Figure~\ref{fig:bratu_meshes} we show the $h$-- and $hp$--refined
meshes generated for the numerical approximation of the upper solutions
when $\epsilon=1$ and $\epsilon=\nicefrac{1}{2}$, as well as for the
critical solution. Here we observe that when $h$--refinement is employed, 
the mesh is concentrated in the vicinity of the peak in the solution 
located at the centre of the computational domain, cf.
Figures~\ref{fig:bratu_slices} \&~\ref{fig:bratu_solutions}. 
In the $hp$--setting, we observe that while some mesh refinement
has been undertaken in the centre of the domain $\Omega$, the corners
of $\Omega$ have been significantly refined in order to resolve corner singularities typical for elliptic problems. Moreover, $p$--enrichement
has been employed both in these corner regions, as well as in the vicinity
of the peak in the computed solution. The corresponding meshes for the lower 
solutions are largely uniformly refined, due to the flat nature of the
solution; for brevity, these have been omitted.

\end{example}

\begin{figure}[t!]
	\begin{center}
	\begin{tabular}{cc}
\includegraphics[scale=0.12]{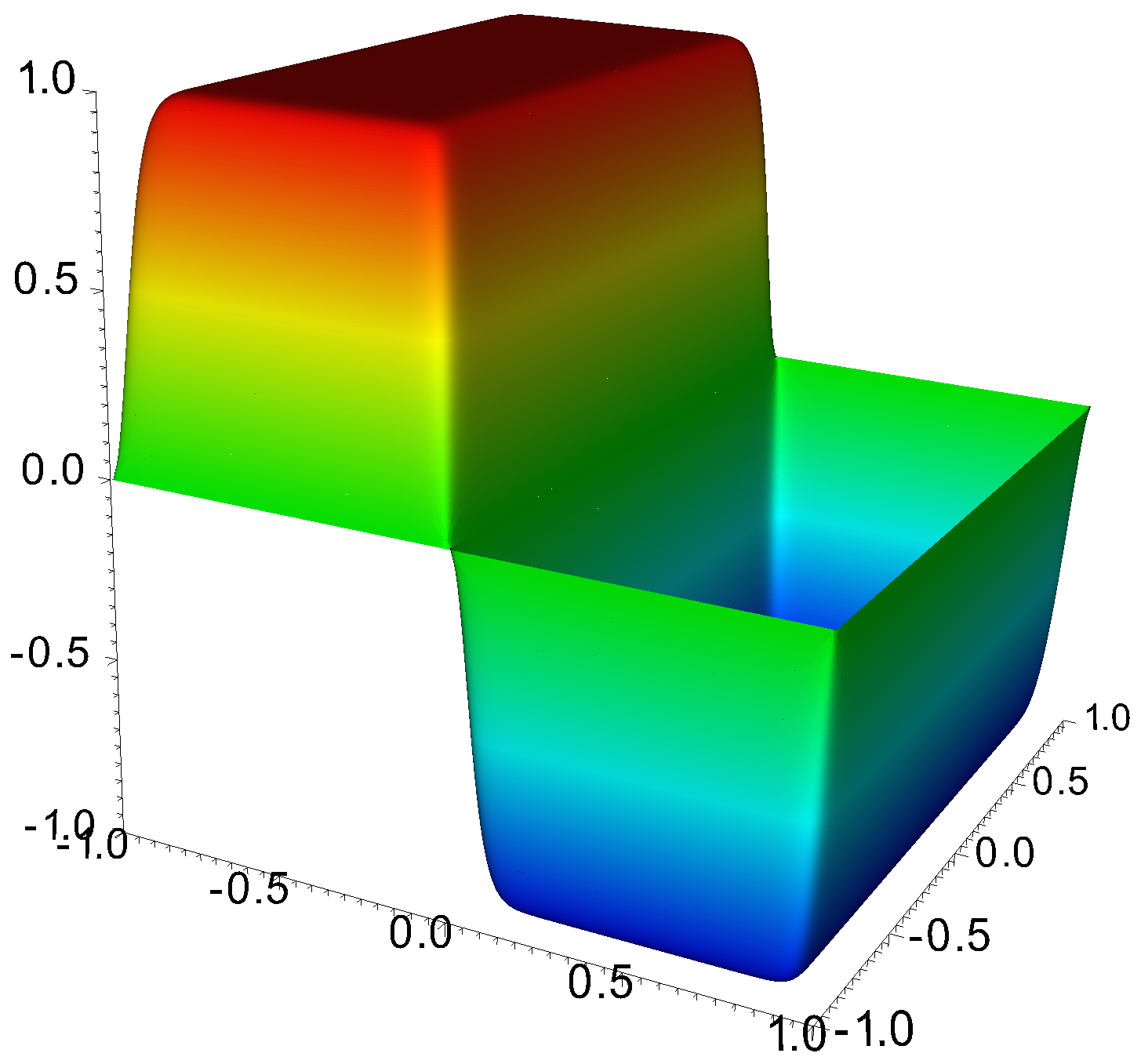} &
\includegraphics[scale=0.12]{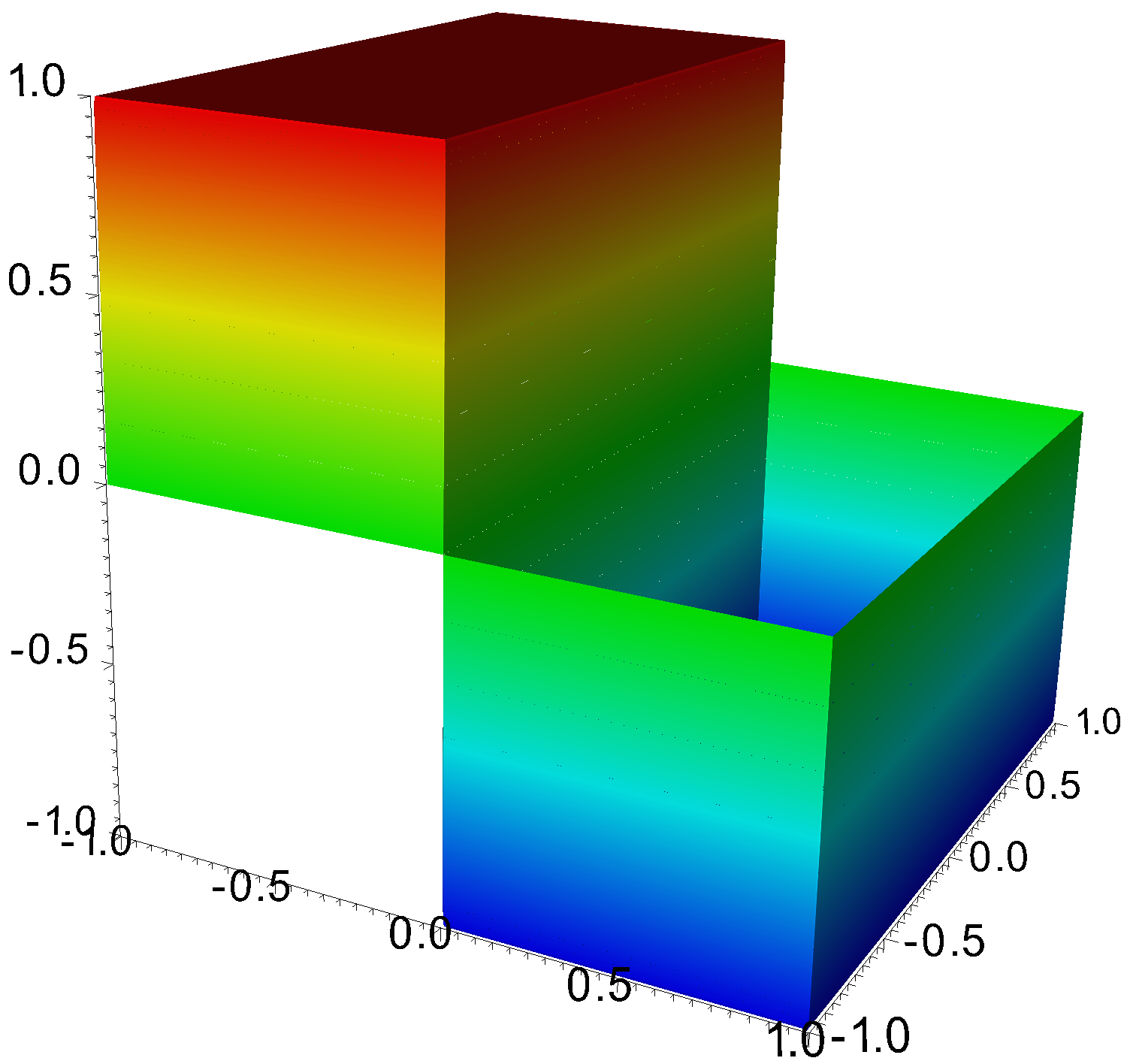} \\
(a) & (b)
\end{tabular}
\end{center}
\caption{Ginzburg-Landau equation. Solution computed with:
(a) $\epsilon=10^{-3}$; (b) $\epsilon = 10^{-6}$.}
\label{fig:ginzburg_solutions}
\end{figure}

\begin{example}
In this example, we consider the 
Ginzburg-Landau equation given by
\[
-\eps\Delta u+u=u(2-u^2) \quad \mbox{ in } (-1,1)^2,
\]
subject to homogeneous Dirichlet boundary conditions on $\partial\Omega$.
Following \cite{AmreinWihler:15}, we first note that $u\equiv 0$ is a 
solution; moreover, any solution $u$ appears in a pairwise fashion
as $-u$. In the absence of boundary conditions, it is clear that $u=\pm 1$
are solutions of the Ginzburg-Landau equation. Thereby, in the presence
of homogeneous Dirichlet boundary conditions, boundary layers will arise
in the vicinity of $\partial\Omega$, whose width will be governed by
the size of the diffusion coefficient $\epsilon$. Here,
we select the initial guess $ \uDG_{0} \in  \V $
to be the $L^2$--projection of the function $u_0(x,y) = -\mbox{sgn}(x)$ onto $\V$,
subject to the enforcement of the boundary conditions. In this case 
the solution to the Ginzburg-Landau equation will possess not only 
boundary layers, but also an internal layer along $x=0$;
in Figure~\ref{fig:ginzburg_solutions} we plot the solution computed
with both $\epsilon=10^{-3}$ and $\epsilon=10^{-6}$.

\begin{figure}[t!]
	\begin{center}
		\begin{tabular}{cc}
\includegraphics[scale=0.38]{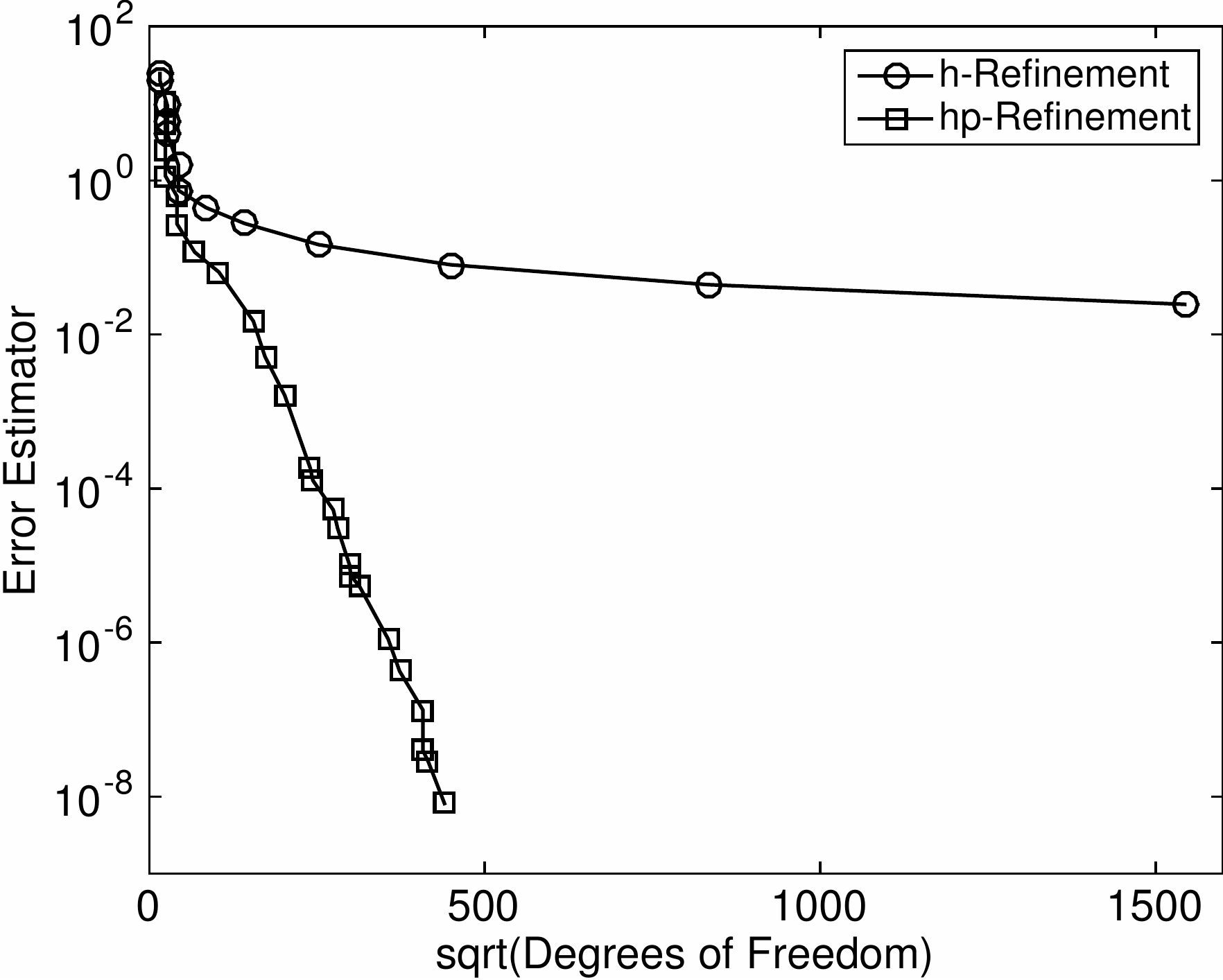} &
\includegraphics[scale=0.38]{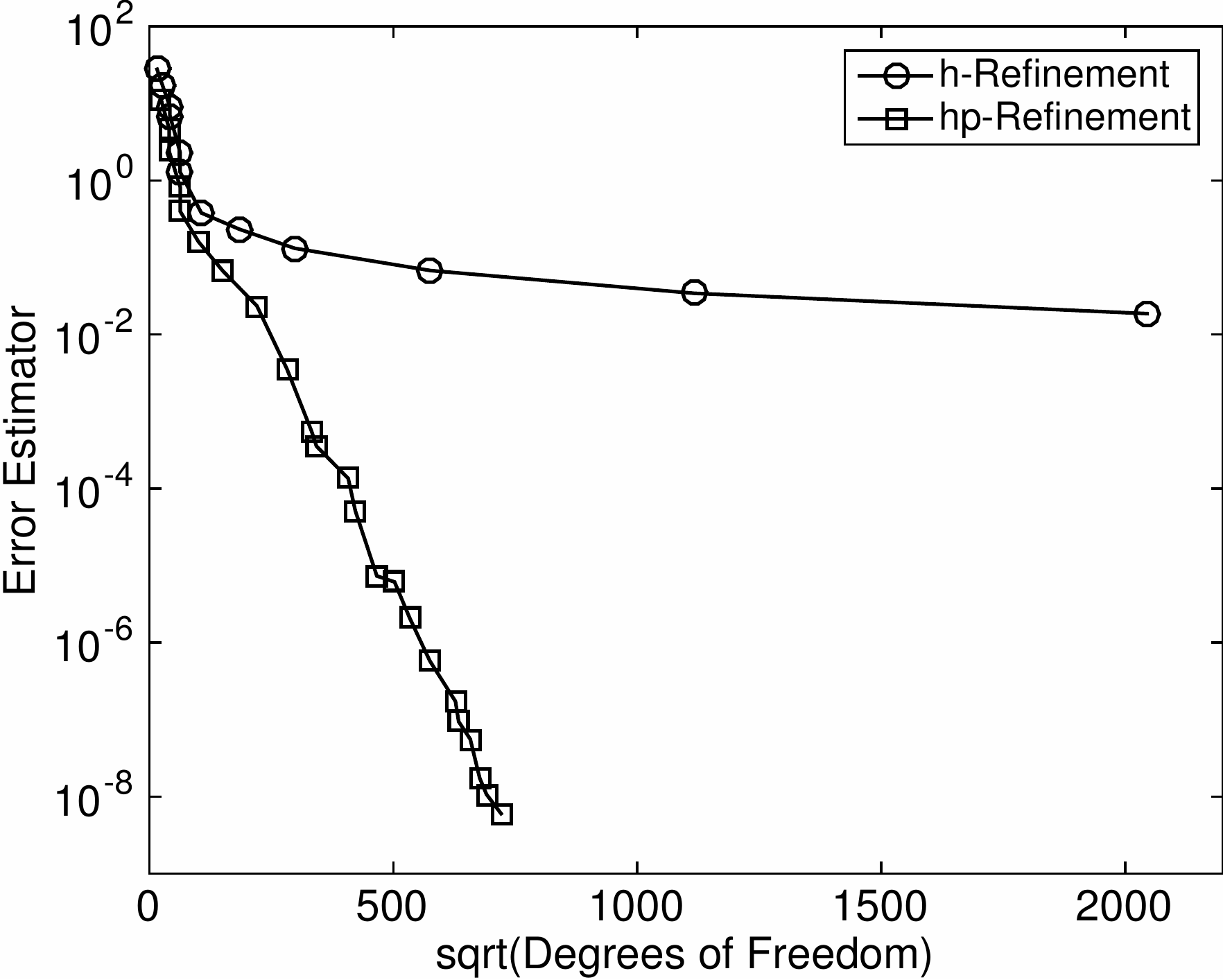} \\
(a) & (b) \\
~ & ~ \\
\includegraphics[scale=0.38]{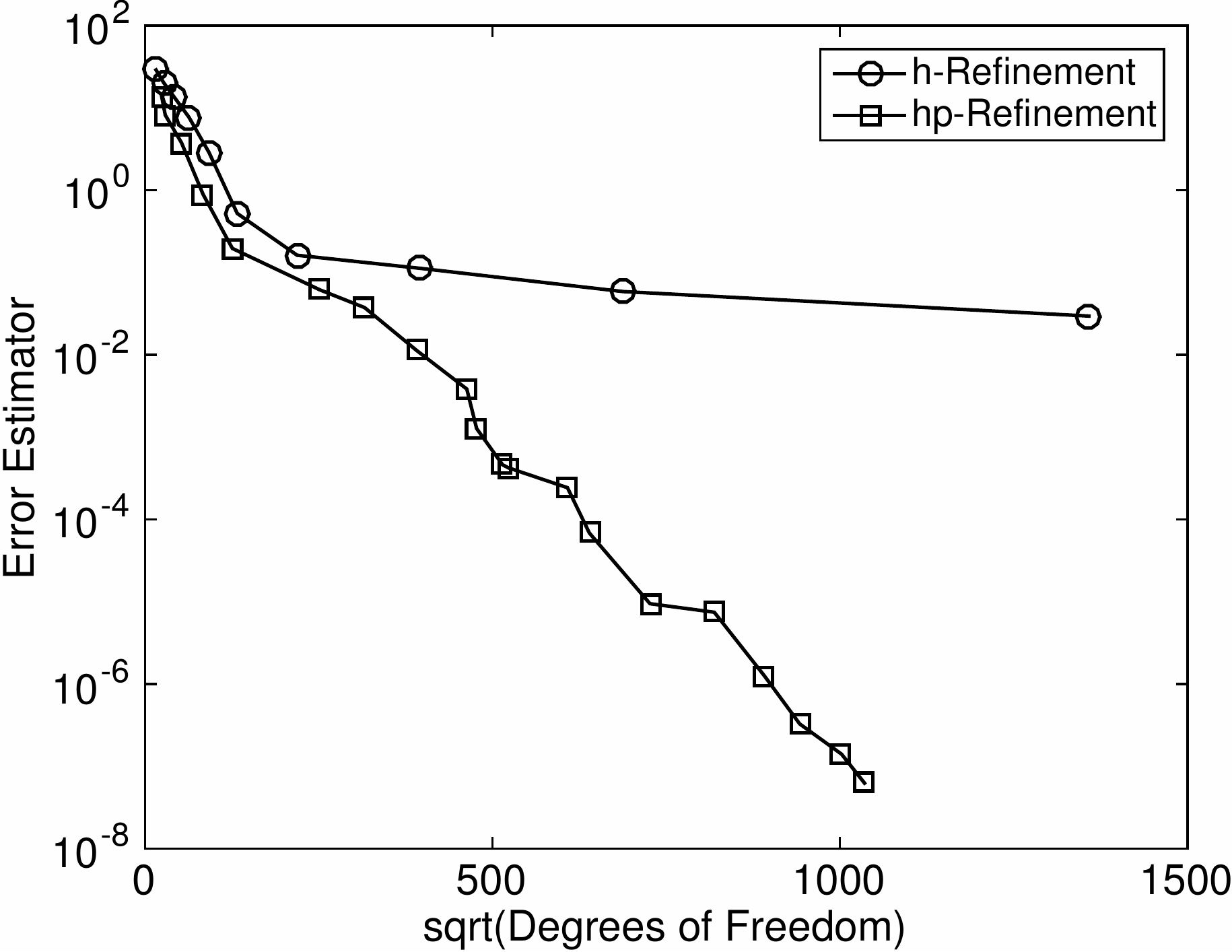} &
\includegraphics[scale=0.38]{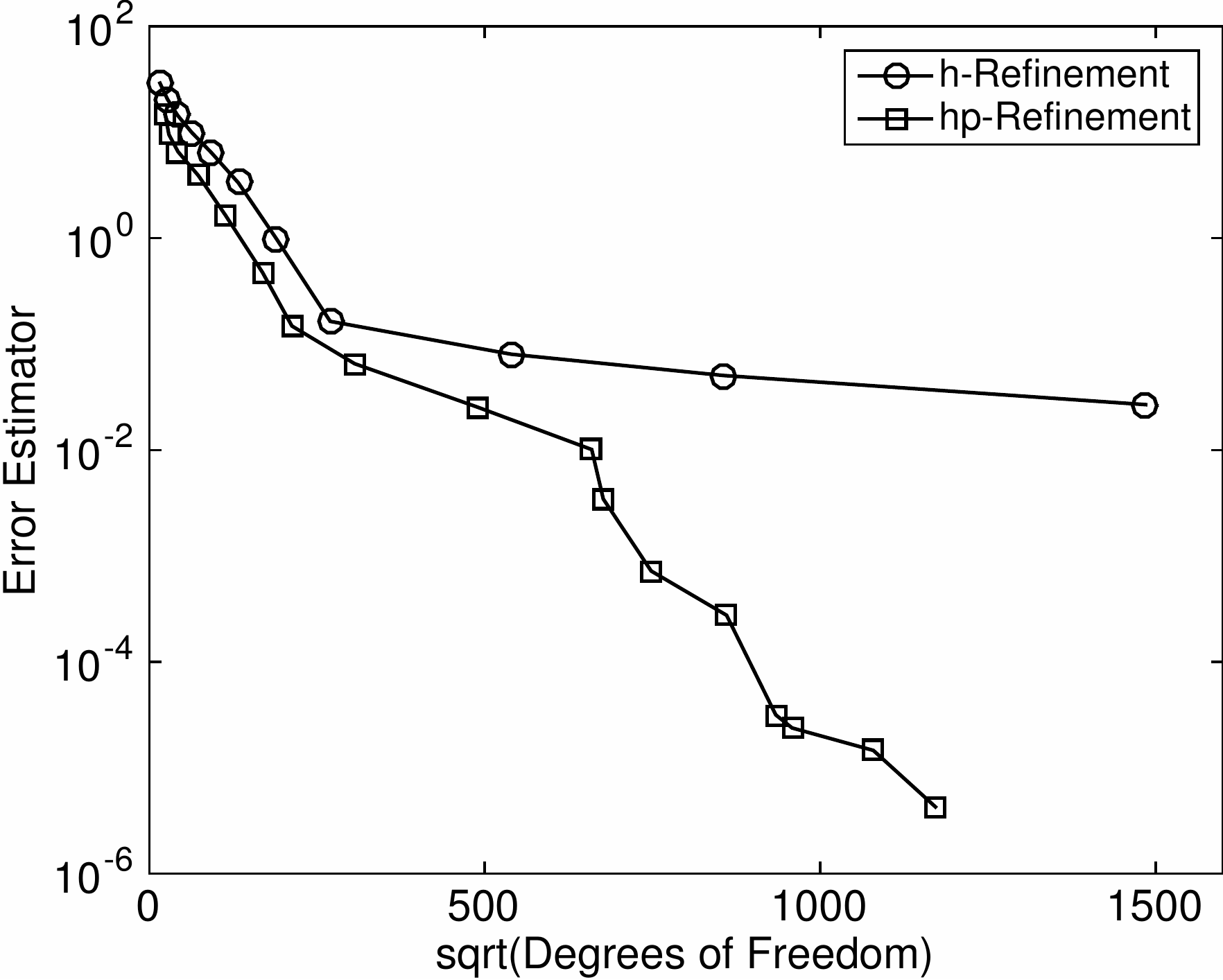} \\
(c) & (d) 
\end{tabular}
\end{center}
\caption{Ginzburg-Landau equation. Comparison between $h$-- and $hp$--refinement.
(a)~$\epsilon=10^{-3}$;
(b) $\epsilon=10^{-4}$;
(c) $\epsilon=10^{-5}$;
(d) $\epsilon=10^{-6}$.}
\label{fig:ginzburg_convergence}
\end{figure}

In Figure~\ref{fig:ginzburg_convergence} we demonstrate the performance of the
proposed $hp$--adaptive NDG algorithm, cf. Algorithm~\ref{al:full},
for the computation of the solution to the Ginzburg-Landau equation when
$\epsilon=10^{-3},~10^{-4},~10^{-5},~10^{-6}$.
In each case we plot the residual estimator 
${\mathcal E}$ versus
the square root of the number of degrees of freedom in the finite element
space $\V$, based on employing both $h$-- and $hp$--refinement.
For each value of $\epsilon$ we again observe that the $hp$--refinement algorithm
leads to an exponential decay of the residual estimator ${\mathcal E}$ as the
finite element space $\V$ is adaptively enriched. Moreover, we again
observe the superiority of exploiting $hp$--refinement in comparison with a standard 
$h$--refinement algorithm, in the sense that the former refinement strategy
leads to several orders of magnitude reduction in ${\mathcal E}$, 
for a given number of degrees of freedom, than the corresponding
quantity computed using $h$--refinement only. Furthermore, we note 
that as $\epsilon$ is reduced, additional $h$--enrichment of the computational
mesh is required before $p$--refinement is employed. Indeed, for $\epsilon=10^{-6}$
we observe that there is an initial transient, before the $hp$--version
convergence line becomes straight and exponential convergence is observed.

In Figure~\ref{fig:ginzburg_deltat_eps_m3} we plot
$\Delta t_n$ versus the global iteration number for $\epsilon=10^{-3}$;
for the other values of $\epsilon$ considered here, the damping parameter 
was close to one on all of the meshes considered. As in the previous
example, we again see an initial increase in $\Delta t_n$ as the
adaptive Newton algorithm proceeds, before the underlying mesh
is adaptively refined. Again, in the early stages of the algorithm,
enrichment of $\V$ may lead to some additional damping, before 
$\Delta t_n$ tends to one.

Finally, in Figure~\ref{fig:ginzburg_meshes} we plot the 
corresponding $h$-- and $hp$--meshes generated for 
$\epsilon=10^{-3}$ and $\epsilon=10^{-6}$. Here, we clearly observe
that the boundary and internal layers present in the analytical
solution are refined by our adaptive mesh adaptation strategy; in particular, we emphasise that the NDG iterates converge to a solution which features the same topology as the initial guess, and, hence, does not switch between various attractors (corresponding to different solutions; see, e.g, \cite{AmreinWihler:15}). In the $hp$--setting, we see that once the $h$--mesh has been sufficiently refined, then $p$--enrichment is employed.

\begin{figure}[t!]
	\begin{center}
		\begin{tabular}{cc}
\includegraphics[scale=0.38]{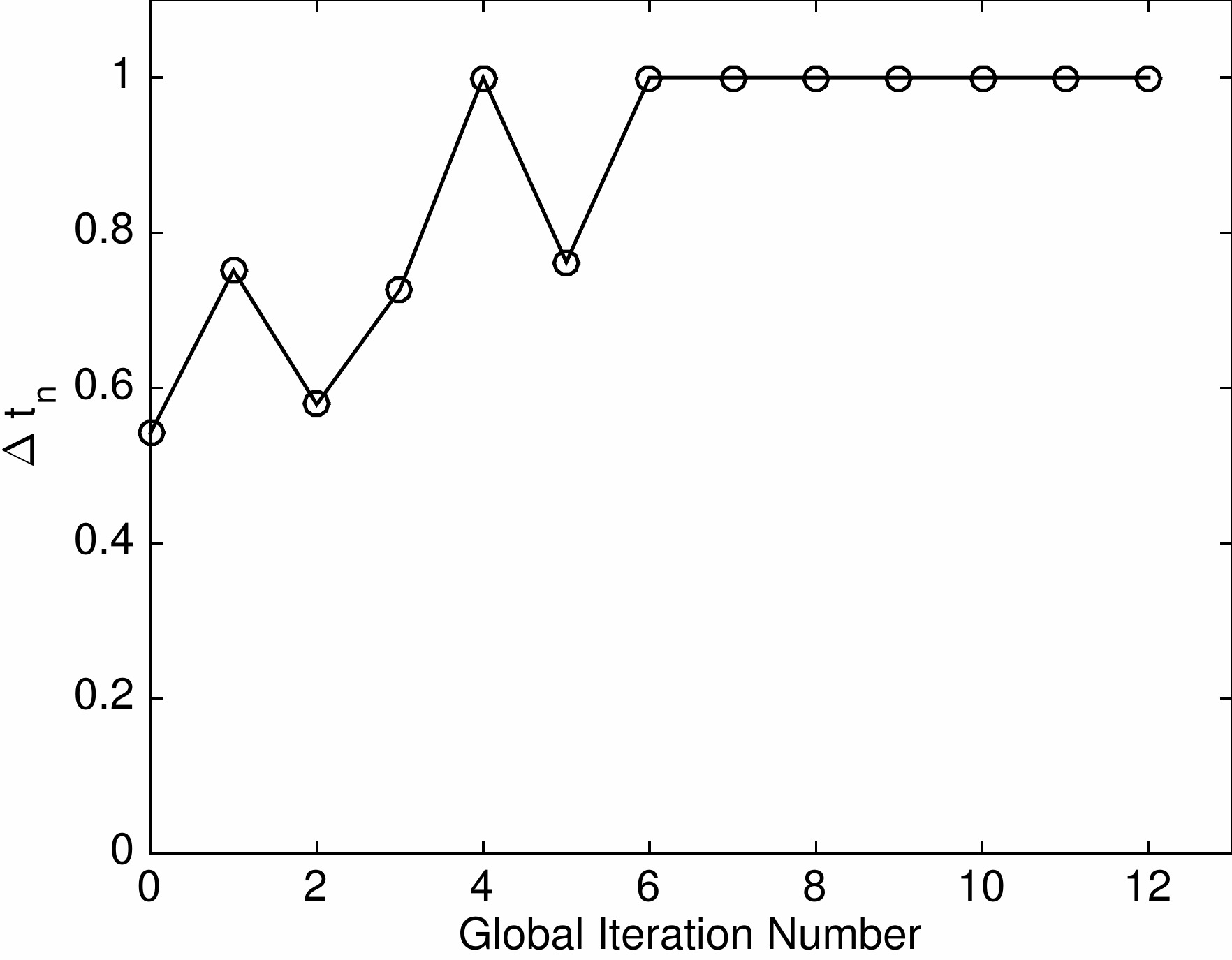} &
\includegraphics[scale=0.38]{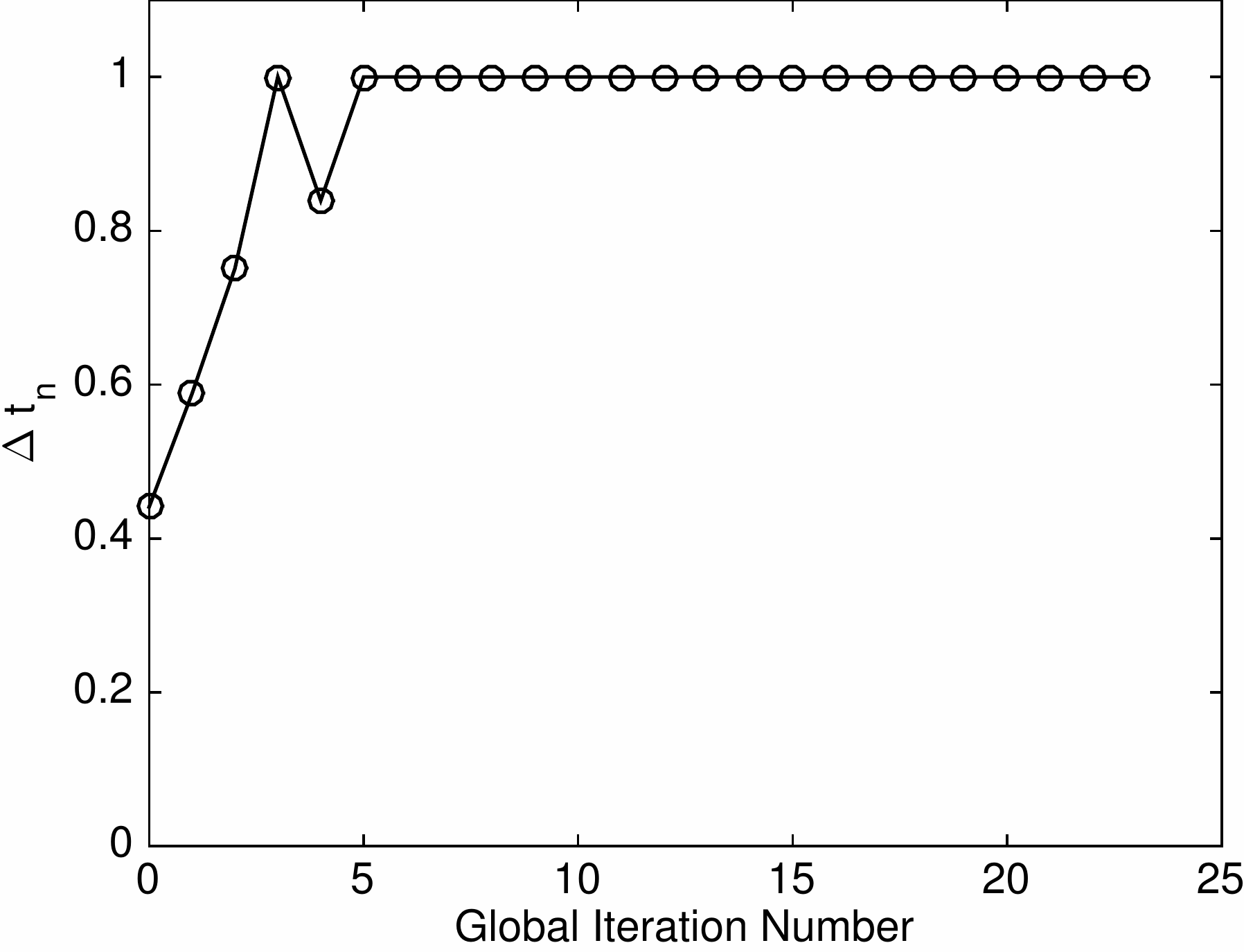} \\
(a) & (b)
\end{tabular}
\end{center}
\caption{Ginzburg-Landau equation. Damping parameter $\Delta t_n$ for $\epsilon=10^{-3}$.
(a) $h$--refinement;
(b) $hp$--refinement.}
\label{fig:ginzburg_deltat_eps_m3}
\end{figure}

\begin{figure}[t!]
	\begin{center}
	\begin{tabular}{cc}
\includegraphics[scale=0.4]{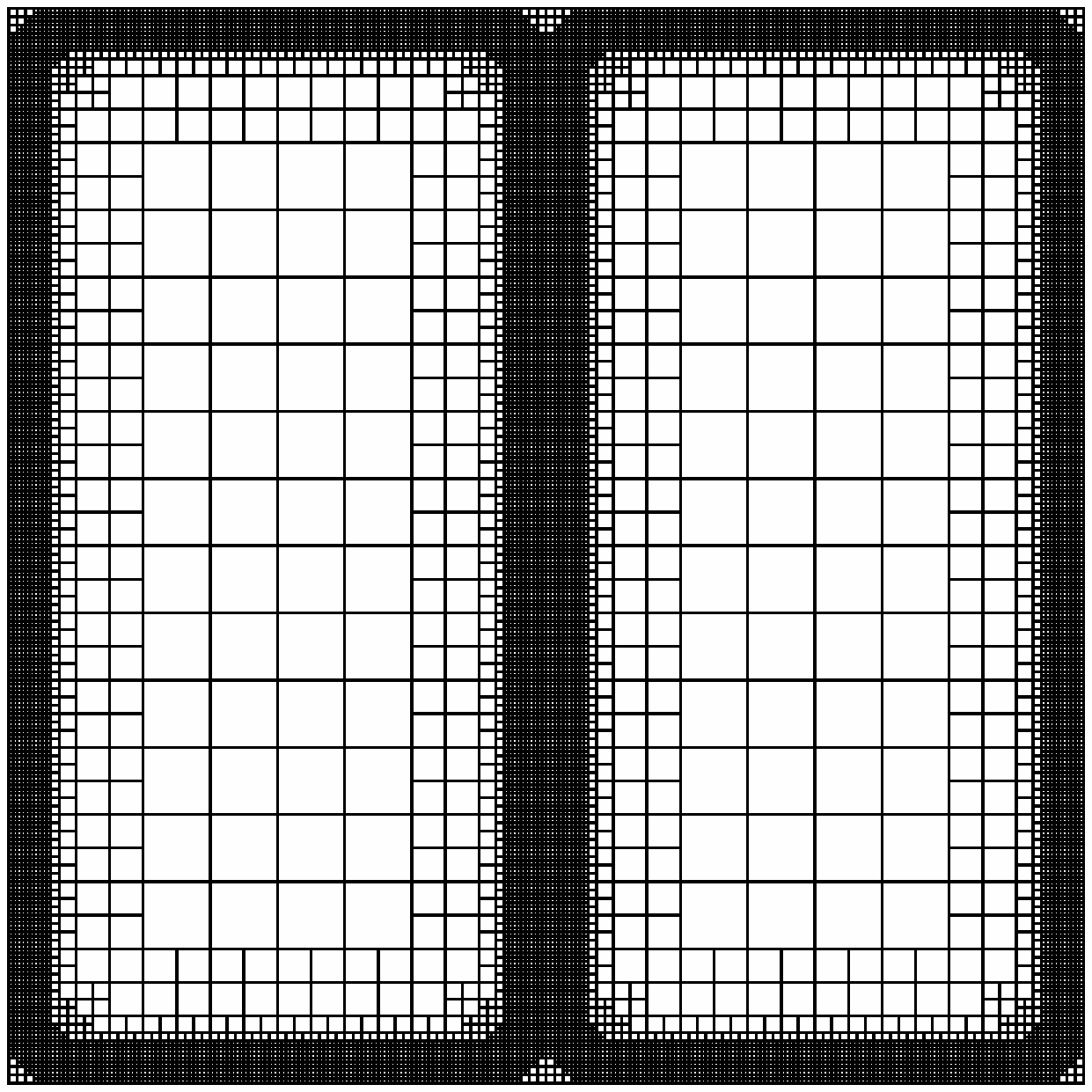} &
\includegraphics[scale=0.39]{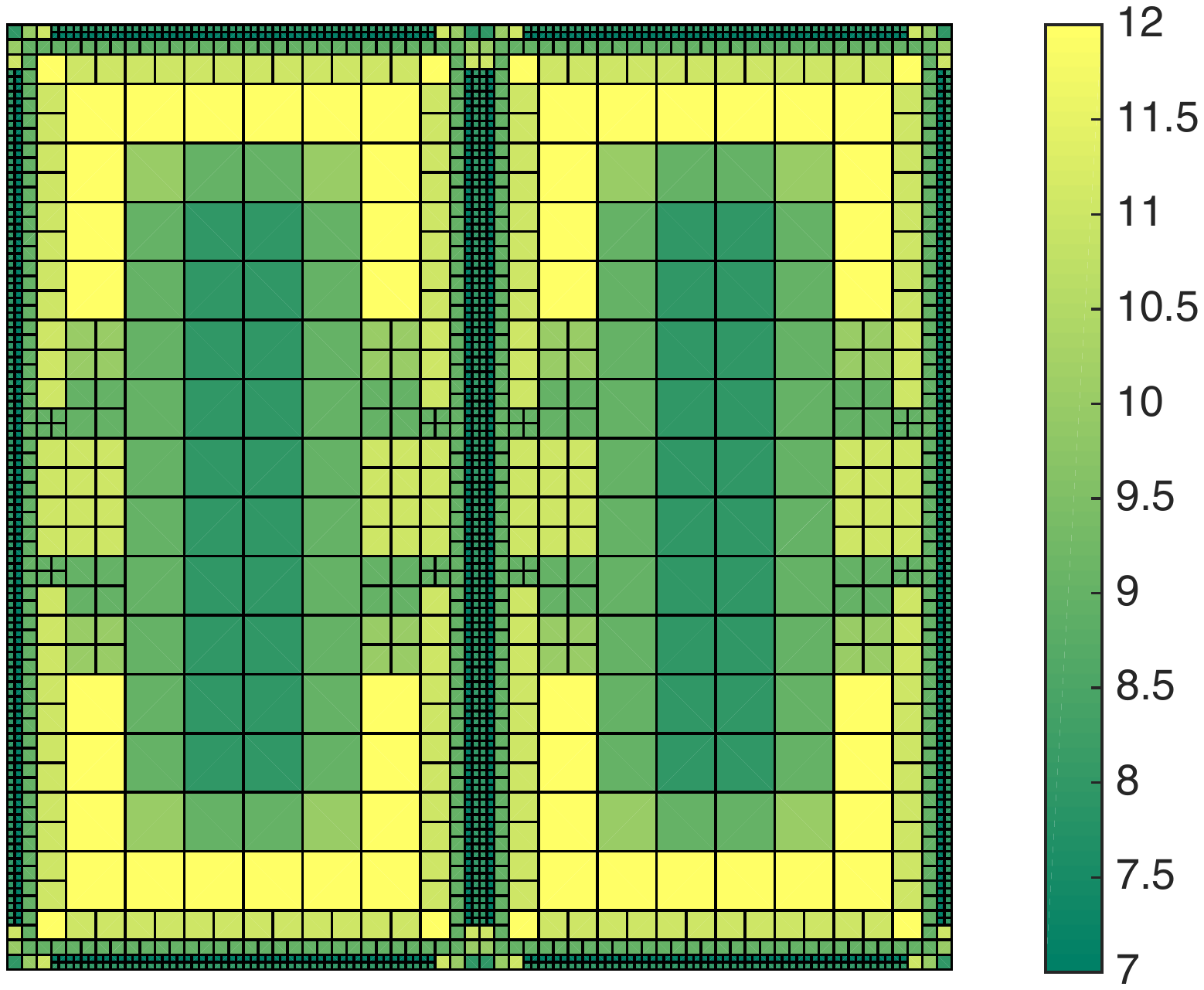} \\
(a) & (b) \\
\includegraphics[scale=0.4]{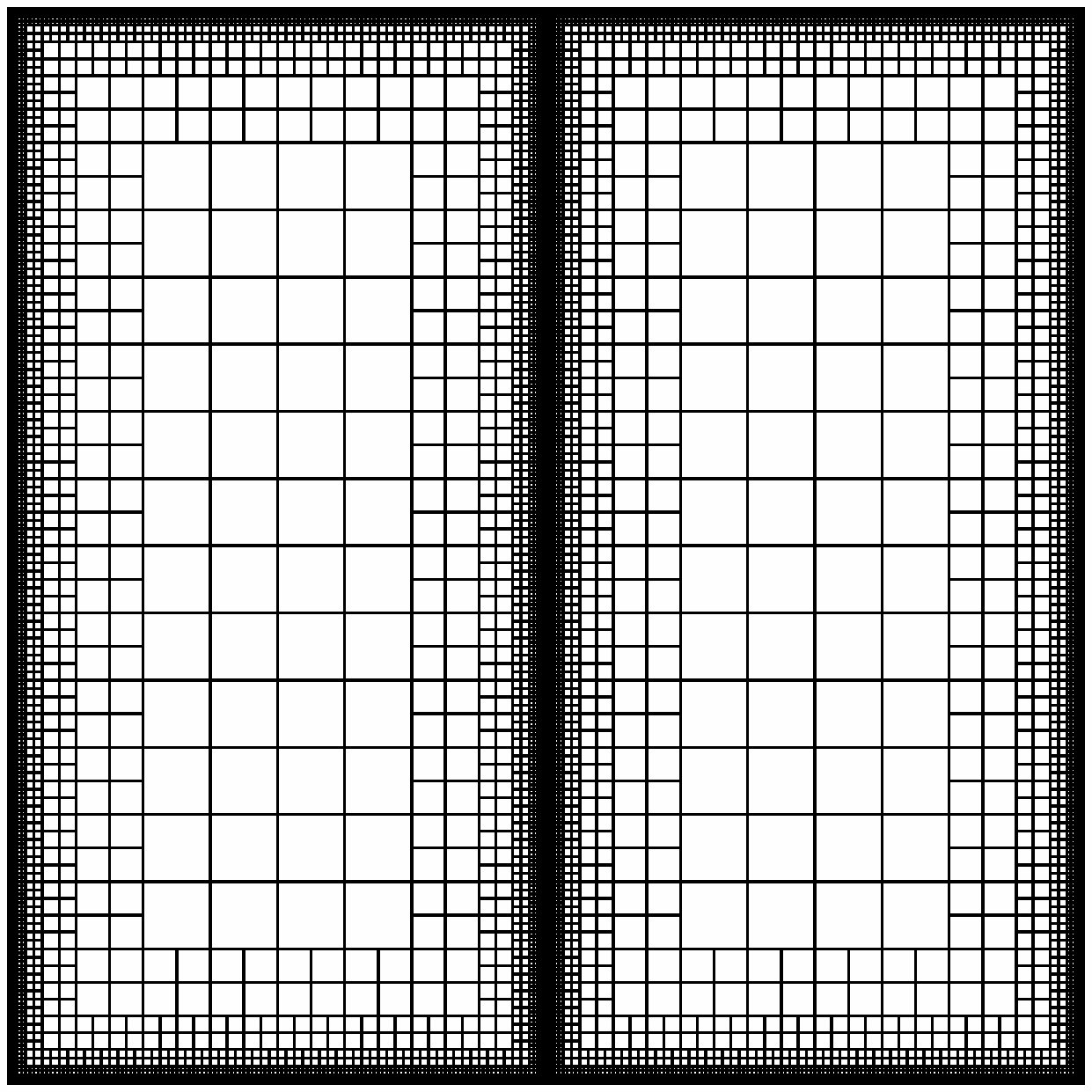} &
\includegraphics[scale=0.39]{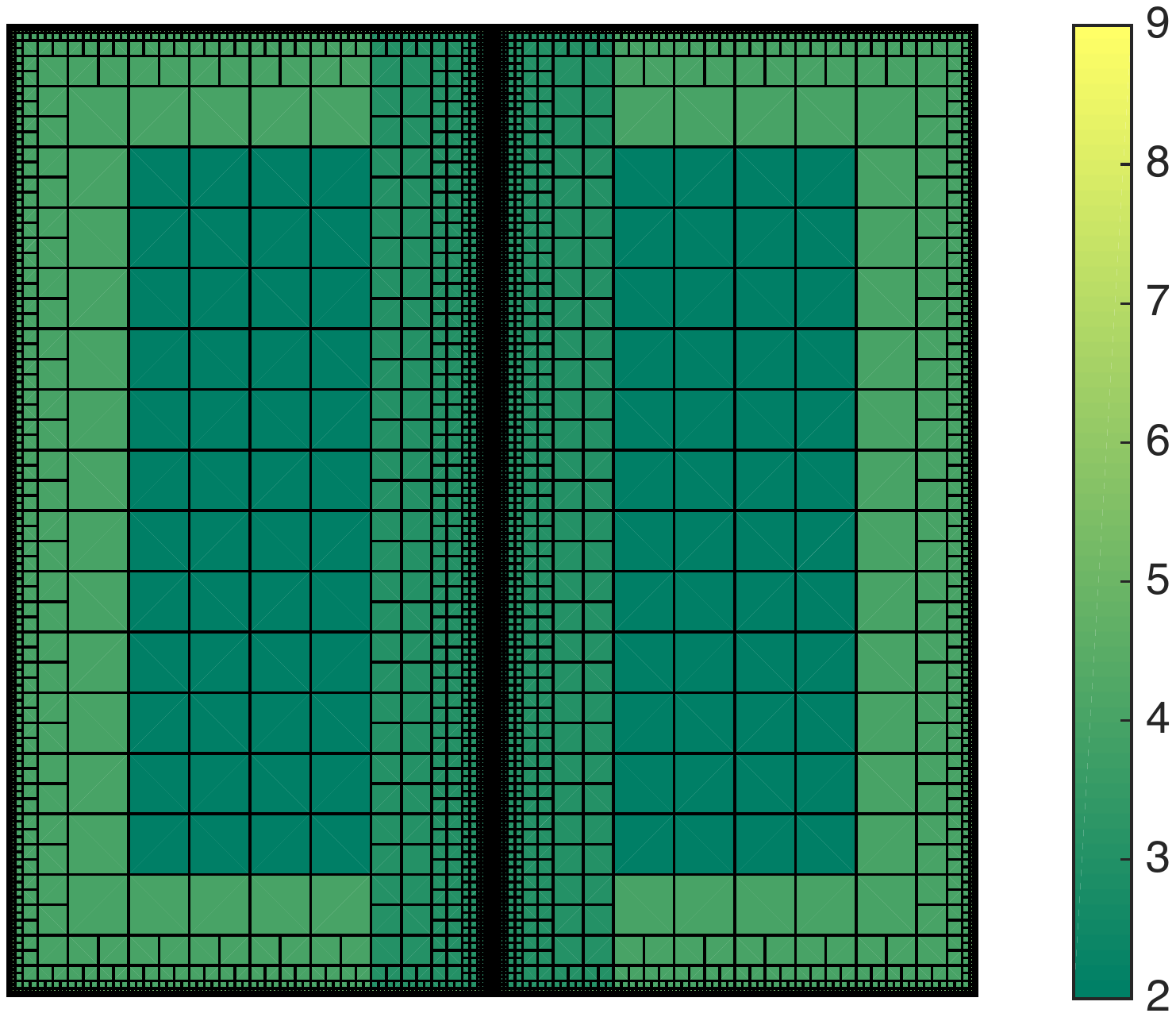} \\
(c) & (d)
\end{tabular}
\end{center}
\caption{Ginzburg-Landau equation. Computational meshes.
Left: $h$--refinement; right: $hp$--refinement.
(a) \& (b) $\epsilon=10^{-3}$; 
(c) \& (d) $\epsilon = 10^{-6}$.}
\label{fig:ginzburg_meshes}
\end{figure}

\end{example}

%%%%%%%%%%%%%%%%%%%%%%%%%%%%%%%%%%%%%%%%%%%%%
%
\section{Concluding remarks}
\label{sec:conlusions}

In this article we have introduced the $hp$--version of the NDG scheme for the
numerical approximation of second-order, singularly perturbed, semilinear 
elliptic boundary value problems. Here, the general approach is based on first
linearising the underlying PDE problem on a continuous level, followed by
subsequent discretisation of the resulting sequence of linear PDEs. For this latter
task, in the current article we have exploited the $hp$--version of the 
interior penalty DG method. Furthermore, we have derived an $\epsilon$-robust
{\em a posteriori} bound which takes into account both the linearisation
and discretisation errors. On the basis of this residual estimate, we have
designed and implemented an $hp$--adaptive refinement algorithm
which automatically controls both of these sources of error; the
practical performance of this strategy has been studied for a series
of numerical test problems. Future work will be devoted to the
extension of this technique to more general nonlinear PDE problems,
as well as to problems in three dimensions.

%%%%%%%%%%%%%%%%%%%%%%%%%%%%%%%%%%%%%%%%%%%%%

\bibliographystyle{amsplain}
\bibliography{myrefs}

\end{document}